\newcommand{\myfootnote}[1]{
\renewcommand{\thefootnote}{}
\footnotetext{\hspace{-16.5pt}\scriptsize#1}
\renewcommand{\thefootnote}{\arabic{footnote}}}
\newtheorem{definition}{Definition}[section]
\newtheorem{lemma}{Lemma}[section]
\newtheorem{theorem}{Theorem}[section]
\newtheorem*{remark*}{Remark}
\newtheorem{proposition}{Proposition}[section]
\newtheorem*{example*}{Example}
\newcommand\numberthis{\addtocounter{equation}{1}\tag{\theequation}}
\def\la{\left\langle}
\def\ra{\right\rangle}
\def\lb{\left(}
\def\rb{\right)}
\def\lcb{\left\{}
\def\rcb{\right\}}
\def\ln{\left\|}
\def\rn{\right\|}
\def\lsb{\left[}
\def\rsb{\right]}
\def\lab{\left|}
\def\rab{\right|}
\def\C{\mathbb{C}}
\def\P{\mathcal{P}}
\def\T{\mathcal{T}}
\def\H{\mathcal{H}}
\def\D{\mathcal{D}}
\def\G{\mathcal{G}}
\def\R{\mathbb{R}}
\def\E{\mathbb{E}}
\def\CS{\mathcal{C}}
\def\I{\mathcal{I}}
\def\BZ{\bm{Z}}
\def\TBZ{{\bm{\tilde{Z}}}}
\def\BL{\bm{L}}
\def\BU{\bm{U}}
\def\BV{\bm{V}}
\def\BB{\bm{B}}
\def\BC{\bm{C}}
\def\BD{\bm{D}}
\def\BE{\bm{E}}
\def\BM{\bm{M}}
\def\BQ{\bm{Q}}
\def\BH{\bm{H}}
\def\BX{\bm{X}}
\def\BA{\bm{A}}
\def\BB{\bm{B}}
\def\BC{\bm{C}}
\def\BD{\bm{D}}
\def\BG{\bm{G}}
\def\by{\bm{X}}
\def\BY{\bm{M}}
\def\BYY{\bm{Y}}
\def\BR{\bm{Q}}
\def\BHH{\bm{H}}
\def\BS{\bm{\Sigma}}
\def\BI{\bm{I}}
\def\BLam{\bm{\Lambda}}
\def\SU{{\scaleto{\bm{U}\mathstrut}{6pt}}}
\def\SV{{\scaleto{\bm{V}\mathstrut}{6pt}}}
\def\SZ{{\scaleto{\bm{Z}\mathstrut}{6pt}}}
\def\TSZ{{\scaleto{\bm{\tilde{Z}}\mathstrut}{6pt}}}
\def\ba{\bm{a}}
\def\bx{\bm{x}}
\def\be{\bm{e}}
\def\bz{\bm{z}}
\def\by{\bm{y}}
\def\bb{\bm{b}}
\def\bc{\bm{c}}
\def\bd{\bm{d}}
\def\bw{\bm{w}}
\def\bone{\bm{1}}
\def\bzero{\bm{0}}
\DeclareMathOperator{\rank}{rank}
\DeclareMathOperator{\diag}{diag}
\DeclareMathOperator{\Real}{{\normalfont Re}}
\DeclareMathOperator{\trace}{trace}
\newcommand{\PC}[1]{\P_{\CS}(#1)}
\newcommand{\mean}[1]{\E\lsb#1\rsb}
\newcommand{\prob}[1]{\mathbb{P}\lcb#1\rcb}
\newcommand{\dist}[2]{{\normalfont\mbox{dist}}(#1,#2)}
\newcommand{\distsq}[2]{{\normalfont\mbox{dist}^2}(#1,#2)}
\newcommand{\kw}[1]{#1}
\renewcommand*{\@fnsymbol}[1]{\ensuremath{\ifcase#1\or \dagger\or \ddagger\or \mathsection\or
    *\or \mathparagraph\or \|\or **\or \dagger\dagger
    \or \ddagger\ddagger \else\@ctrerr\fi}}
\begin{document}

\title{Spectral Compressed Sensing via Projected Gradient Descent}
\author[1]{Jian-Feng Cai}
\author[1, 2]{Tianming Wang}
\author[3]{Ke Wei}
\affil[1]{Department of Mathematics, Hong Kong University of Science and Technology, Clear Water Bay, Kowloon, Hong Kong SAR, China.\vspace{.15cm}}
\affil[2]{Department of Mathematics, University of Iowa, Iowa City, Iowa, USA.\vspace{.15cm}}
\affil[3]{Department of Mathematics, University of California at Davis, Davis, California, USA.}
\myfootnote{\indent\indent Email addresses: jfcai@ust.hk (J.-F. Cai), tianming-wang@uiowa.edu (T. Wang), and weike1986@gmail.com (K. Wei, corresponding author).}

\maketitle

\begin{abstract}
Let $\bx\in\C^n$ be a spectrally sparse signal consisting of $r$  complex sinusoids with or without damping. We consider  the spectral compressed sensing problem, which is about  reconstructing $\bx$ from its partial revealed entries. By utilizing the low rank structure of the Hankel matrix corresponding to $\bx$, we develop a computationally efficient algorithm for this problem. The algorithm starts from an initial guess computed via one-step hard thresholding followed by projection, and then proceeds by applying projected gradient descent iterations to a non-convex functional. Based on the sampling with replacement model, we prove that $O(r^2\log(n))$  observed entries are sufficient for our algorithm to achieve the successful recovery of a spectrally sparse signal. Moreover, extensive empirical performance comparisons show that our  algorithm is competitive  with other state-of-the-art spectral compressed sensing algorithms in terms of phase transitions and overall computational time.

\vspace{.5cm}
\noindent
\textbf{Keywords.} Spectral compressed sensing, low rank Hankel matrix completion, non-convex projected gradient descent. 
\end{abstract}

\section{Introduction}\label{sec:intro}
\subsection{Problem Setup}
In this paper, we are interested in the problem of reconstructing  a spectrally sparse signal  with or without damping from its nonuniform time-domain samples. Let $x(t)$ be a one-dimensional signal. We say that $x(t)$ is spectrally sparse if it is superposition of a few complex sinusoids, namely 
\begin{align*}
x(t) = \sum_{k=1}^rd_ke^{(2\pi \imath f_k-\tau_k)t},\numberthis\label{eq:signal_model}
\end{align*}
where $\imath=\sqrt{-1}$, $r$ is the model order, $f_k$ is the frequency of each sinusoid, $d_k$ is the weight of each sinusoid, and $\tau_k\geq 0$ is a damping factor. Let $n>0$ be a natural number. Without loss of generality, we assume $f_k\in[0,1)$ and consider the samples of $x(t)$ at all the integer values from $0$ to $n-1$, denoted  $\bx$. 
That is,
\begin{align*}
\bx = \begin{bmatrix}
x(0)&\cdots&x(n-1)
\end{bmatrix}^T\in\C^n.\numberthis\label{eq:sampling_model}
\end{align*}

Spectrally sparse signals of the form \eqref{eq:signal_model} and the corresponding sampling model in \eqref{eq:sampling_model} arise in many areas of science and engineering including magnetic resonance imaging \cite{MRI}, fluorescence microscopy \cite{Microscopy}, radar imaging \cite{Radar},  nuclear magnetic resonance spectroscopy \cite{QMCCO:ACIE:15}, and analog-to-digital conversion \cite{Analog}. However, in  those real-world applications, full sampling at all the points on a uniform grid is either time-consuming or technically prohibited. In addition, the signal may become too weak to be detected after a certain period of time when $\tau_k>0$. Therefore, for the purpose of more efficient data acquisition, nonuniform sampling is typically used in practice. When restricted to the sampling model in \eqref{eq:sampling_model}, this means that only partial entries of $\bx$ are known and we need to  estimate the missing ones. Let $\Omega$ be subset of $\{0,\cdots,n-1\}$ corresponding to the observed entries, and let $\P_\Omega$ be the associated sampling operator which acquires only the entries indexed by $\Omega$. Then the task can be formally expressed as:
\begin{align*}
\mbox{Find}\quad\bx\quad\mbox{subject to}\quad\P_\Omega(\bx)=\sum_{a\in\Omega}x_a\be_a,\numberthis\label{eq:setup}
\end{align*}
where $\{\be_a\}_{a=0}^{n-1}$ is a canonical basis of $\C^n$. 
In the sequel, we shall refer to the vector $\bx$ as {\em a spectrally sparse signal}, and refer to the problem of reconstructing a spectrally sparse signal from its partial observed entries as {\em spectral compressed sensing} or {\em spectrally sparse signal recovery}.
\subsection{Prior Art and Main Contributions}
It is clear that \eqref{eq:setup} is a task that cannot be achieved if $\bx$ does not have  any intrinsic simple structures. 
Fortunately, the signal of interest in this paper is spectrally sparse. Moreover,
the number of degrees of freedom in $\bx$ is completely determined by the number of Fourier modes in $x(t)$, which is proportional to $r$ and independent of $n$. This key observation suggests the possibility of reconstructing $\bx$ from its partial revealed entries, which can be further achieved by exploiting  the simplicity
of $\bx$ in different ways.

Note that we are mainly interested in the scenario where $\bx$ only has a few Fourier components (i.e., $r$ is small). Thus, one can utilize the sparsity of $\bx$ in the frequency domain to design reconstruction algorithms. 
In particular, if there is no damping in $\bx$, spectral compressed sensing can be recast as a conventional compressed sensing problem \cite{donoho2006cs, CS} after discretization of the Fourier domain; so many existing algorithms  for compressed sensing are available, such as  Basis Pursuit \cite{CDS98},  
IHT \cite{bludav2009iht, blumensathdavies2010niht,CGIHT, CGIHTnoise, foucart2011htp},  CoSaMP \cite{NeTr_cosamp} and SP \cite{SubspacePursuit}. 
However, the performance of the compressed sensing approach for spectrally sparse signal recovery suffers from the mismatch error between the true frequencies and the discrete frequencies \cite{Mismatch,Strohmer_myad}. A grid-free approach was developed in \cite{Tang} which exploited the frequency sparsity of $\bx$ in a continuous manner via the atomic norm minimization (ANM). It was shown in \cite{Tang} that ANM could achieve exact recovery from $O(r\log(r)\log(n))$ random time-domain samples under some mild conditions.

By the Vandermonde decomposition, one may easily see that the Hankel matrix computed from  a spectrally sparse signal is low rank when $r$ is small relative to $n$.
 Consequently, spectral compressed sensing can be reformulated as a low rank Hankel matrix completion problem\footnote{See Section~\ref{sec:alg:hankel} for details.}. 
Inspired by low rank matrix completion \cite{candesrecht2009mc}, another grid-fee method known as enhanced matrix completion (EMaC) was developed in \cite{Chi}  by reformulating the non-convex low rank Hankel matrix completion problem into a convex Hankel matrix nuclear norm minimization problem. EMaC was shown to be able to reconstruct a spectrally sparse signal with high probability provided the number of observed entries is $O(r\log^4(n))$. The same approach was studied in \cite{CQXY:ACHA:16} under the Gaussian random sampling model, and various first-order methods were discussed in \cite{Hankel_Fazel} for the regularized Hankel matrix nuclear norm minimization problem.
Alternative to EMaC, there have been several non-convex algorithms which were designed to  solve the low rank Hankel matrix completion directly. Examples include PWGD \cite{PWGD}, IHT and FIHT \cite{FIHT}. Compared to the convex approaches such as ANM and EMaC, those non-convex algorithms  are typically much more efficient, especially for higher dimensional problems. Moreover, inspired by the guarantee analysis of Riemannian optimization for low rank matrix reconstruction \cite{Recovery, Completion}, it was shown in \cite{FIHT} that FIHT with a proper initial guess was able to reconstruct a spectrally sparse signal with high probability from $O(r^2\log^2(n))$ random observations. For multi-dimensional spectrally sparse signal recovery problems, we can also exploit the low rank tensor structure of the signal when developing recovery algorithms, see for example \cite{Hankel_Tensor} and references therein.

{\em The main contributions of this work are two-fold.} Firstly,  we present a new non-convex algorithm for spectral compressed sensing via low rank Hankel matrix completion, which we refer to as Projected Gradient Descent (PGD).  Extensive empirical performance comparisons show that PGD is competitive with other state-of-the-art spectral compressed sensing algorithms both in terms of the problem size that can be solved and in terms of overall computation time. Secondly, exact recovery guarantee has been established for PGD, showing that PGD can successfully recover a spectrally sparse signal from $O(r^2\log(n))$ random observed entries.

Although we focus on spectrally sparse signal recovery in this paper, the proposed PGD algorithm can be  easily extended to the general low rank Hankel matrix completion problem. Moreover, the recovery guarantee analysis equally applies provided the underlying target matrix is incoherent\footnote{See Definition~\ref{def:coh}.}. Low-rank Toeplitz matrices can also be provably recovered from  partial revealed entries by a slightly modified version of PGD.
\subsection{Outline and Notation}
The remainder of this paper is organized as follows. We present the details of PGD, along with its recovery guarantee in Section~\ref{sec:alg}. In Section~\ref{sec:numerics} we evaluate the empirical performance of PGD with a set of numerical experiments.  The proof of the exact recovery guarantee  is presented in Section~\ref{sec:pf}. We conclude the paper with some potential future directions in Section~\ref{sec:discuss}.

Throughout the paper we use the following notational conventions.  We denote vectors by bold lowercase letters and matrices by bold uppercase letters, and the numbering of vector and matrix elements starts at zero. In particular, we fix $\bx$, $\by$, and $\BY$ as the target signal and its transformations. The individual entries of vectors and matrices are denoted in normal font. We denote by $\ln\BZ\rn_*$, $\ln\BZ\rn_2$ and $\ln\BZ\rn_F$ the nuclear norm, spectral norm and Frobenius norm of the matrix $\BZ$, respectively. Additionally, we define $\ln\BZ\rn_{2,\infty}$ as the largest $\ell_2$-norm of its rows. For a vector $\bz$, we use $\ln\bz\rn_1$ and $\ln\bz\rn_2$ to denote its $\ell_1$-norm and $\ell_2$-norm, respectively.  For
both vectors and matrices, $\bz^T$
and $\BZ^T$ denote their transpose while $\bz^*$ and $\BZ^*$ denote their conjugate transpose. The inner product of two matrices $\BZ_1$ and $\BZ_2$ is defined as $\la\BZ_1,\BZ_2\ra=\trace\lb\BZ_1^*\BZ_2\rb$. When restricted to two vectors $\bz_1$ and $\bz_2$, the inner product is given by $\la\bz_1,\bz_2\ra=\bz_1^*\bz_2$.  For a natural number $n$,  $[n]$ denotes the set $\{0,\cdots,n-1\}$.


Operators are denoted by calligraphic letters. In particular,  $\I$ denotes the identity operator and $\H$ denotes the linear  operator which maps $n$-dimensional vectors to  $n_1\times n_2$ Hankel matrices with $n_1+n_2=n+1$, i.e., for any vector $\bz\in\C^{n}$, $[\H\bz]^{(i,j)} = z_{i+j}$ for $i\in[n_1]\mbox{ and } j\in[n_2]$.  The ratio $c_s$ is defined as $c_s=\max\{n/n_1,n/n_2\}$.
We denote  the adjoint of $\H$ by $\H^*$, which is a linear operator from $n_1\times n_2$ matrices to $n$-dimensional vectors. For any matrix $\BZ\in\C^{n_1\times n_2}$, a simple calculation yields that $\H^*\BZ =\lcb\sum_{i+j=a}Z^{(i,j)}\rcb_{a=0}^{n-1} $.  Define $\D^2=\H^*\H$. It is easily verified that $\D$ is a linear operator from vectors to vectors which scales each entry of an $n$-dimensional vector by $\sqrt{w_a}$, where $w_a$  is the number of elements in the $a$-th skew-diagonal of an $n_1\times n_2$ matrix. Define $\G=\H\D^{-1}$ and let $\G^*$ be the adjoint of $\G$. One can easily see that the following orthogonal property holds: $\G^*\G=\I$. Finally, we use $c$, $c_1$, $c_2$, $\cdots$ to denote positive absolute numerical constants whose values may change from place to place.

\section{Algorithm and Main Result}\label{sec:alg}
\subsection{Expoiting Low Rank Structure}\label{sec:alg:hankel}
As noted in the introduction, it is impossible to recover   a signal from its partial known entries if there are no hidden  simple structures. For a spectrally sparse signal, we can exploit its simplicity via the low rank structure of the corresponding Hankel matrix. Recall that a Hankel matrix is a matrix in which each skew-diagonal from left to right is constant. 
We define $\H$ as a linear operator which maps  a vector $\bz\in\C^n$ to an $n_1\times n_2$ ($n_1+n_2-1=n$) Hankel matrix, denoted $\H\bz$,  whose $i$-th skew-diagonal is equal to the $i$-th entry of $\bz$,
\begin{align*}
\H\bz =\begin{bmatrix}
z_0 & z_1 & z_2 & \cdots & \cdots& z_{n_2-1}\\
z_1 & z_2 &\cdots& \cdots & \cdots &  z_{n_2}\\
z_2 & \cdots& \cdots & \cdots & \cdots & z_{n_2+1}\\
\vdots & \vdots & \vdots & \vdots & \vdots &\vdots\\
z_{n_1-1} & z_{n_1} & \cdots & \cdots &  \cdots  &z_{n-1}
\end{bmatrix}.
\end{align*}
Thus, one has 
$
[\H\bz]^{(i,j)} = z_{i+j}$ for $ i\in[n_1]\mbox{ and } j\in[n_2].
$
In particular, the $(i,j)$-th entry of the Hankel matrix formed from the spectrally sparse signal $\bx$ is given by 
\begin{align*}
[\H\bx]^{(i,j)} = x_{i+j} = \sum_{k=1}^rd_ke^{(2\pi \imath f_k-\tau_k)(i+j)} = \sum_{k=1}^rd_ke^{i(2\pi \imath f_k-\tau_k)} e^{j(2\pi \imath f_k-\tau_k)}.
\end{align*}
If we let $w_k=e^{(2\pi\imath f_k-\tau_k)}$ for $k=1,\cdots,r$, it follows immediately that $\H\bx$ admits the following Vandermonde decomposition:
$$
\mathcal{H}\bm{x}=\bm{E}_L\bm{D}\bm{E}_R^T,
$$
where
$$
\bm{E}_L=
\left[\begin{array}{cccc}
1         & 1         & \cdots & 1 \\
w_1       & w_2       & \cdots & w_r \\
\vdots    & \vdots    & \vdots & \vdots \\
w_1^{n_1-1} & w_2^{n_1-1} & \cdots & w_r^{n_1-1} \\
\end{array}\right],~
\bm{E}_R=
\left[\begin{array}{cccc}
1         & 1         & \cdots & 1 \\
w_1       & w_2       & \cdots & w_r \\
\vdots    & \vdots    & \vdots & \vdots \\
w_1^{n_2-1} & w_2^{n_2-1} & \cdots & w_r^{n_2-1} \\
\end{array}\right]
$$
and
$\bm{D}=\diag(d_1,\cdots,d_r)$. Moreover, one has $\rank(\H\bx)=r$ provided the frequencies $\{f_k\}_{k=1}^r$ are different with each other and the diagonal entries of $\BD$ are all nonzeros.  

Obviously, each observed entry of $\bx$ corresponds to a revealed skew-diagonal of $\H\bx$. With a slight abuse of notation, denote by $\Omega$ the subset of the revealed skew-diagonals of $\H\bx$. Given a vector $\bz\in\C^n$, a simple calculation shows 
\begin{align*}
\la\P_\Omega(\H\bz-\H\bx),\H\bz-\H\bx\ra&=\sum_{a\in\Omega}\sum_{i+j=a}\lb[\H\bz]^{(i,j)}-[\H\bx]^{(i,j)}\rb^2\\
&=\sum_{a\in\Omega}w_a(z_a-x_a)^2\\
&=\la \P_\Omega\lb\D(\bz-\bx)\rb,\D(\bz-\bx)\ra,
\end{align*}
where $w_a$ in the second line is the number of entries in the $a$-th skew-diagonal of an $n_1\times n_2$ matrix, and $\D$ in the last line is a linear map which scales the $a$-th entry of a vector by a factor of $\sqrt{w_a}$ for all $a=0,\cdots,n-1$. 
We have seen that $\H\bx$ is a rank $r$ matrix. Thus, to reconstruct $\bx$, we may seek a signal $\bz$ such that $\rank(\H\bz)=r$ and $\H\bz$ fits the revealed skew-diagonals of $\H\bx$ as well as possible  by solving a rank constraint {\em weighted least square} problem:
\begin{align*}
\min_{\bz\in\C^n}\la \P_\Omega\lb\D(\bz-\bx)\rb,\D(\bz-\bx)\ra\quad\mbox{subject to}\quad\rank(\H\bz)=r.\numberthis\label{eq:low_rank_H}
\end{align*}

For ease of exposition, we will make a change of variables and rewrite \eqref{eq:low_rank_H} using  the new variable $\by=\D\bx$.   Denote by $\H^*$ the adjoint of $\H$, which maps a matrix $\BZ\in\C^{n_1\times n_2}$ to a vector $\H^*\BZ=\lcb\sum_{i+j=a}Z^{(i,j)}\rcb_{a=0}^{n-1}$. It is easy to show that $\H^*\H=\D^2$. Letting $\G=\H\D^{-1}$, we find that $\G$ has the desirable orthogonal property $\G^*\G=\I$, where $\I$ denotes the identity operator. After the substitution of $\D\bx$ by $\by$ and the substitution of $\D\bz$ by $\bz$, we can rewrite \eqref{eq:low_rank_H} as
\begin{align*}
\min_{\bz\in\C^n}\la \P_\Omega\lb\bz-\by\rb,\bz-\by\ra\quad\mbox{subject to}\quad\rank(\G\bz)=r,\numberthis\label{eq:low_rank_G}
\end{align*}
which will be our primary focus in this paper. A more direct interpretation of \eqref{eq:low_rank_G} is as follows.  Since $\by=\D\bx$, $\P_\Omega(\by) = \P_\Omega(\D\bx)=\D\P_\Omega(\bx)$,  $\rank(\G\by)=\rank(\H\bx)=r$, and $\D$ is invertible, one can instead attempt to reconstruct $\by$ from $\P_\Omega(\by)$  by seeking a signal that corresponds to  a low rank Hankel matrix and fits the observations as well as possible.
\subsection{Algorithm: Projected Gradient Descent}\label{sec:alg_pgd}
\subsubsection{Which Objective Function?} 
In order to eliminate the rank constraint in \eqref{eq:low_rank_G}, we parameterize $\G\bz$ by a product of two rank $r$ matrices and write $\G\bz$ as $\G\bz=\BZ_{\SU}\BZ_{\SV}^*$, where $\BZ_\SU\in\C^{n_1\times r}$ and $\BZ_\SV\in\C^{n_2\times r}$. \kw{We note that $\BZ_{\SU}\BZ_{\SV}^*$ is a Hankel matrix if and only if 
\begin{align*}
(\I-\G\G^*)(\BZ_{\SU}\BZ_{\SV}^*) = \bzero.
\end{align*}}Thus, by further noting that $\bz=\G^*(\G\bz) = \G^*(\BZ_{\SU}\BZ_{\SV}^*)$, we can rewrite \eqref{eq:low_rank_G}  using $\BZ_{\SU}$ and $\BZ_{\SV}$ as 
\begin{align*}
\min_{\BZ_\SU,\BZ_\SV}\la\P_\Omega\lb\G^*\lb\BZ_{\SU}\BZ_{\SV}^*\rb-\by\rb, \G^*\lb\BZ_{\SU}\BZ_{\SV}^*\rb-\by\ra
\quad\mbox{subject to}\quad(\I-\G\G^*)(\BZ_{\SU}\BZ_{\SV}^*) = \bzero,\numberthis\label{eq:constrained}\end{align*}
which is an equality constraint minimization problem.  Alternatively, \eqref{eq:constrained} can be interpreted as follows: we estimate  the rank $r$ matrix $\G\by$ by a Hankel matrix of the form $\BZ_{\SU}\BZ_{\SV}^*$ that minimizes the mismatch in the measurement domain. Once $\G\by$ is reconstructed, one can recover $\by$ via $\by=\G^*(\G\by)$.

Putting the constraint and the objective function in \eqref{eq:constrained} together allows us to  consider an optimization problem without the equality constraint  by minimizing
\begin{align*}
f(\BZ) = \ln(\I-\G\G^*)(\BZ_{\SU}\BZ_{\SV}^*)\rn_F^2+p^{-1}\la\P_\Omega(\G^*(\BZ_{\SU}\BZ_{\SV}^*)-\by),\G^*(\BZ_{\SU}\BZ_{\SV}^*)-\by\ra,
\end{align*}
where $$\BZ=\begin{bmatrix}\BZ_{\SU}\\\BZ_{\SV}\end{bmatrix}\in\C^{(n+1)\times r}$$ denotes the concatenation of $\BZ_{\SU}$ and $\BZ_{\SV}$, and the weight $p=m/n$ is  the sampling ratio.
Let $\G\by=\BU\BS\BV^*$ be the reduced singular value decomposition (SVD) of $\G\by$. Define \begin{align*}\BY=\begin{bmatrix}\BY_{\SU}\\\BY_{\SV}\end{bmatrix}\in\C^{(n+1)\times r},
\numberthis\label{eq:Y}
\end{align*} where $\BY_\SU=\BU\BS^{1/2}$ and $\BY_\SV=\BV\BS^{1/2}$. It is easily shown that $f(\BZ)=0$ and thus achieves its minimum for the set of matrices
\begin{align*}
\lcb \begin{bmatrix}\BY_{\SU}\BX\\\BY_{\SV}(\BX^{-1})^*\end{bmatrix},~\BX\in\C^{r\times r} \mbox{ is invertible}\rcb.\numberthis\label{eq:sol_set1}
\end{align*}
Note that \eqref{eq:sol_set1} is also a set of solutions for the equality constrained problem \eqref{eq:constrained}. Among this set of solutions, there are ones which are highly unbalanced, i.e., these having $\ln\BZ_\SU\rn_F\rightarrow 0$ and $\ln\BZ_\SV\rn_F\rightarrow \infty$, or vice versa. For example, let $\BZ_\SU=\alpha \BY_{\SU}$ and $\BZ_\SV=\alpha^{-1} \BY_{\SV}$ for $\alpha$ being a real number that approaches either zero or infinity. Those solutions are unfavorable for the purpose of both computation and analysis. In order to reduce the solution space and avoid the occurrence of the pathological solutions, we add the regularizer function 
\begin{align*}
g(\BZ) = \frac{1}{2}\ln\BZ_{\SU}^*\BZ_{\SU}-\BZ_{\SV}^*\BZ_{\SV}\rn_F^2
\end{align*}
to $f(\BZ)$ and instead consider the minimization problem with respect to
\begin{align*}
F(\BZ) = f(\BZ)+\lambda\cdot g(\BZ),\numberthis\label{eq:obj_F}
\end{align*}
where $\lambda>0$ is  to be determined. Here, $g(\BZ)$ in some sense penalizes the mismatch between the sizes of $\BZ_\SU$ and $\BZ_\SV$, and it was also used in  rectangular low rank  matrix recovery, see \cite{PF, FGD}.

Now, the set of solutions that minimizes $F(\BZ)$ or at which $F(\BZ)=0$ is given by 
\begin{align*}
\mathcal{S}=\lcb \begin{bmatrix}\BY_{\SU}\BR\\\BY_{\SV}\BR\end{bmatrix},~\BR\in\C^{r\times r} \mbox{ is unitary}\rcb.\numberthis\label{eq:sol_set2}
\end{align*}
The distance of a matrix $\BZ\in\C^{(n+1)\times r}$ to the solution set, denoted $\dist{\BZ}{\BY}$, is defined as
\begin{align*}
\dist{\BZ}{\BY}=\min_{\substack{\BR\BR^*=\BR^*\BR=\BI}}\ln\BZ-\BY\BR\rn_F.
\end{align*}
Let $\BY^*\BZ=\BQ_1\BLam\BQ_2^*$ be the SVD of $\BY^*\BZ$. 
 By the Von Neumann's trace inequality \cite{von_trace}, the above minimum is achieved at the unitary matrix $\BR_\SZ$ given by
\begin{align*}\BR_{\SZ}=\BQ_1\BQ_2^*.\numberthis\label{eq:R_form}
\end{align*}
\subsubsection{Which Feasible Set?} As we have already seen, the goal in spectrally sparse signal recovery is in fact to reconstruct a low rank Hankel matrix matrix $\G\by$ from its partial revealed skew-diagonals. In general, it is impossible  to reconstruct  a low rank matrix from entry-wise sampling unless its singular vectors are
weakly correlated with the sampling basis. Here, we are interested in $\mu_0$-incoherent matrix which was first introduced in \cite{candesrecht2009mc} for low rank matrix completion.
\begin{definition}\label{def:coh}
With $\G\by=\BU\BS\BV^*$ being the SVD of $\G\by$, we say $\G\by$ is $\mu_0$-incoherent   if there exists an absolute numerical constant $\mu_0>0$ such that 
\begin{align*}
\ln\BU\rn_{2,\infty} \leq \sqrt{\frac{\mu_0c_sr}{n}}\quad\mbox{and}\quad\ln\BV\rn_{2,\infty} \leq \sqrt{\frac{\mu_0c_sr}{n}},
\end{align*}
where $c_s=\max\{n/n_1,n/n_2\}$.
\end{definition}
A sufficient condition for $\G\by$ to be $\mu_0$-incoherent can be derived  based on the Vandermonde decomposition of $\G\by$. Assume that
\begin{align*}
\sigma_{\min}(\bm{E}_L^*\bm{E}_L)\geq \frac{n_1}{\mu_0},
\quad
\sigma_{\min}(\bm{E}_R^*\bm{E}_R)\geq \frac{n_2}{\mu_0}.\numberthis\label{eq:lower_l2}
\end{align*}
Then we have 
$$
\ln\BU^{(i,:)}\rn^2_2=\ln \be_i^*\BE_L(\BE_L^*\BE_L)^{-1/2}\rn_2^2\leq \ln \be_i^*\BE_L\rn_2^2\ln (\BE_L^*\BE_L)^{-1}\rn_2\leq\frac{\mu_0r}{n_1}\leq \frac{\mu_0c_sr}{n}
$$
and 
$$
\ln\BV^{(i,:)}\rn^2_2=\ln \be_i^*\BE_R(\BE_R^*\BE_R)^{-1/2}\rn_2^2\leq \ln \be_i^*\BE_R\rn_2^2\ln (\BE_R^*\BE_R)^{-1}\rn_2\leq\frac{\mu_0r}{n_2}\leq \frac{\mu_0c_sr}{n},
$$
which implies $\G\by$ is $\mu_0$-incoherent. Moreover, \cite[Thm.~2]{MUSIC} says that \eqref{eq:lower_l2} holds for undamping signals provided  the minimum wrap-around distance between each pair of the frequencies of the spectrally sparse signal is greater than about $2/n$.

Let $\mu$ and $\sigma$ be two numerical constants such that $\mu\geq \mu_0$ and $\sigma\geq \sigma_1(\G\by)$. When $\G\by$ is $\mu_0$-incoherent, the matrix $\BY$ constructed in \eqref{eq:Y} satisfies $\ln\BY\rn_{2,\infty}\leq \sqrt{\mu c_sr\sigma/n}$. Moreover, letting $\CS$ be a convex set defined as 
\begin{align*}
\CS = \lcb\BZ\in\C^{(n+1)\times r}~|~\ln\BZ\rn_{2,\infty}\leq \sqrt{\frac{\mu c_sr\sigma}{n}} \rcb,\numberthis\label{eq:set_C}
\end{align*}
it is evident that $\mathcal{S}\subset\CS$. Therefore,   we can restrict our search on the feasible set $\CS$ when computing the minimum or zero value of $F(\BZ)$.
\subsubsection{Algorithm}
The discussion above tells us that we can reconstruct the low rank factors $\BY_{\SU}$  and $\BY_{\SV}$ of the ground truth matrix $\G\by$ by minimizing the function $F(\BZ)$ on the feasible set $\CS$, namely
\begin{align*}
\min_{\BZ\in\CS} F(\BZ),\numberthis\label{eq:rec_meth}
\end{align*}
where $F(\BZ)$ is defined in \eqref{eq:obj_F} and $\CS$ is defined in \eqref{eq:set_C}.
We present a simple projected gradient descent algorithm for this problem, see Algorithm~\ref{alg:pgd}.
\begin{algorithm}[htp]
\caption{Projected Gradient Descent (PGD)}
\label{alg:pgd}
\begin{algorithmic} 
\Statex \textbf{Initialization:} $\bm{L}^{0}=p^{-1}\T_r(\G\P_\Omega(\by))=\BU^0\BS^0(\BV^0)^*$, $\TBZ^0=\begin{bmatrix}\BU^0(\BS^0)^{1/2}\\\BV^0(\BS^0)^{1/2}\end{bmatrix}$ and $\BZ^0=\PC{\TBZ^0}$.
\For{$k=0,1,\cdots$}\\
\quad1. ${\TBZ}^{k+1}=\BZ^k-\eta \nabla F(\BZ^k)$\\
\quad2. $\BZ^{k+1}=\PC{\TBZ^{k+1}}$
\EndFor
\Statex \textbf{Output:} $\BZ^k$ in the last iteration, $\by^k = \G^*(\BZ^k_\SU(\BZ^k_\SV)^*)$ and $\bx^k = \D^{-1}\by^k$.
\end{algorithmic}
\end{algorithm}
The algorithm consists of two phases: Initialization and gradient descent with a constant stepsize. The initial guess is computed via one-step hard thresholding, followed by projection onto the convex set $\CS$. The hard thresholding operator $\T_r(\cdot)$ returns the best rank $r$ approximation of a matrix, which can be computed via the partial SVD. Given a matrix $\BZ\in\C^{(n+1)\times r}$, the projection $\P_{\CS}(\BZ)$ can be computed by row-wise trimming,
\begin{align*}
[\P_{\CS}(\BZ)]^{(i,:)}=\begin{cases}\BZ^{(i,:)} & \mbox{if }\ln\BZ^{(i,:)}\rn_2\leq \sqrt{\frac{\mu c_sr\sigma}{n}},\\
\frac{\BZ^{(i,:)}}{\ln\BZ^{(i,:)}\rn_2}\sqrt{\frac{\mu c_sr\sigma}{n}}&\mbox{otherwise}.
\end{cases}
\end{align*}
In each iteration of the algorithm, the current estimate $\BZ^k$ is updated along the negative gradient descent direction $-\nabla F(\BZ^k)$, using a stepsize $\eta$, followed by projection onto the convex set $\CS$. Since we are working
with complex matrices, the gradient $F(\BZ)$ of a matrix $\BZ$ is calculated under the Wirtinger calculus, given by 
\begin{align*}
\nabla F(\BZ) =\begin{bmatrix}
\nabla F_{\SU}(\BZ)\\
\nabla F_{\SV}(\BZ)
\end{bmatrix}=
\begin{bmatrix}
\nabla f_{\SU}(\BZ)+\lambda\cdot\nabla g_{\SU}(\BZ) \\
\nabla f_{\SV}(\BZ)+\lambda\cdot\nabla g_{\SV}(\BZ)
\end{bmatrix},
\end{align*}
where
\begin{align*}
&\nabla f_{\SU}(\BZ) = \lb(\I-\G\G^*)(\BZ_{\SU}\BZ_{\SV}^*)\rb\BZ_{\SV} +p^{-1}\lb\G\P_\Omega(\G^*(\BZ_{\SU}\BZ_{\SV}^*)-\by)\rb\BZ_{\SV},\\
&\nabla f_{\SV}(\BZ) = \lb(\I-\G\G^*)(\BZ_{\SU}\BZ_{\SV}^*)\rb^*\BZ_{\SU} +p^{-1}\lb\G\P_\Omega(\G^*(\BZ_{\SU}\BZ_{\SV}^*)-\by)\rb^*\BZ_{\SU},\\
&\nabla g_{\SU}(\BZ) = \BZ_{\SU}(\BZ_{\SU}^*\BZ_{\SU}-\BZ_{\SV}^*\BZ_{\SV}),\\
&\nabla g_{{\SV}}(\BZ) = \BZ_{\SV}(\BZ_{\SV}^*\BZ_{\SV}-\BZ_{\SU}^*\BZ_{\SU}).
\end{align*}

PGD can be implemented very efficiently and the main computational cost per iteration is $O(r^2n+rn\log(n))$ flops, which lies in the computation of $\nabla F(\BZ)$ in each iteration. Taking the computation of $\nabla F_{\SU}(\BZ)$ as an example, we note that 
\begin{align*}
\nabla F_{\SU}(\BZ)=\G\lb p^{-1}\P_\Omega(\G^*(\BZ_{\SU}\BZ_{\SV}^*)-\by)-\G^*(\BZ_{\SU}\BZ_{\SV}^*)\rb\BZ_{\SV} + \BZ_{\SU}\lb\lambda\BZ_{\SU}^*\BZ_{\SU}+(1-\lambda)\BZ_{\SV}^*\BZ_{\SV}\rb.
\end{align*}
Clearly,  the second term can be computed using $O(r^2n)$ flops. Let $\bw =  p^{-1}\P_\Omega(\G^*(\BZ_{\SU}\BZ_{\SV}^*)-\by)-\G^*(\BZ_{\SU}\BZ_{\SV}^*)$. Since we can compute $\G^*(\BZ_{\SU}\BZ_{\SV}^*)$ by $r$ fast convolutions, $\bw$ can be obtained using $O(rn\log(n))$ flops. Moreover, $(\G\bw)\BZ_{\SV}$ can be computed via $r$ fast Hankel matrix-vector multiplications that also cost $O(rn\log(n))$ flops.

Before proceeding, it is worth noting that non-convex (projected) gradient decent methods have received intensive investigations for other low rank matrix recovery problems, such as unstructured low rank matrix recovery and matrix completion \cite{PF,FGD,FGD2}, phase retrieval \cite{candes2015phase,candes2017phase}, robust principle component analysis \cite{yudong1,yudong2}, and blind deconvolution \cite{sykw}.  In those papers, lower bounds on the sampling complexity have been established under different random measurement models, showing that the number of  measurements needed for the successful recovery of the target matrices is essentially determined by the number of  degrees of freedom in the matrices. In particular, 
a projected gradient descent algorithm was studied in \cite{FGD} for unstructured rectangular  low rank matrix completion. 
The convergence analysis of PGD in this paper is directly inspired by \cite{FGD}, though the technical details are substantially different.
\subsection{Main Result}
Let $\Omega=\{a_k~|~k=1,\cdots,m\}$. We consider the sampling with replacement model in this paper, where each index 
$a_k$ is drawn independently and uniformly from $\{0,\cdots,n-1\}$. Under this sampling model, for a vector $\bz\in\C^n$, the projection $\P_\Omega(\bz)$ is given by
\begin{align*}
\P_\Omega(\bz) =\sum_{k=1}^mz_{a_k}\be_{a_k},\numberthis\label{eq:tmp110}
\end{align*}
and for two vectors $\bz,~\bw\in\C^n$, the inner product $\la\P_\Omega(\bz),\bw\ra$ is given by
\begin{align*}
\la\P_\Omega(\bz),\bw\ra = \sum_{k=1}^m\bar{z}_{a_k}w_{a_k}\numberthis\label{eq:tmp112}.
\end{align*}
In the guarantee analysis of PGD, we assume $\mu$ and $\sigma$ in \eqref{eq:set_C} are two tuning parameters obeying $\mu\geq \mu_0$ and $\sigma\geq \sigma_1(\G\by)$ so that  $\BY\in\CS$. For conciseness, we  take  $\sigma=\sigma_1(\BL_0)/(1-\varepsilon_0)$ for some $0<\varepsilon_0<1$ and will later show that $\sigma\geq \sigma_1(\G\by)$ with high probability. 

\begin{theorem}[Exact Recovery]\label{thm:exact_recovery} Assume $\G\by$ is $\mu_0$-incoherent. Let $\varepsilon_0$ be a absolute constant obeying $0<\varepsilon_0\leq 1/11$. Let $\mu\geq \mu_0$ and $\sigma=\sigma_1(\BL_0)/(1-\varepsilon_0)$. If we take $\lambda=1/4$ in \eqref{eq:obj_F}, then with probability at least $1-c_1\cdot n^{-2}$, the sequence $\lcb\BZ^k\rcb_{k\geq 1}$ returned by Algorithm~\ref{alg:pgd} obeys 
\begin{align*}
\distsq{\BZ^k}{\BY}\leq (1-\eta\nu)^k\distsq{\BZ^0}{\BY}
\end{align*}
for 
\begin{align*}
\eta\leq \frac{\sigma_r(\G\by)}{600(\mu c_sr)^2\sigma_1^2(\G\by)}\quad\mbox{and}\quad \nu=\frac{1}{10}\sigma_r(\G\by)
\end{align*}
provided $m\geq c_2\hspace{0.05cm}\varepsilon_0^{-2}\mu^2c_s^2\kappa^2r^2\log(n)$, where $\kappa=\sigma_1(\G\by)/\sigma_r(\G\by)$.
\end{theorem}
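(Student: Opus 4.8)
I would follow the two-phase template for non-convex factored gradient descent: show (i) a \emph{local contraction} — once $\BZ^k$ lies in a neighborhood of $\S$ it stays there and $\distsq{\BZ^k}{\BY}$ decays geometrically — and (ii) that the spectral initialization $\BZ^0$ lands in that neighborhood. The organizing observation is that the ``population'' version of $f$, obtained by replacing $p^{-1}\P_\Omega$ with $\I$, collapses: using $\G^*\G=\I$ and the fact that $\G\by$ is Hankel (so $(\I-\G\G^*)(\BZ_\SU\BZ_\SV^*)=(\I-\G\G^*)(\BZ_\SU\BZ_\SV^*-\G\by)$), one gets $\tilde f(\BZ)=\ln\BZ_\SU\BZ_\SV^*-\G\by\rn_F^2$, so the population objective $\tilde F=\tilde f+\tfrac14 g$ is exactly the balanced factored matrix-recovery objective of \cite{FGD}. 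The proof therefore splits into a \emph{deterministic} part (behaviour of $\tilde F$) and a \emph{probabilistic} part (controlling $\nabla F-\nabla\tilde F$, the top singular value of $\BL^0$, and the initialization error via matrix Bernstein).

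\emph{Local contraction.} Fix $\BZ=\BZ^k$ in the basin, let $\BR=\BR_{\BZ}$ realize $\dist{\BZ}{\BY}$, and write $\bm{\Delta}=\BZ-\BY\BR$. Since $\BR$ is unitary, $\ln\BY\BR\rn_{2,\infty}=\ln\BY\rn_{2,\infty}\le\sqrt{\mu c_sr\sigma/n}$, so $\BY\BR\in\CS$ and $\PC{\BY\BR}=\BY\BR$; as $\PC{\cdot}$ is non-expansive on the convex set $\CS$,
\begin{align*}
\distsq{\BZ^{k+1}}{\BY}\le\ln\PC{\TBZ^{k+1}}-\PC{\BY\BR}\rn_F^2\le\ln\TBZ^{k+1}-\BY\BR\rn_F^2=\ln\bm{\Delta}\rn_F^2-2\eta\,\Real\la\nabla F(\BZ),\bm{\Delta}\ra+\eta^2\ln\nabla F(\BZ)\rn_F^2.
\end{align*}
Hence it suffices to establish a \emph{regularity condition}: for all $\BZ\in\CS$ with $\dist{\BZ}{\BY}$ sufficiently small,
\begin{align*}
\Real\la\nabla F(\BZ),\bm{\Delta}\ra\ \ge\ \frac{\nu}{2}\,\distsq{\BZ}{\BY}+\frac{1}{2\beta}\,\ln\nabla F(\BZ)\rn_F^2,\qquad \nu=\frac{1}{10}\sigma_r(\G\by),\quad \beta\asymp\frac{(\mu c_sr)^2\sigma_1^2(\G\by)}{\sigma_r(\G\by)};
\end{align*}
inserting this and using $\eta\le 1/\beta$ (which is the stated step-size bound up to the constant) makes the $\ln\nabla F(\BZ)\rn_F^2$ coefficient non-positive, giving $\distsq{\BZ^{k+1}}{\BY}\le(1-\eta\nu)\distsq{\BZ^k}{\BY}$. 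For the population gradient $\nabla\tilde F$ this is precisely the restricted strong convexity / smoothness estimate of \cite{FGD}: the regularizer $g$ with $\lambda=\tfrac14$ keeps $\BZ_\SU,\BZ_\SV$ balanced, the Hankel-penalty term is nonnegative and vanishes on $\S$, and one obtains $\Real\la\nabla\tilde F(\BZ),\bm{\Delta}\ra\gtrsim\sigma_r(\G\by)\distsq{\BZ}{\BY}$, $\ln\nabla\tilde F(\BZ)\rn_F\lesssim\sigma_1(\G\by)\dist{\BZ}{\BY}$, and $\ln\BZ_\SU\BZ_\SV^*-\G\by\rn_F\lesssim\sqrt{\sigma_1(\G\by)}\,\dist{\BZ}{\BY}$; the extra $(\mu c_sr)^2$ in $\beta$ reflects that the $\CS$-constraint amplifies the effective smoothness constant through $\ln\BZ_\SU\rn_{2,\infty}^2\ln\BZ_\SV\rn_{2,\infty}^2$-type factors.

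\emph{From population to sample, and initialization.} The Hankel-penalty part of $f$ is sampling-free, so the only discrepancy is $\nabla f_\SU(\BZ)-\nabla\tilde f_\SU(\BZ)=\big(\G(p^{-1}\P_\Omega-\I)\G^*\big)(\BZ_\SU\BZ_\SV^*-\G\by)\,\BZ_\SV$ and its $\SV$-analogue. Because $\BZ\in\CS$ (so $\ln\BZ_\SU\rn_{2,\infty},\ln\BZ_\SV\rn_{2,\infty}\le\sqrt{\mu c_sr\sigma/n}$) and $\G\by$ is $\mu_0$-incoherent, each residual $W=\BZ_\SU\BZ_\SV^*-\G\by$ has rows of small $\ell_2$-norm; a matrix-Bernstein estimate over the constrained set of such residuals (the variance governed by this row-incoherence, the summands $\G\be_a$ being unit-Frobenius Hankel matrices) then yields, with probability $\ge 1-c\,n^{-2}$ and $m\ge c\,\varepsilon_0^{-2}\mu^2c_s^2\kappa^2r^2\log(n)$, the uniform bound $\ln\big(\G(p^{-1}\P_\Omega-\I)\G^*\big)(W)\rn_2\lesssim(\varepsilon_0/\kappa)\ln W\rn_F$ — the factor $\kappa^{-1}$ being what forces the $\kappa^2$ in $m$ and lets $\varepsilon_0$ be absolute. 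With $\ln\BZ_\SV\rn_2\lesssim\sqrt{\sigma_1(\G\by)}$ and the residual bound this gives $\ln\nabla F(\BZ)-\nabla\tilde F(\BZ)\rn_F\lesssim\varepsilon_0\sigma_r(\G\by)\dist{\BZ}{\BY}$ and $|\Real\la\nabla F(\BZ)-\nabla\tilde F(\BZ),\bm{\Delta}\ra|\lesssim\varepsilon_0\sigma_r(\G\by)\distsq{\BZ}{\BY}$; since $\varepsilon_0\le 1/11$, absorbing these into the population estimates preserves the regularity condition with the stated $\nu,\beta$. The same concentration applied to $p^{-1}\G\P_\Omega\G^*$ acting on the rank-$r$ matrix $\G\by$ gives $\sigma_1(\BL^0)=(1\pm\varepsilon_0)\sigma_1(\G\by)$, so $\sigma=\sigma_1(\BL^0)/(1-\varepsilon_0)\ge\sigma_1(\G\by)$ (hence $\BY\in\CS$), and $\ln p^{-1}\G\P_\Omega(\by)-\G\by\rn_2\lesssim\varepsilon_0\sigma_r(\G\by)/\sqrt{r}$; since $\BL^0$ is the best rank-$r$ approximation of $p^{-1}\G\P_\Omega(\by)$, Weyl's inequality gives $\ln\BL^0-\G\by\rn_F\lesssim\varepsilon_0\sigma_r(\G\by)$, and a standard Procrustes bound (as in \cite{FGD,FIHT}) yields $\distsq{\TBZ^0}{\BY}\lesssim\ln\BL^0-\G\by\rn_F^2/\sigma_r(\G\by)\lesssim\varepsilon_0^2\sigma_r(\G\by)$. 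Projecting onto $\CS$ is non-expansive toward $\BY\BR\in\CS$, so $\dist{\BZ^0}{\BY}\le 2\dist{\TBZ^0}{\BY}$ lies inside the basin for $\varepsilon_0\le1/11$; an induction on $k$ — the displayed contraction keeps $\dist{\BZ^{k+1}}{\BY}\le\dist{\BZ^k}{\BY}$, so the iterate never leaves the basin — then gives $\distsq{\BZ^k}{\BY}\le(1-\eta\nu)^k\distsq{\BZ^0}{\BY}$.

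\emph{Main obstacle.} The heart of the argument is the uniform concentration of $\G(p^{-1}\P_\Omega-\I)\G^*$ on the residual set $\{\BZ_\SU\BZ_\SV^*-\G\by:\BZ\in\CS,\ \dist{\BZ}{\BY}\ \text{small}\}$ with only $O(r^2\log n)$ samples. One cannot simply net a low-rank manifold, since $\CS$ constrains only the row $\ell_2$-norms of the factors and not their rank; the argument must exploit that membership in $\CS$ together with $\mu_0$-incoherence of $\G\by$ and proximity to $\S$ makes each residual incoherent, and then run a matrix-Bernstein estimate that carefully tracks the skew-diagonal weights $w_a$ and the aspect ratio $c_s$. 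This is precisely where the quadratic dependence $\mu^2c_s^2\kappa^2r^2$ in $m$ — and the $(\mu c_sr)^2$ in the step size — is forced.
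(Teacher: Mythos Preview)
Your scaffolding --- two-phase argument, non-expansiveness of $\PC{\cdot}$, reduction to a regularity condition, and the collapse of the population objective to $\ln\BZ_\SU\BZ_\SV^*-\G\by\rn_F^2$ --- matches the paper. The gap is the concentration step. You propose to control $\nabla F-\nabla\tilde F$ by a single uniform bound $\ln\G(p^{-1}\P_\Omega-\I)\G^*(W)\rn_2\lesssim(\varepsilon_0/\kappa)\ln W\rn_F$ over all residuals $W=\BZ_\SU\BZ_\SV^*-\G\by$ with $\BZ\in\CS$, via ``matrix Bernstein over the constrained set.'' But Bernstein controls a single random sum; uniformity over the infinite rank-$2r$ residual class would need a net and union bound, and with only $O(r^2\log n)$ samples this does not close. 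You correctly flag this as the main obstacle but supply no mechanism.

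The paper avoids any such uniform spectral bound. On the \emph{curvature} side it splits $W=\bm{\Delta}_\SU\BY_\SV^*+\BY_\SU\bm{\Delta}_\SV^*+\bm{\Delta}_\SU\bm{\Delta}_\SV^*$: the first two terms lie in the \emph{fixed} tangent space $T$ of $\G\by$, so the single-operator estimate $\ln\P_T\G(\I-p^{-1}\P_\Omega)\G^*\P_T\rn\le\varepsilon_0$ (Lemma~\ref{lem:A3}) suffices with no uniformity issue; the quadratic piece $\bm{\Delta}_\SU\bm{\Delta}_\SV^*$ is handled not by spectral concentration but by a new bilinear estimate (Lemma~\ref{lem:key}) --- essentially a second-eigenvalue bound for the random skew-diagonal matrix $\sum_k\BHH_{a_k}$ --- which holds for \emph{all} nonnegative vectors and is then specialized via the row-norm bound $\ln\bm{\Delta}\rn_{2,\infty}\le 2\sqrt{\mu c_sr\sigma/n}$ coming from $\CS$. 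On the \emph{smoothness} side the paper uses \emph{no concentration at all}: $\ln\nabla f(\BZ)\rn_F^2$ is bounded deterministically from $\ln\BZ\rn_{2,\infty}^2\le\mu c_sr\sigma/n$ (and $p^{-1}m=n$), and this --- not the population smoothness, nor a perturbation term --- is where the factor $(\mu c_sr)^2$ in the step size originates. Finally, $\nabla g$ contributes a term $\ln\BY^*\BD\bm{\Delta}\rn_F^2$ (with $\BD=\diag(\BI_{n_1},-\BI_{n_2})$) that is \emph{not} dominated by $\sigma_r\ln\bm{\Delta}\rn_F^2$; the paper carries it explicitly on both the curvature and smoothness sides of the regularity inequality and lets it cancel under the step-size constraint, a structure your population-plus-perturbation framing does not expose.
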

\begin{remark*}{\normalfont
1). After an approximation of $\G\by$, given by $\BZ^k_\SU(\BZ^k_\SV)^*$, is obtained from PGD, we can estimate $\by$ by $\by^k= \G^*(\BZ^k_\SU(\BZ^k_\SV)^*)$, and in turn estimate $\bx$ by $\D^{-1}\by^k$. Recall from \eqref{eq:R_form} that $\BQ_{\SZ^k}$ is a unitary matrix which obeys $\dist{\BZ^k}{\BM}=\ln\BZ^k-\BM\BQ_{\SZ^k}\rn_F$.
A simple calculation yields
{\begin{align*}
\ln\bx^k-\bx\rn_2&\leq\ln\by^k-\by\rn_2 = \ln \G^*(\BZ^k_\SU(\BZ^k_\SV)^*)-\G^*(\G\by)\rn_2\\
&\leq \ln \BZ^k_\SU(\BZ^k_\SV)^*-\BY_\SU\BY_\SV^*\rn_F\leq \frac{1}{\sqrt{2}}\ln\BZ^k(\BZ^k)^*-\BM\BM^*\rn_F\\
&= \frac{1}{\sqrt{2}}\ln \BZ^k(\BZ^k-\BM\BQ_{\SZ^k})^*+(\BZ^k-\BM\BQ_{\SZ^k})(\BM\BQ_{\SZ^k})^*\rn_F\\
&\leq \frac{1}{\sqrt{2}}\lb\|\BZ^k\|_2+\ln\BM\rn_2\rb\dist{\BZ^k}{\BM}\rightarrow 0, \quad\mbox{as }\dist{\BZ^k}{\BM}\rightarrow 0.
\end{align*}}

2). After each iteration, Theorem~\ref{thm:exact_recovery} implies that the distance between the estimate given by PGD and $\BM$ is reduced by at least of a factor of $1-O(1/(\mu c_sr\kappa)^2)$. Thus, after $k\approx O((\mu c_sr\kappa)^2\log(1/\epsilon))$ iterations, one has $\distsq{\BZ^k}{\BY}\leq \epsilon\cdot\distsq{\BZ^0}{\BY}$.

3). It was shown in \cite{FIHT} that FIHT can achieve exact recovery when the number of revealed entries is of order $O(\kappa^6r^2\log^2(n))$. In contrast, the sampling complexity of PGD is only a quadratic function of $\kappa$ and a linear function of $\log(n)$. Moreover, the exact recovery guarantee of FIHT relies on a more complicated initialization scheme which requires  a partition of the observed entries into $O(\log(n))$ groups, while the initial guess constructed  for the exact recovery guarantee of PGD can be computed much more easily.
}\end{remark*}
\subsection{Extension to Higher Dimension}
So far we have restricted our attention to one-dimensional spectrally sparse signal reconstruction problem. Our algorithm and results can be extended to higher dimensions based on the Hankel structures of multi-dimensional
spectrally sparse signals. Without loss of generality, we discuss the two-dimensional setting but emphasize that the situation in general $d$-dimensions is similar.

Let $w_k=e^{(2\pi\imath f_{1k}-\tau_{1k})}$ and $z_k=e^{(2\pi\imath f_{2k}-\tau_{2k})}$ for $r$ frequency pairs $(f_{1k}, f_{2k})\in[0,1)^2$ and $r$ damping factor pairs $(\tau_{1k}, \tau_{2k})\in\R^2_+$. A two-dimensional spectrally sparse array $\BX\in\C^{N_1\times N_2}$  can be expressed as 
\begin{align*}
\BX^{(a,b)} = \sum_{k=1}^r d_kw_k^{a}z_k^{b}, \quad (a,b)\in[N_1] \times [N_2].
\end{align*}
The two-fold Hankel matrix of $\BX$ is given by 
\begin{align*}
\H\BX =\begin{bmatrix}
\H\BX^{(:,0)} & \H\BX^{(:,1) }& \H\BX^{(:,2)} & \cdots & \cdots& \H\BX^{(:,N_2-n_2)}\\
\H\BX^{(:,1) } & \H\BX^{(:,2)} &\cdots& \cdots & \cdots &  \H\BX^{(:,N_2-n_2+1)}\\
 \H\BX^{(:,2)} & \cdots& \cdots & \cdots & \cdots & \H\BX^{(:,N_2-n_2+2)}\\
\vdots & \vdots & \vdots & \vdots & \vdots &\vdots\\
\H\BX^{(:,n_2-1)} & \H\BX^{(:,n_2)} & \cdots & \cdots &  \cdots  &\H\BX^{(:,N_2-1)}
\end{bmatrix},
\end{align*}
where each block is an $n_1\times (N_1-n_1+1)$ Hankel matrix corresponding to a column of $\BX$, 
\begin{align*}
\H\BX^{(:,b)} = \begin{bmatrix}
\H\BX^{(0,b)}  & \H\BX^{(1,b)} & \H\BX^{(2,b)} & \cdots & \cdots& \H\BX^{(N_1-n_1,b)}\\
\H\BX^{(1,b)} &\H\BX^{(2,b)} &\cdots& \cdots & \cdots &  \H\BX^{(N_1-n_1+1,b)}\\
\H\BX^{(2,b)} & \cdots& \cdots & \cdots & \cdots & \H\BX^{(N_1-n_1+2,b)}\\
\vdots & \vdots & \vdots & \vdots & \vdots &\vdots\\
\H\BX^{(n_1-1,b)} & \H\BX^{(n_1,b)} & \cdots & \cdots &  \cdots  &\H\BX^{(N_1-1,b)}
\end{bmatrix}.
\end{align*}
Clearly, $\H\BX$ is an $(n_1n_2)\times (N_1-n_1+1)(N_2-n_2+1)$ matrix. Letting $i = i_1 + i_2\cdot n_1$ and $j  = j_1 + j_2 \cdot (N_1-n_1+1)$, the $(i,j)$-th entry of $\H\BX$ is given by 
\begin{align*}
\H\BX^{(i,j)} = \BX^{(i_1+j_1,i_2+j_2)} =\sum_{k=1}^r d_k\lb w_k^{i_1}z_k^{i_2}\rb\lb w_k^{j_1}z_k^{j_2}\rb.\numberthis\label{eq:2d_signal_dec}
\end{align*} 
For $k=1,\cdots,r$, we define  the four vectors $\bw_k^{[n_1]}$, $\bw_k^{[N_1-n_1+1]}$, $\bz_k^{[n_2]}$, and $\bz_k^{[N_2-n_2+1]}$ as 
\begin{align*}
\bw_k^{[n_1]}=\begin{bmatrix}1\\w_k\\\vdots\\w_k^{n_1-1}\end{bmatrix},\quad
\bw_k^{[N_1-n_1+1]}=\begin{bmatrix}1\\w_k\\\vdots\\w_k^{N_1-n_1}\end{bmatrix},\quad
\bz_k^{[n_2]}=\begin{bmatrix}1\\z_k\\\vdots\\z_k^{n_2-1}\end{bmatrix},\quad\mbox{and }
\bz_k^{[N_2-n_2+1]}=\begin{bmatrix}1\\z_k\\\vdots\\z_k^{N_2-n_2}\end{bmatrix}.
\end{align*}
Let $\BE_L$ be an $(n_1n_2)\times r$ matrix with the $k$-th column being given by \kw{$\bz_k^{[n_2]}\otimes \bw_k^{[n_1]}$}, and let $\BE_R$ be an $(N_1-n_1+1)(N_2-n_2+1)\times r$ matrix with the $k$-th column being given by \kw{$\bz_k^{[N_2-n_2+1]}\otimes \bw_k^{[N_1-n_1+1]}$}. Then it follows from \eqref{eq:2d_signal_dec} that $\H\BX$ admits the Vandermonde decomposition 
\begin{align*}
\H\BX = \BE_L\BD\BE_R^T,
\end{align*}
where $\BD=\diag(d_1,\cdots,d_r)$. Thus, it is self-evident that $\H\BX$ is  a rank $r$ matrix. 

As in the one-dimensional case, the goal in two-dimensional spectral sparse signal reconstruction is to reconstruct $\BX$ from the partial revealed entries of $\BX$, denoted $\P_\Omega(\BX)$, where $\Omega$ is a subset of $[N_1]\times [N_2]$.
\kw{
Let $w_a$ be the number of entires in the $a$-th skew-diagonal of an $n_1\times (N_1-n_1+1)$ matrix, and let $w_b$ be the number of entires in the $b$-th skew-diagonal of an $n_2\times (N_2-n_2+1)$ matrix. Define $\D$ as a linear operator from  $\C^{N_1\times N_2}$ to $\C^{N_1\times N_2}$ which scales the $(a,b)$-th entry of an $N_1\times N_2$ matrix by $\sqrt{w_aw_b}$. 
}After the change of variables $\BYY=\D\BX$ and $\G=\H\D^{-1}$, we can instead consider the recovery of $\BYY$ from $\P_\Omega(\BYY)$, which is equivalent to a low rank Hankel matrix completion problem since $\G\BYY=\H\BX$ is rank $r$.
Following the route set up in Section~\ref{sec:alg_pgd}, this task can be attempted by minimizing 
\begin{align*}
F(\BZ) &=  \ln(\I-\G\G^*)(\BZ_{\SU}\BZ_{\SV}^*)\rn_F^2+p^{-1}\la\P_\Omega(\G^*(\BZ_{\SU}\BZ_{\SV}^*)-\BYY),\G^*(\BZ_{\SU}\BZ_{\SV}^*)-\BYY\ra\\&\quad+\frac{\lambda}{2}\ln\BZ_{\SU}^*\BZ_{\SU}-\BZ_{\SV}^*\BZ_{\SV}\rn_F^2
\end{align*}
subject to a feasible set $\CS$, where $\G^*$ is the adjoint of $\G$ which obeys $\G^*\G=\I$, \kw{$$\BZ=\begin{bmatrix}\BZ_\SU\\\BZ_\SV\end{bmatrix}$$ is an $(n_1n_2+(N_1-n_1+1)(N_2-n_2+1))\times r$} matrix, and $\CS$ is a convex set  similar to the one defined in \eqref{eq:set_C} but  the size of $\BZ$ is different.

Therefore, a projected gradient descent algorithm can also be  developed for the two-dimensional spectrally sparse signal reconstruction problem.  Let $\G\BYY=\BU\BS\BV^T$ be the SVD of $\G\BYY$. We say $\G\BYY$ is $\mu_0$-incoherent if there exists a numerical constant $\mu_0>0$ such that 
\kw{\begin{align*}
\ln\BU\rn_{2,\infty} \leq \sqrt{\frac{\mu_0c_sr}{N_1N_2}}\quad\mbox{and}\quad\ln\BV\rn_{2,\infty} \leq \sqrt{\frac{\mu_0c_sr}{N_1N_2}},
\end{align*}}where $c_s=\max\{N_1N_2/(n_1n_2),N_1N_2/((N_1-n_1+1)(N_2-n_2+1))\}$. Based on \cite[Theorem~1]{MUSIC2D}, one can  show that $\G\BYY$ ($=\H\BX$) is $\mu_0$-incoherent if there is no damping in $\BX$ and the minimum wrap-around distance between the underlying frequencies $\{f_{ik}\}_ {k=1}^r$ is greater than about ${2}/{N_i}$ for $i=1, 2$.
Let $$\BM=\begin{bmatrix} \BM_\SU\\\BM_\SV\end{bmatrix},$$ where $\BM_\SU=\BU\BS^{1/2}$ and $\BM_{\SV}=\BV\BS^{1/2}$.
 If we assume $\G\BYY$ is $\mu_0$-incoherent and $\mu$ and $\sigma$ in $\CS$ are properly tuned such that $\BM\in\CS$, then the exact guarantee analysis of PGD for the one-dimensional case can be extended immediately to the two-dimensional case. It can be  established that \kw{$O(\mu^2c_s^2\kappa^2r^2\log(N_1N_2))$} number of measurements are sufficient for PGD to achieve the successful recovery of a two-dimensional spectrally sparse signal.

\section{Numerical Experiments}\label{sec:numerics}

In this section, we conduct numerical experiments to evaluate the performance of PGD\footnote{In our random simulations, we didn't find much difference between the performance of PGD and the performance of the gradient descent algorithm applied  to $f(\BZ)$ directly. However, since the extra cost incurred by computing the gradient of $g(\BZ)$ and the projection $\P_\CS(\BZ)$ is marginal, it is appealing  to run PGD for its recovery guarantee.}. The experiments are executed from MATLAB R2017a on a 64-bit Linux machine with multi-core Intel Xeon CPU E5-2667 v3 at 3.20GHz and 64GB of RAM.  
In Section~\ref{sec:phase}, we investigate the largest number of Fourier components that can be successfully recovered by PGD. The tests are conducted  on one-dimensional signals in large part due to the high computational cost of this type of simulations. Then we evaluate PGD against  computational efficiency, robustness to additive noise, and sensitivity to mis-specification of model order on three-dimensional signals in Sections~\ref{sec:nu_speed}, \ref{sec:robustness}, and \ref{sec:sensitivity}, respectively.
The initial guess of  PGD is computed using the PROPACK package \cite{PROPACK}, and the parameters $\mu$ and $\sigma$ used in the projection are estimated from the initialization. Instead of using the constant stepsize suggested in the main result  which appears to  be conservative, we choose the stepsize via a backtracking line search in the implementation. 

\subsection{Empirical Phase Transition}\label{sec:phase}

We evaluate the  recovery ability of PGD in the framework of phase transition and compare it with ANM \cite{Tang}, EMaC \cite{Chi} and FIHT \cite{FIHT}. ANM and EMaC are implemented using CVX \cite{CVX} with default parameters. The test spectrally sparse signals of length $n$ with $r$ frequency components are formed in the following way: each frequency $f_k$ is randomly generated from $[0,1)$, and the argument of each complex coefficient $d_k$ is uniformly sampled from $[0,2\pi)$ while the amplitude is selected to be $1+10^{0.5c_k}$ with $c_k$ being uniformly distributed on $[0,1]$. We test two different settings for the frequencies: a)  no separation condition is imposed  on $\{f_k\}_{k=1}^r$, and b)  the wrap-around distances between each pair of  the randomly drawn frequencies are guaranteed to be greater than $1.5/n$. 
After a signal is formed, $m$ of its entries are sampled uniformly at random. For a given triple $(n,r,m)$, $50$ random tests are conducted. We consider an algorithm to have successfully reconstructed a test signal if the root mean squared error (RMSE) is less than $10^{-3}$, $$\|\bm{x}_{rec}-\bm{x}\|_2/\|\bm{x}\|\leq 10^{-3}.$$ The tests are conducted with $n=127$ and $p=m/n$ taking 18 equispaced values from 0.1 to 0.95.  For a fixed pair of $(n,m)$, we start with $r=1$ and then increase the value of $r$ by one until it reaches a value such that the tested algorithm fails all the $50$ random tests.  FIHT is terminated when $\|\bx^{k+1}-\bx^{k}\|_2/\|\bx^{k}\|_2\leq 10^{-6}$ or a maximum number of  iteration is reached. PGD is terminated when one of the following three conditions is met: $\|\bx^{k+1}-\bx^{k}\|_2/\|\bx^{k}\|_2\leq 10^{-7}$, $|F(\tilde{\BZ}^{k+1})-F(\BZ^k)|/F(\BZ^k)\leq 10^{-5}$, or a maximum number of  iteration is reached.

We plot in Figure~\ref{fig:PT-Curves} the empirical recovery phase transition curves that identify the 80\% success rate for each tested algorithm under the two different frequency settings. When the  frequencies are separated by at least $1.5/n$, the right plot shows that ANM has the highest phase transition curve, and the phase transition curve of PGD closely tracks that of ANM. The performance of ANM degrades severely when there is no frequency separation requirement. In both of the frequency settings, the recovery phase transition curves of PGD are overall higher than that of EMaC. In the region of greatest interest where $p\leq 0.5$, the recovery phase transition curves of PGD are substantially higher than that of FIHT.

\begin{figure}[!htb]
\centering
	\includegraphics[width=0.45 \textwidth]{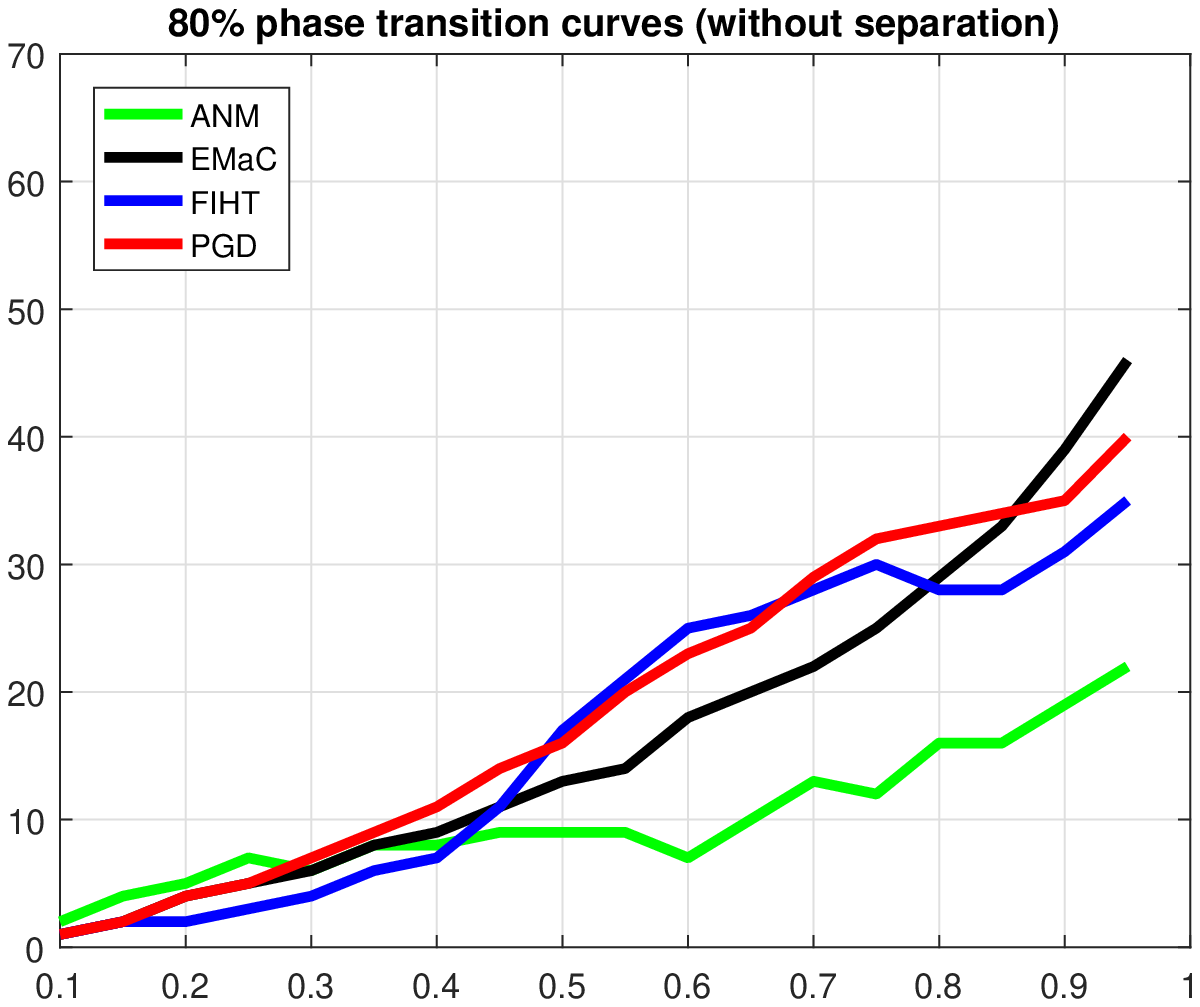}
	\includegraphics[width=0.45 \textwidth]{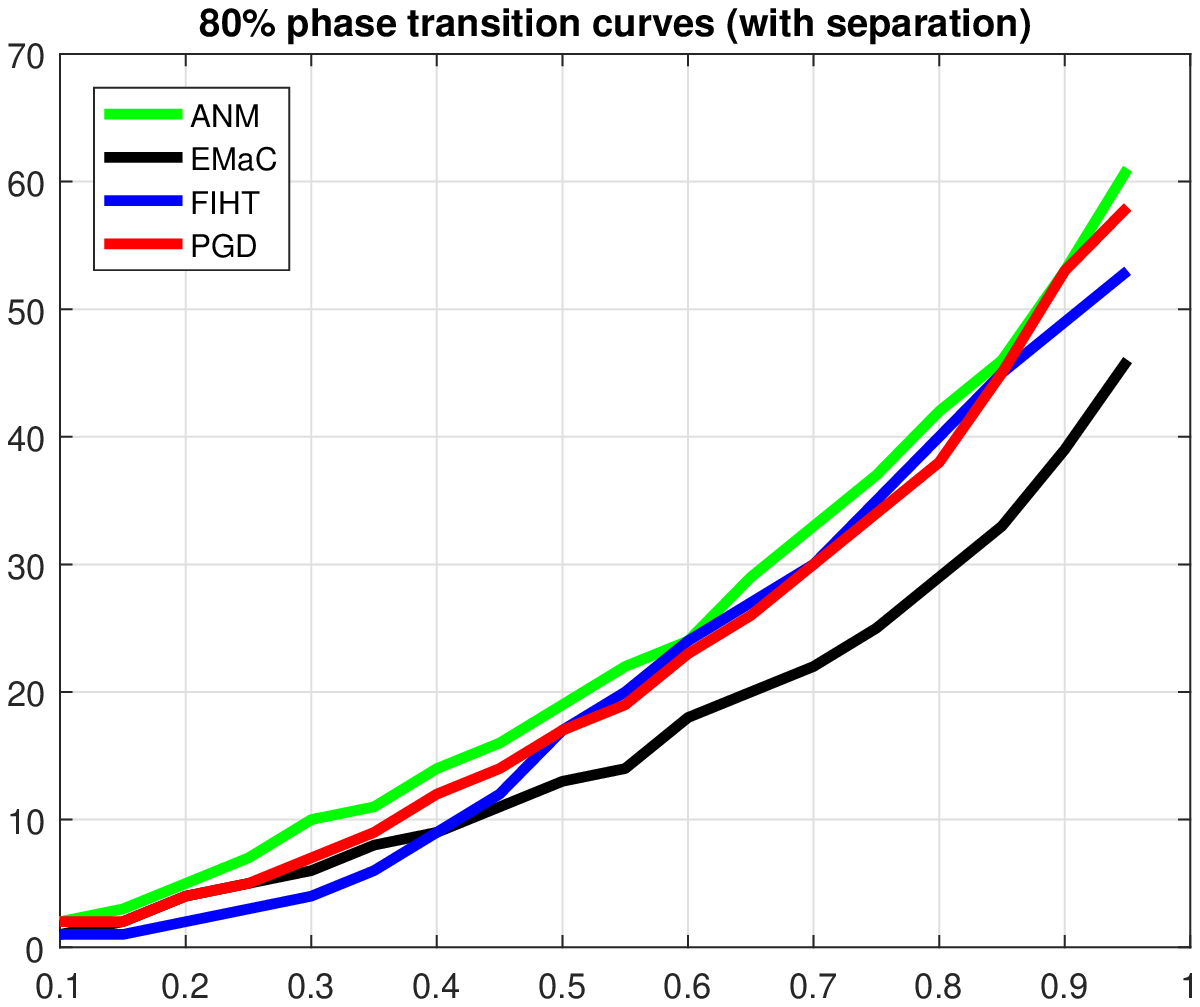}
\caption{$80\%$ phase transition curves: $x$-axis is $p=m/n$ and $y$-axis is $r$. Left:  signals  are formed by random frequencies without separation enforcement; Right:  signals are formed by random frequencies separated by at least $1.5/n$.}
\label{fig:PT-Curves}
\end{figure}
 
\subsection{Computational Efficiency}\label{sec:nu_speed}
\begin{table}[!htb]
\caption{{Average SR, RMSE, ITER and TIME values of FIHT and PGD over $10$ random problem instances in the undamped case with $m\approx 130\log(n)$.}}\label{table:efficiency_undamped}
\begin{center}
\makegapedcells
\setcellgapes{3pt}
\small
\begin{tabular}{ccccccccc}
\hline
\multicolumn{1}{|c|}{$r$} & \multicolumn{4}{c|}{20} & \multicolumn{4}{c|}{30} \\
\hline
\multicolumn{1}{|c|}{} & \multicolumn{1}{c}{SR} & \multicolumn{1}{c}{RMSE} & \multicolumn{1}{c}{ITER} & \multicolumn{1}{c|}{TIME (s)} & \multicolumn{1}{c}{SR} & \multicolumn{1}{c}{RMSE} & \multicolumn{1}{c}{ITER} & \multicolumn{1}{c|}{TIME (s)} \\
\hline
\multicolumn{1}{|c|}{} & \multicolumn{8}{c|}{with separation} \\
\hline
\multicolumn{1}{|c|}{FIHT} & 1 & 3.6e-4 & 18.7 & \multicolumn{1}{c|}{256} & 0.5 & 4.8e-4 & 123 & \multicolumn{1}{c|}{2278} \\
\hline
\multicolumn{1}{|c|}{PGD} & 1 & 1.4e-4 & 33.6 & \multicolumn{1}{c|}{490} & 1 & 2.7e-4 & 48.3 & \multicolumn{1}{c|}{1049} \\
\hline
\multicolumn{1}{|c|}{} & \multicolumn{8}{c|}{without separation} \\
\hline
\multicolumn{1}{|c|}{FIHT} & 1 & 3.5e-4 & 18.6 & \multicolumn{1}{c|}{250} & 0.2 & 4.8e-4 & 66.5 & \multicolumn{1}{c|}{1275} \\
\hline
\multicolumn{1}{|c|}{PGD} & 1 & 1.7e-4 & 33.6 & \multicolumn{1}{c|}{492} & 1 & 3.0e-4 & 54.6 & \multicolumn{1}{c|}{1186} \\
\hline
\end{tabular}
\end{center}
\end{table} 

\begin{table}[!htb]
\caption{{Average SR, RMSE, ITER and TIME values of FIHT and PGD over $10$ random problem instances in the damped case with $m\approx 0.03n$.}}\label{table:efficiency_damped}
\begin{center}
\makegapedcells
\setcellgapes{3pt}
\small
\begin{tabular}{ccccccccc}
\hline
\multicolumn{1}{|c|}{$r$} & \multicolumn{4}{c|}{20} & \multicolumn{4}{c|}{30} \\
\hline
\multicolumn{1}{|c|}{} & \multicolumn{1}{c}{SR} & \multicolumn{1}{c}{RMSE} & \multicolumn{1}{c}{ITER} & \multicolumn{1}{c|}{TIME (s)} & \multicolumn{1}{c}{SR} & \multicolumn{1}{c}{RMSE} & \multicolumn{1}{c}{ITER} & \multicolumn{1}{c|}{TIME (s)} \\
\hline
\multicolumn{1}{|c|}{} & \multicolumn{8}{c|}{with separation} \\
\hline
\multicolumn{1}{|c|}{FIHT} & 1 & 2.9e-4 & 12.7  & \multicolumn{1}{c|}{170} & 0.2  & 3.2e-4 & 16.5 & \multicolumn{1}{c|}{321} \\
\hline
\multicolumn{1}{|c|}{PGD} & 1 & 3.3e-4  & 21.8  & \multicolumn{1}{c|}{321} & 1 & 4.8e-4 & 41.5  & \multicolumn{1}{c|}{1028} \\
\hline
\multicolumn{1}{|c|}{} & \multicolumn{8}{c|}{without separation} \\
\hline
\multicolumn{1}{|c|}{FIHT} & 1 & 2.4e-4  & 10.9  & \multicolumn{1}{c|}{152} & 0.1 & 4.1e-4  & 16  & \multicolumn{1}{c|}{325} \\
\hline
\multicolumn{1}{|c|}{PGD} & 1 & 2.6e-4  & 17.4  & \multicolumn{1}{c|}{258} & 1  & 4.5e-4  & 37.4  & \multicolumn{1}{c|}{863} \\
\hline
\end{tabular}
\end{center}
\end{table} 

PGD has the same leading-order computational complexity as FIHT, and both of them are able to handle large and high-dimensional signals. We compare the computational performance of these two algorithms on undamped and damped three-dimensional spectrally sparse signals 
of size $n=64\times 128\times 512$. 
Tests are conducted with $r\in\{20,30\}$ and $m\approx 130\log(n)$ in the undamped setting while $m\approx 0.03 n$ in the damped setting, and we test signals which obey the frequency separation condition as well as signals which are fully random. 
As to the damping factors, for $1\leq k\leq r$, $1/\tau_{1k}$ is uniformly sampled from $[8~16]$, $1/\tau_{2k}$ is uniformly sampled from $[16~32]$, and $1/\tau_{3k}$ is uniformly sampled from $[64~128]$. 
For each triple of $(r, \mbox{undamped/damped, with/without separation})$, $10$ random problem instances are tested. FIHT is terminated when $\|\bx^{k+1}-\bx^{k}\|_2/\|\bx^{k}\|_2\leq 10^{-3}$ or $\|\bx^{k+1}-\bx^{k}\|_2/\|\bx^{k}\|_2\geq 2$ which usually implies divergence. PGD is terminated when $\|\bx^{k+1}-\bx^{k}\|_2/\|\bx^{k}\|_2\leq 2\times 10^{-4}$.
The average computational time (referred to as TIME) and average number of iterations (referred to as ITER) of FIHT and PGD over tests of successful recovery are summarized in Tables \ref{table:efficiency_undamped} and \ref{table:efficiency_damped} for the  undamped and damped signals, respectively. For the sake of completeness, we also include the ratio of successful recovery out of the 10 random tests  (referred to as SR)  for each algorithm 
 in the tables.
 
First it is worth noting that PGD succeeded in  all the $10$ random tests under each test setting when $r=30$, whereas FIHT  only succeeded in a small fraction of the  tests. Thus,  Tables \ref{table:efficiency_undamped} and \ref{table:efficiency_damped} show that PGD is able to more reliably recover signals that consist of   a larger number of Fourier components, which  coincides with our observations on one-dimensional signals in Section~\ref{sec:phase}. The tables also show that  FIHT requires fewer number of iterations and less computational time than PGD to achieve  convergence for  easier problem instances when $r=20$, while PGD is faster when $r=30$ and the test signals are undamped.

\subsection{Robustness to Additive Noise}\label{sec:robustness}

We demonstrate the performance of PGD under additive noise by conducting tests on 3D signals of the same size as in Section \ref{sec:nu_speed} but with measurements corrupted by the vector
$$
\be=\theta \cdot \|\mathcal{P}_{\Omega}(\bm{x})\|_2 \cdot \frac{\bm{w}}{\|\bm{w}\|_2},
$$
where $\bx$ is a reshaped three-dimensional spectrally sparse signal to be reconstructed, the entries of $\bm{w}$ are i.i.d. standard complex Gaussian random variables, and $\theta$ is referred to as the noise level. 

Tests are conducted with $7$ different values of $\theta$ from $10^{-3}$ to 1, corresponding to $7$ equispaced signal-to-noise ratios (SNR) from 60 to 0 dB. For each value of $\theta$, 10 random instances are tested. PGD is terminated when $\|\bx^{k+1}-\bx^{k}\|_2/\|\bx^{k}\|_2\leq 10^{-5}$. In our simulations, we fix $r=20$ and choose $m\in\{130\log(n),195\log(n)\}$ in the undamped setting while  $m\in\{0.03n,0.045n\}$ in the damped setting. The frequencies of the test signals are randomly generated from $[0,1)$ without the separation requirement and the damping factors are generated in the same fashion as in Section \ref{sec:nu_speed}. The average RMSE of the  reconstructed signals (measured in negative dB) plotted against the input SNR values of the samples is presented in Figure \ref{fig:robustness}.  The plots display a desirable linear scaling between the relative reconstruction error and the noise level for both the undamped and damped signals.  Moreover, the relative reconstruction error decreases linearly on a log-log scale as the number of measurements increases.

\begin{figure}[!htb]
\centering
	\includegraphics[width=0.45 \textwidth]{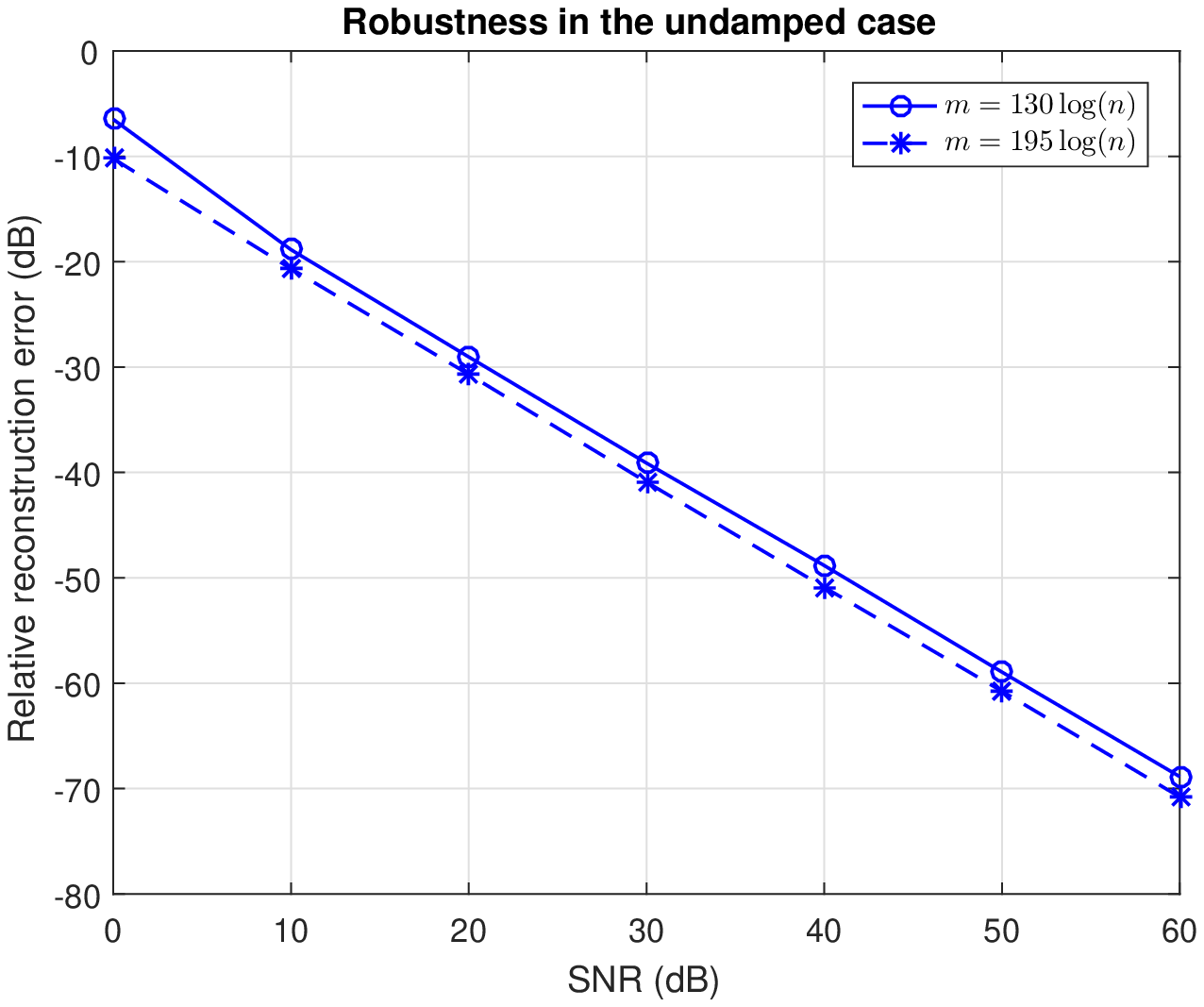}
	\includegraphics[width=0.45 \textwidth]{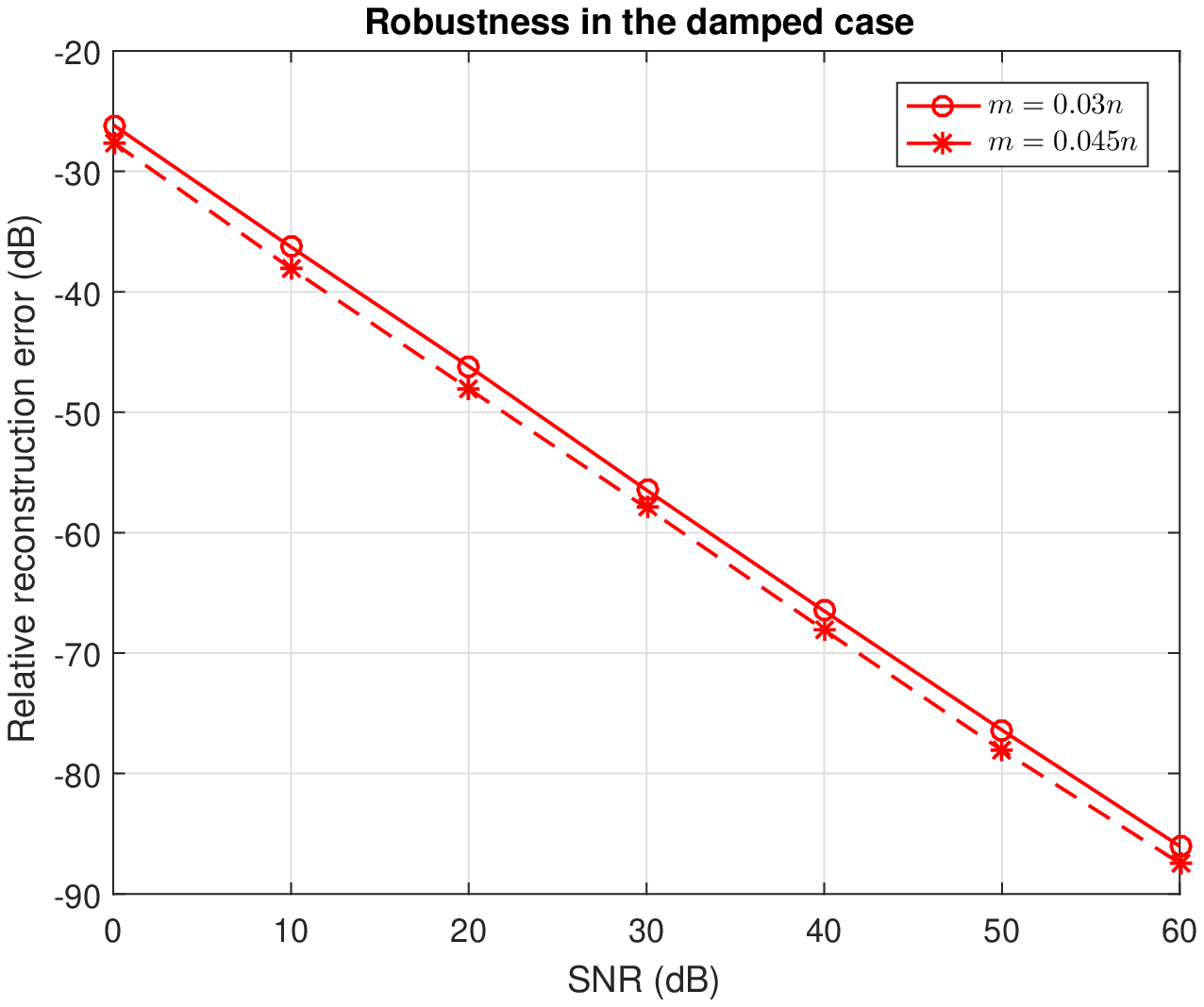}
\caption{Performance of PGD under additive noise. Left: no damping in the test signals; Right:  signals are generated with damping.}
\label{fig:robustness}
\end{figure}

\subsection{Sensitivity to Model Order}\label{sec:sensitivity}

\begin{table}[!htb]
\caption{Median values of ITER and SNR over 10 random problem instances  with $5\leq r\leq 40$ and $\mbox{SNR}\in\{\infty, 20, 0\}$ for undamped signals. The true model order is $r=20$.}\label{table:sensitivity_undamped}
\begin{center}
\makegapedcells
\setcellgapes{3pt}
\small
\begin{tabular}{|ccccccccc}
\hline
\multicolumn{1}{|c|}{Test Rank} & \multicolumn{1}{c|}{$5$} & \multicolumn{1}{c|}{$10$} & \multicolumn{1}{c|}{$15$} & \multicolumn{1}{c|}{$20$} & \multicolumn{1}{c|}{$25$} & \multicolumn{1}{c|}{$30$} & \multicolumn{1}{c|}{$35$} & \multicolumn{1}{c|}{$40$} \\
\hline
\multicolumn{1}{|c|}{} & \multicolumn{8}{c|}{SNR$=\infty$} \\
\hline
\multicolumn{1}{|c|}{ITER} & \multicolumn{1}{c|}{18.5} & \multicolumn{1}{c|}{18.5} & \multicolumn{1}{c|}{23.5} & \multicolumn{1}{c|}{45} & \multicolumn{1}{c|}{798} & \multicolumn{1}{c|}{1047} & \multicolumn{1}{c|}{1209} & \multicolumn{1}{c|}{1343} \\
\hline 
\multicolumn{1}{|c|}{SNR} & \multicolumn{1}{c|}{2.093} & \multicolumn{1}{c|}{4.844} & \multicolumn{1}{c|}{8.293} & \multicolumn{1}{c|}{99.63} & \multicolumn{1}{c|}{69.43} & \multicolumn{1}{c|}{67.08} & \multicolumn{1}{c|}{65.00} & \multicolumn{1}{c|}{63.72} \\ 
\hline
\multicolumn{1}{|c|}{} & \multicolumn{8}{c|}{SNR$=20$} \\
\hline
\multicolumn{1}{|c|}{ITER} & \multicolumn{1}{c|}{17.5} & \multicolumn{1}{c|}{23} & \multicolumn{1}{c|}{28.5} & \multicolumn{1}{c|}{40.5} & \multicolumn{1}{c|}{1524} & \multicolumn{1}{c|}{1969} & \multicolumn{1}{c|}{1964} & \multicolumn{1}{c|}{2514} \\ 
\hline
\multicolumn{1}{|c|}{SNR} & \multicolumn{1}{c|}{2.040} & \multicolumn{1}{c|}{4.848} & \multicolumn{1}{c|}{8.277} & \multicolumn{1}{c|}{29.05} & \multicolumn{1}{c|}{26.70} & \multicolumn{1}{c|}{25.58} & \multicolumn{1}{c|}{24.55} & \multicolumn{1}{c|}{23.75} \\ 
\hline
\multicolumn{1}{|c|}{} & \multicolumn{8}{c|}{SNR$=0$} \\
\hline
\multicolumn{1}{|c|}{ITER} & \multicolumn{1}{c|}{19.5} & \multicolumn{1}{c|}{25} & \multicolumn{1}{c|}{218.5} & \multicolumn{1}{c|}{427.5} & \multicolumn{1}{c|}{589} & \multicolumn{1}{c|}{569.5} & \multicolumn{1}{c|}{638} & \multicolumn{1}{c|}{787.5} \\ 
\hline
\multicolumn{1}{|c|}{SNR} & \multicolumn{1}{c|}{1.812} & \multicolumn{1}{c|}{3.952} & \multicolumn{1}{c|}{5.807} & \multicolumn{1}{c|}{6.407} & \multicolumn{1}{c|}{5.464} & \multicolumn{1}{c|}{4.438} & \multicolumn{1}{c|}{3.773} & \multicolumn{1}{c|}{3.234} \\ 
\hline
\end{tabular}
\end{center}
\end{table}

\begin{table}[!htb]
\caption{Median values of ITER and SNR  over 10 random problem instances  with $5\leq r\leq 40$ and $\mbox{SNR}\in\{\infty, 20, 0\}$ for damped signals. The true model order is $r=20$.}\label{table:sensitivity_damped}
\begin{center}
\makegapedcells
\setcellgapes{3pt}
\small
\begin{tabular}{|ccccccccc}
\hline
\multicolumn{1}{|c|}{Test Rank} & \multicolumn{1}{c|}{$5$} & \multicolumn{1}{c|}{$10$} & \multicolumn{1}{c|}{$15$} & \multicolumn{1}{c|}{$20$} & \multicolumn{1}{c|}{$25$} & \multicolumn{1}{c|}{$30$} & \multicolumn{1}{c|}{$35$} & \multicolumn{1}{c|}{$40$} \\
\hline
\multicolumn{1}{|c|}{} & \multicolumn{8}{c|}{SNR$=\infty$} \\
\hline
\multicolumn{1}{|c|}{ITER} & \multicolumn{1}{c|}{43.5} & \multicolumn{1}{c|}{40.5} & \multicolumn{1}{c|}{48.5} & \multicolumn{1}{c|}{24}  & \multicolumn{1}{c|}{679.5} & \multicolumn{1}{c|}{942.5} & \multicolumn{1}{c|}{1014} & \multicolumn{1}{c|}{1130} \\
\hline 
\multicolumn{1}{|c|}{SNR} & \multicolumn{1}{c|}{2.224} & \multicolumn{1}{c|}{4.873} & \multicolumn{1}{c|}{9.000} & \multicolumn{1}{c|}{96.62}  & \multicolumn{1}{c|}{64.54} & \multicolumn{1}{c|}{61.20} & \multicolumn{1}{c|}{59.57} & \multicolumn{1}{c|}{59.00} \\ 
\hline
\multicolumn{1}{|c|}{} & \multicolumn{8}{c|}{SNR$=20$} \\
\hline
\multicolumn{1}{|c|}{ITER} & \multicolumn{1}{c|}{46} & \multicolumn{1}{c|}{40.5} & \multicolumn{1}{c|}{52.5} & \multicolumn{1}{c|}{26}  & \multicolumn{1}{c|}{3852} & \multicolumn{1}{c|}{4213} & \multicolumn{1}{c|}{6048} & \multicolumn{1}{c|}{5608} \\ 
\hline
\multicolumn{1}{|c|}{SNR} & \multicolumn{1}{c|}{2.223} & \multicolumn{1}{c|}{4.872} & \multicolumn{1}{c|}{8.999} & \multicolumn{1}{c|}{46.23}  & \multicolumn{1}{c|}{44.55} & \multicolumn{1}{c|}{43.36} & \multicolumn{1}{c|}{42.46} & \multicolumn{1}{c|}{41.65} \\ 
\hline
\multicolumn{1}{|c|}{} & \multicolumn{8}{c|}{SNR$=0$} \\
\hline
\multicolumn{1}{|c|}{ITER} & \multicolumn{1}{c|}{57.5} & \multicolumn{1}{c|}{74} & \multicolumn{1}{c|}{52.5} & \multicolumn{1}{c|}{36.5}  & \multicolumn{1}{c|}{2025} & \multicolumn{1}{c|}{1566} & \multicolumn{1}{c|}{2431} & \multicolumn{1}{c|}{3281} \\ 
\hline
\multicolumn{1}{|c|}{SNR} & \multicolumn{1}{c|}{2.217} & \multicolumn{1}{c|}{4.857} & \multicolumn{1}{c|}{8.904} & \multicolumn{1}{c|}{26.26}  & \multicolumn{1}{c|}{24.40} & \multicolumn{1}{c|}{23.18} & \multicolumn{1}{c|}{22.29} & \multicolumn{1}{c|}{21.52} \\ 
\hline
\end{tabular}
\end{center}
\end{table}

In practice, we may not know the exact model order  of a spectrally sparse signal but only have an estimation of it. Thus, it is of great interest to examine the performance of PGD when the model order is under- or over- estimated. The experiments are conducted for three-dimensional signals of the same size as in Section \ref{sec:nu_speed}. Here the true model order is $r=20$, and we observe $m=130\log(n)$ entries for  undamped signals while $m=0.03n$ entries for damped signals. The frequencies are generated randomly and the damping factors are generated in the same way as in Section \ref{sec:nu_speed}. Three noise levels are investigated: SNR$=\infty$ (noise-free), SNR$=20$ (light noise) and SNR$=0$ (heavy noise), and tests are conducted under the same additive noise model as in Section \ref{sec:robustness}. For a fixed noise level,   
we test PGD starting from $r=5$ and then increase the value of $r$ by $5$ each time until the maximum value  $40$  is reached. For each pair of $(\mbox{SNR},~r)$, $10$ random problem instances are tested, and PGD is terminated when $\|\bx^{k+1}-\bx^{k}\|_2/\|\bx^{k}\|_2\leq 10^{-5}$. The median values of ITER and SNR when convergence is attained are reported in Tables \ref{table:sensitivity_undamped} and \ref{table:sensitivity_damped} for  undamped and damped signals, respectively. 
As expected, PGD achieves the best SNR when the input value of $r$ is equal to $20$ (the true model order). The SNR of the estimation is usually very low when $r$ is smaller than $20$ due to  the systematic truncation error. On the other hand, even when $r$ is twice as large as the true model order, the SNR of the estimation is still desirable though it requires dramatically more number of iterations for PGD to converge. 

 \begin{figure}[!htb]
\centering
	\includegraphics[width=0.45 \textwidth]{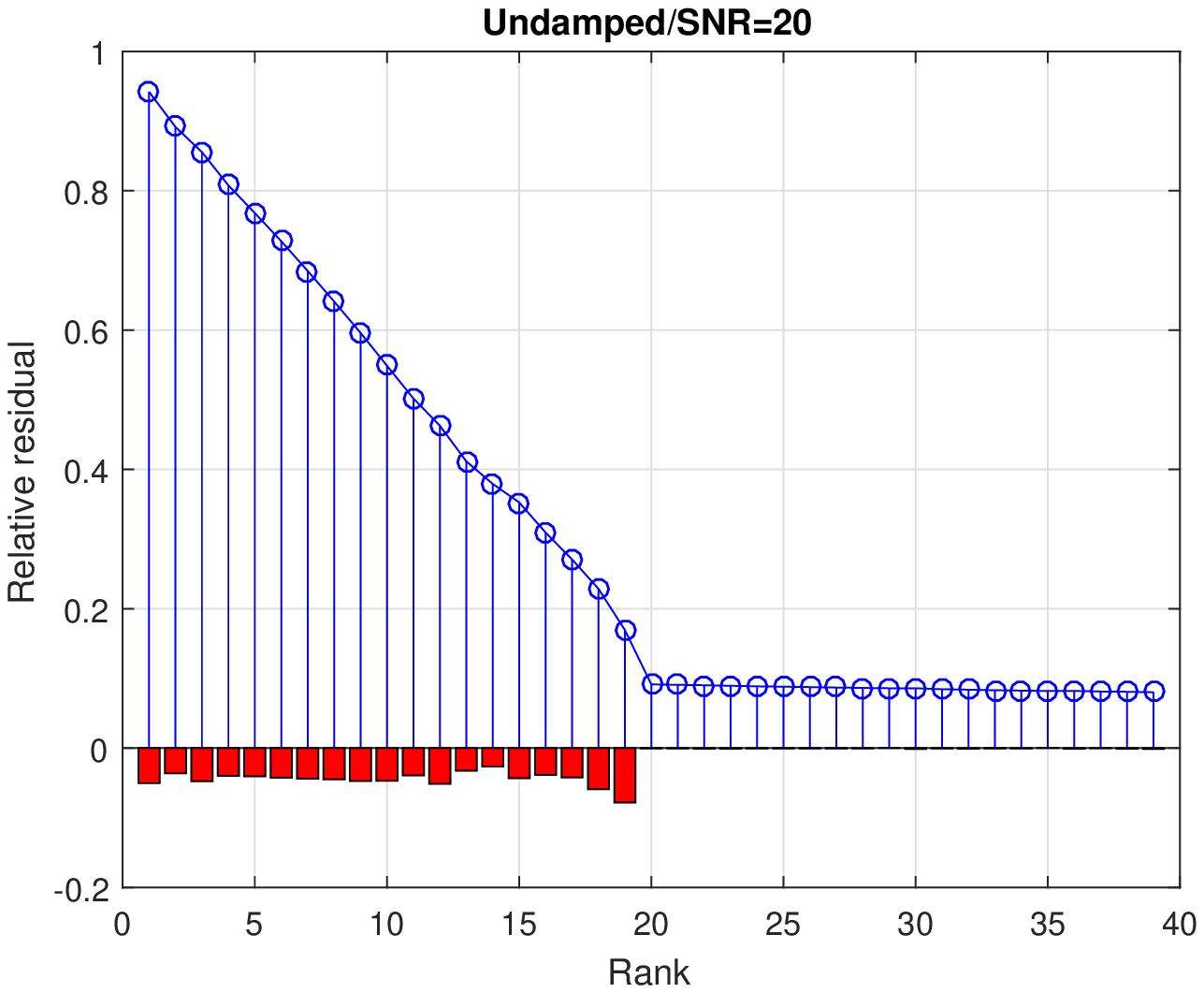}
	\includegraphics[width=0.45 \textwidth]{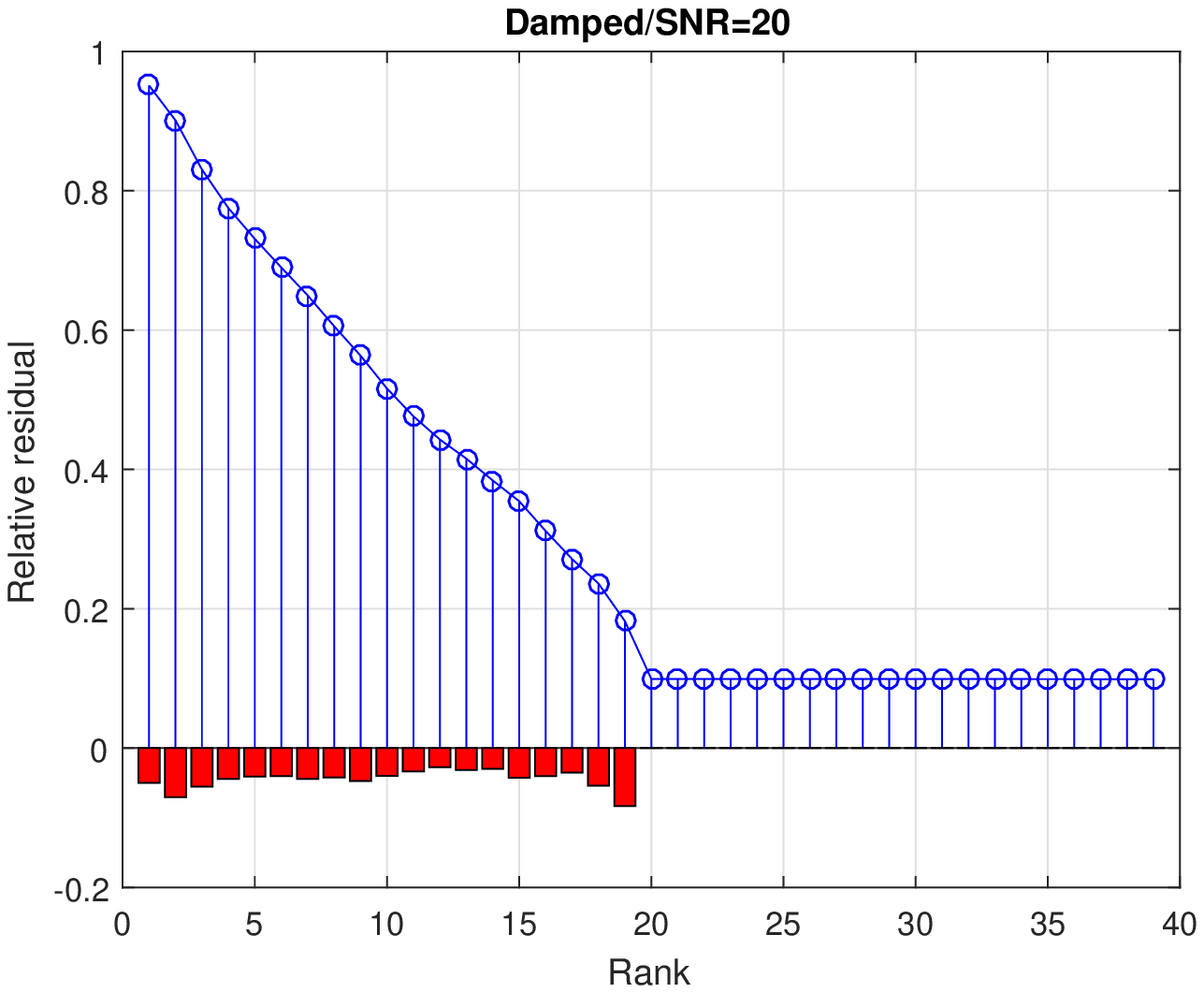}
\caption{ {Demonstration of rank increasing heuristic for  problem instances with $\mbox{SNR}=20$ for undamped (left) and damped (right) signals.}}
\label{fig:increasing-heuristics}
\end{figure}

Next, we suggest a rank increasing heuristic for PGD when the underlying model order is not known a priori. Starting from a sufficiently small $r$, we run PGD until convergence is reached (i.e., when $\|\bx^{k+1}-\bx^{k}\|_2/\|\bx^{k}\|_2\leq 10^{-5}$). Then we  compute and compare the relative residuals over the observed entries for the two successive  testing values of $r$. If the relative residual is improved significantly, we increase the value of $r$; otherwise the algorithm is terminated.  To validate the potential effectiveness of this heuristic, we test PGD for problem instances with SNR$=20$ for both undamped and damped signals, and with the values of $r$ increasing from 1 to 40. The computational results are presented in Figure~\ref{fig:increasing-heuristics}, where we show the relative residual plotted against the values of $r$, as well as the change of the relative residual when $r$ is increased by one. 
 The figure shows  that when $r$ is greater than  $20$, the improvement of the relative residuals  becomes very marginal for both undamped and damped signals.

\section{Proof of Theorem~\ref{thm:exact_recovery}}\label{sec:pf}
The structure of the proof for Theorem~\ref{thm:exact_recovery} follows the typical two-step strategy in the convergence analysis of non-convex optimization algorithms: a {\em basin of attraction} is firstly
established, in which the algorithm  converges linearly to the true solution; and then it can be shown that the initial guess constructed in the algorithm lies inside the basin of attraction. We begin our presentation of the proof with a proposition about the initialization.
\begin{proposition}[Initialization Error]\label{prop:initial_error}
Suppose $\G\by$ is $\mu_0$-incoherent. If $m\geq c\hspace{0.05cm} \varepsilon_0^{-2}\mu c_s\kappa^2 r^2\log(n)$,  then one has $\BY\in\CS$ and 
\begin{align*}
\distsq{\BZ^0}{\BY}\leq 3\varepsilon^2_0\sigma_r(\G\by)\numberthis\label{eq:initial_error}
\end{align*}
with probability at least $1-n^{-2}$.
\end{proposition}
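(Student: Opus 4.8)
The plan is to reduce Proposition~\ref{prop:initial_error} to an operator-norm concentration bound for the one-step spectral estimate $p^{-1}\G\P_\Omega(\by)$ and then transfer it to the factored, projected iterate $\BZ^0$. Concretely, I will (i) show via matrix Bernstein that $p^{-1}\G\P_\Omega(\by)$ concentrates around $\G\by$ in spectral norm; (ii) deduce that its best rank-$r$ truncation $\BL^0$ is close to $\G\by$ in Frobenius norm, losing a factor $\sqrt r$ because the error has rank at most $2r$; (iii) invoke the standard balanced-factorization perturbation bound (as in the analysis of \cite{FGD}) to pass from $\|\BL^0-\G\by\|_F$ to $\dist{\TBZ^0}{\BY}$; and (iv) check, via Weyl's inequality, that the data-driven choice $\sigma=\sigma_1(\BL^0)/(1-\varepsilon_0)$ satisfies $\sigma\ge\sigma_1(\G\by)$, so that $\BY\in\CS$ and $\mathcal S\subset\CS$, and then use that $\PC{\cdot}$ is the non-expansive Euclidean projection onto the convex set $\CS$ to conclude $\dist{\BZ^0}{\BY}\le\dist{\TBZ^0}{\BY}$.

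\textbf{Step 1: spectral concentration.} Under sampling with replacement, $\E[p^{-1}\P_\Omega(\by)]=\by$, hence $\E[p^{-1}\G\P_\Omega(\by)]=\G\by$ and $p^{-1}\G\P_\Omega(\by)-\G\by=\sum_{k=1}^m(\BX_k-\E\BX_k)$ with $\BX_k=\tfrac nm\,y_{a_k}\,\G\be_{a_k}$ independent. The key simplification is that $\G\be_a=w_a^{-1/2}\H\be_a$ and $y_a=\sqrt{w_a}\,x_a$, so $\BX_k=\tfrac nm\,x_{a_k}\H\be_{a_k}$; moreover each $\H\be_a$ has at most one nonzero entry per row and per column, so $\|\H\be_a\|_2=1$ and $(\H\be_a)(\H\be_a)^*$, $(\H\be_a)^*(\H\be_a)$ are $0$–$1$ diagonal matrices. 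The $\mu_0$-incoherence of $\G\by=\BU\BS\BV^*$ gives $|x_a|=|[\G\by]^{(i,j)}|\le\|\BU^{(i,:)}\|_2\,\sigma_1(\G\by)\,\|\BV^{(j,:)}\|_2\le\tfrac{\mu_0c_sr}{n}\sigma_1(\G\by)$, hence $\|\BX_k\|_2\le R:=\tfrac{\mu_0c_sr}{m}\sigma_1(\G\by)$. For the variance, $\sum_k\E[(\BX_k-\E\BX_k)(\BX_k-\E\BX_k)^*]\preceq\tfrac nm\sum_a|x_a|^2(\H\be_a)(\H\be_a)^*$ is diagonal with $i$th entry $\tfrac nm\sum_j|x_{i+j}|^2=\tfrac nm\|[\G\by]^{(i,:)}\|_2^2\le\tfrac nm\cdot\tfrac{\mu_0c_sr}{n}\sigma_1^2(\G\by)$, and the analogous bound using $\|[\G\by]^{(:,j)}\|_2$ controls $\sum_k\E[(\BX_k-\E\BX_k)^*(\BX_k-\E\BX_k)]$; thus the variance proxy is at most $v:=\tfrac{\mu_0c_sr}{m}\sigma_1^2(\G\by)$. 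Matrix Bernstein then yields, for any target $\varepsilon\in(0,1)$,
\[
\|p^{-1}\G\P_\Omega(\by)-\G\by\|_2\le\varepsilon\,\sigma_r(\G\by)\quad\text{with probability}\ \ge 1-n^{-2},\quad\text{whenever}\ \ m\ge c\,\varepsilon^{-2}\mu_0c_s\kappa^2r\log n,
\]
the $R\log n$ tail term being dominated under the same condition on $m$.

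\textbf{Steps 2--3: truncation, factorization, projection.} Since $\G\by$ has rank $r$ and $\BL^0=\T_r(p^{-1}\G\P_\Omega(\by))$ is its best rank-$r$ approximation, $\|\BL^0-\G\by\|_2\le2\varepsilon\sigma_r(\G\by)$, and as $\BL^0-\G\by$ has rank $\le 2r$, $\|\BL^0-\G\by\|_F\le2\sqrt{2r}\,\varepsilon\,\sigma_r(\G\by)$. Taking $\varepsilon$ small enough that $\|\BL^0-\G\by\|_2\le\tfrac12\sigma_r(\G\by)$, the balanced factors $\TBZ^0$ of $\BL^0$ and $\BY$ of $\G\by$ obey (cf.\ the perturbation lemma used in \cite{FGD}) $\distsq{\TBZ^0}{\BY}\le c_0\,\|\BL^0-\G\by\|_F^2/\sigma_r(\G\by)\le 8c_0 r\varepsilon^2\sigma_r(\G\by)$; choosing $\varepsilon\asymp\varepsilon_0/\sqrt r$ makes this $\le3\varepsilon_0^2\sigma_r(\G\by)$, and the corresponding requirement $m\ge c\varepsilon^{-2}\mu_0c_s\kappa^2r\log n$ becomes $m\ge c'\varepsilon_0^{-2}\mu_0c_s\kappa^2r^2\log n$, which is implied by the hypothesis since $\mu\ge\mu_0$. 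On the same event, Weyl's inequality gives $\sigma_1(\BL^0)\ge\sigma_1(\G\by)-2\varepsilon\sigma_r(\G\by)\ge(1-\varepsilon_0)\sigma_1(\G\by)$ (using $2\varepsilon\sigma_r(\G\by)\le2\varepsilon\sigma_1(\G\by)\le\varepsilon_0\sigma_1(\G\by)$), so $\sigma=\sigma_1(\BL^0)/(1-\varepsilon_0)\ge\sigma_1(\G\by)$; together with $\mu\ge\mu_0$ and the incoherence of $\BU,\BV$ this yields $\|\BY\|_{2,\infty}\le\sqrt{\mu c_sr\sigma_1(\G\by)/n}\le\sqrt{\mu c_sr\sigma/n}$, i.e.\ $\BY\in\CS$ and hence $\mathcal S\subset\CS$. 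Finally, $\PC{\cdot}$ is $1$-Lipschitz and $\BY\BR\in\mathcal S\subset\CS$ for every unitary $\BR$, so
\[
\dist{\BZ^0}{\BY}=\min_{\BR\BR^*=\BI}\|\PC{\TBZ^0}-\PC{\BY\BR}\|_F\le\min_{\BR\BR^*=\BI}\|\TBZ^0-\BY\BR\|_F=\dist{\TBZ^0}{\BY},
\]
which with Step~2 gives $\distsq{\BZ^0}{\BY}\le3\varepsilon_0^2\sigma_r(\G\by)$.

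\textbf{Main difficulty.} Two points need care. First, obtaining the variance proxy with the sharp dependence $\mu_0c_sr$ (rather than $(\mu_0c_sr)^2$): this hinges on the cancellation $y_a\G\be_a=x_a\H\be_a$ produced by the diagonal scaling $\D^{-1}$, which collapses the per-index second moments into the row/column energies $\|[\G\by]^{(i,:)}\|_2^2$, $\|[\G\by]^{(:,j)}\|_2^2$ that incoherence controls directly; without this observation one picks up an extra factor of $\sqrt{n/c_s}$ from $|y_a|$. Second, the passage from the operator-norm bound of Step~1 to the Frobenius-norm factored distance of Step~2 costs a factor $\sqrt r$ in $\varepsilon$, and this is exactly the source of the $r^2$ (rather than $r$) appearing in the sample complexity. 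The balanced-factorization perturbation inequality itself is by now standard and can be quoted.
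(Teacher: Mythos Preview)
Your proposal is correct and follows essentially the same route as the paper's proof: matrix-Bernstein concentration for $p^{-1}\G\P_\Omega(\by)-\G\by$ (which the paper quotes as Lemma~\ref{lem:A1}), a rank-$2r$ passage from spectral to Frobenius norm, a balanced-factorization perturbation bound (the paper uses Lemma~\ref{lem:A2} together with an explicit dilation inequality $\|\TBZ^0(\TBZ^0)^*-\BY\BY^*\|_F^2\le 4\|\BL^0-\G\by\|_F^2$), Weyl's inequality to certify $\sigma\ge\sigma_1(\G\by)$ and hence $\BY\in\CS$, and non-expansiveness of $\PC{\cdot}$. The only cosmetic difference is that you re-derive the concentration bound and quote the factorization lemma, whereas the paper does the reverse.
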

\begin{proof}
By  Lemma~\ref{lem:A1}, one has 
\begin{align*}
\ln\BL_0-\G\by\rn_2&\lesssim \sqrt{\frac{\mu_0c_sr\log(n)}{m}}\ln\G\by\rn_2\leq \sqrt{\frac{\mu c_sr\log(n)}{m}}\ln\G\by\rn_2\numberthis\label{eq:initial_eq1}
\end{align*}
with probability at least $1-n^{-2}$, where in the second inequality we use the assumption $\mu_0\leq \mu$. Together with the assumption on $m$, it follows immediately that 
\[
\sigma_1(\G\by)\leq \frac{\sigma_1(\BL^0)}{1-\varepsilon_0}.
\]
Consequently, one has $\BY\in\CS$ since 
$
\ln\BY\rn_{2,\infty}\leq \sqrt{\sigma_1(\G\by)}\max\{\ln\BU\rn_{2,\infty},\ln\BV\rn_{2,\infty}\}.
$
Moreover, one can easily see that $\BY\BR\in\CS$ for all $r$ by $r$ unitary matrices $\BR$.

Since $\BZ^0=\P_{\CS}(\TBZ^0)$ and $\BY\BR_{\TSZ^0}\in\CS$, one has 
\begin{align*}
\dist{\BZ^0}{\BY}\leq \ln\BZ^0-\BY\BR_{\TSZ^0}\rn_F\leq \ln\TBZ^0-\BY\BR_{\TSZ^0}\rn_F=\dist{\TBZ^0}{\BY}.\numberthis\label{eq:initial_eq2}
\end{align*}
Therefore, in order to show \eqref{eq:initial_error}, it suffices to bound $\dist{\TBZ^0}{\BY}$.
By Lemma~\ref{lem:A2}, one has
\begin{align*}
\distsq{\TBZ^0}{\BY}&\leq \frac{1}{2(\sqrt{2}-1)\sigma_r^2(\BY)}\ln\TBZ^0(\TBZ^0)^*-\BY\BY^*\rn_F^2\\
&=\frac{1}{4(\sqrt{2}-1)\sigma_r(\G\by)}\ln\TBZ^0(\TBZ^0)^*-\BY\BY^*\rn_F^2.\numberthis\label{eq:initial_eq3}
\end{align*}
Let $\BA$, $\BB$, $\BC$, and $\BD$ be four $s\times r$ complex matrices with $s\geq r$. 
A simple calculation yields
\begin{align*}
\la\BA\BA^*,\BB\BB^*\ra+\la\BC\BC^*,\BD\BD^*\ra&=\la\sum_{i=1}^r\ba_i\ba_i^*,\sum_{i=1}^r\bb\bb^*\ra+\la\sum_{i=1}^r\bc_i\bc_i^*,\sum_{i=1}^r\bd_i\bd_i^*\ra\\
&=\sum_{i,j=1}^r\lb\la\ba_i\ba_i^*,\bb_j\bb_j^*\ra+\la\bc_i\bc_i^*,\bd_j\bd_j^*\ra\rb\\
&=\sum_{i,j=1}^r\lb\la\ba_i^*\bb_j,\ba_i^*\bb_j\ra+\la\bc_i^*\bd_j,\bc_i^*\bd_j\ra\rb\\
&\geq 2\sum_{i,j=1}\Real\la \ba_i^*\bb_j,\bc_i^*\bd_j\ra\\
&=2\sum_{i,j=1}\Real\la \ba_i\bc_i^*,\bb_j\bd_j^*\ra\\
&=2\Real\la \BA\BC^*,\BB\BD^*\ra,\numberthis\label{eq:initial_eq4}
\end{align*}
where $\ba_i$, $\bb_i$, $\bc_i$ and $\bd_i$ are the $i$-th columns of $\BA$, $\BB$, $\BC$ and $\BD$ respectively. Then it follows that 
\begin{align*}
\ln\TBZ^0(\TBZ^0)^*-\BY\BY^*\rn_F^2 &=2\ln\BU^0\BS^0(\BV^0)^*-\BU\BS\BV^*\rn_F^2+\ln\BU^0\BS^0(\BU^0)^*-\BU\BS\BU^*\rn_F^2\\
&\quad+\ln\BV^0\BS(\BV^0)^*-\BV\BS\BV^*\rn_F^2\\
&\leq 4\ln\BU^0\BS^0(\BV^0)^*-\BU\BS\BV^*\rn_F^2=4\ln\BL^0-\G\by\rn_F^2\numberthis\label{eq:initial_eq5}
\end{align*}
where the inequality follows from 
\begin{align*}
&\ln\BU^0\BS^0(\BU^0)^*-\BU\BS\BU^*\rn_F^2+\ln\BV^0\BS^0(\BV^0)^*-\BV\BS\BV^*\rn_F^2\leq 2\ln\BU^0\BS^0(\BV^0)^*-\BU\BS\BV^*\rn_F^2,
\end{align*}
which can be easily verified using \eqref{eq:initial_eq4}. Substituting \eqref{eq:initial_eq5} into \eqref{eq:initial_eq3}
gives 
\begin{align*}
\distsq{\TBZ^0}{\BY}\leq\frac{1}{(\sqrt{2}-1)\sigma_r(\G\by)}\ln\BL^0-\G\by\rn_F^2.
\end{align*}
Since 
\begin{align*}
\ln\BL^0-\G\by\rn_F\lesssim\sqrt{\frac{\mu c_sr^2\log(n)}{m}}\ln\G\by\rn_2\leq \varepsilon_0\sigma_r(\G\by),
\end{align*}
we finally have 
\begin{align*}
\distsq{\BZ^0}{\BY}\leq\distsq{\TBZ^0}{\BY}\leq 3\varepsilon^2_0\sigma_r(\G\by),
\end{align*}
which completes the proof of \eqref{eq:initial_error}.
\end{proof}
With Proposition~\ref{prop:initial_error} in place, the proof of Theorem~\ref{thm:exact_recovery} is complete if we can establish the local contraction property of Algorithm~\ref{alg:pgd}, as stated in the following proposition.
\begin{proposition}[Local Contraction]\label{prop:local_conv}
Assume $\BY\in\CS$. Let $\varepsilon_0$ be an absolute constant obeying $0<\varepsilon_0\leq\frac{1}{11}$. For any matrix \kw{$\BZ\in\CS$}, define 
\begin{align*}
\TBZ=\BZ-\eta\nabla F(\BZ)\quad\mbox{and}\quad\BZ^+=\PC{\TBZ}.
\end{align*}
There exists a numerical constant $\nu=\frac{1}{10}\sigma_r(\G\by)$ such that with probability at least $1-c_1\cdot n^{-2}$, 
\begin{align*}
\distsq{\BZ^{+}}{\BY}\leq (1-\eta\nu) \distsq{\BZ}{\BY}
\end{align*}
holds for all $\BZ$ obeying \kw{$\distsq{\BZ}{\BY}\leq 3\varepsilon_0^2\sigma_r(\G\by)$} provided 
\kw{\begin{align*}
m\geq c_2\hspace{0.05cm} \varepsilon_0^{-2}\mu^2c_s^2\kappa^2r^2\log(n)\quad\mbox{and}\quad\eta\leq\frac{\sigma_r(\G\by)}{600(\mu c_sr)^2\sigma_1^2(\G\by)}
\end{align*}}
\end{proposition}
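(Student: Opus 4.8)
The plan is to follow the standard two-ingredient analysis of projected gradient descent on a factored objective: (i) use nonexpansiveness of $\PC{\cdot}$ to reduce one iteration to a quadratic expansion around the optimal rotation, and then (ii) establish, uniformly on the neighborhood $\distsq{\BZ}{\BY}\le 3\varepsilon_0^2\sigma_r(\G\by)$ and with probability at least $1-c_1n^{-2}$, a local restricted-strong-convexity lower bound and a local smoothness (gradient-norm) upper bound for $F$. The reduction mirrors the argument for unstructured matrix completion in \cite{FGD}, whereas both of the quantitative bounds on $F$ will require new structure-specific estimates.

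\textbf{Reduction.} Fix $\BZ\in\CS$ in the neighborhood, let $\BR_\SZ$ be the optimal unitary of \eqref{eq:R_form}, and write $\Delta=\BZ-\BY\BR_\SZ$ so that $\distsq{\BZ}{\BY}=\ln\Delta\rn_F^2$; since $\mathcal{S}\subset\CS$ we have $\BY\BR_\SZ\in\CS$, and nonexpansiveness of the Euclidean projection onto the convex set $\CS$ gives
\begin{align*}
\distsq{\BZ^+}{\BY}&\le\ln\PC{\TBZ}-\PC{\BY\BR_\SZ}\rn_F^2\le\ln\TBZ-\BY\BR_\SZ\rn_F^2\\
&=\distsq{\BZ}{\BY}-2\eta\,\Real\la\nabla F(\BZ),\Delta\ra+\eta^2\ln\nabla F(\BZ)\rn_F^2.
\end{align*}
Hence it suffices to prove, uniformly on the neighborhood, $\Real\la\nabla F(\BZ),\Delta\ra\ge\alpha\,\sigma_r(\G\by)\distsq{\BZ}{\BY}$ for an absolute constant $\alpha>\tfrac1{20}$ and $\ln\nabla F(\BZ)\rn_F^2\le\beta\,(\mu c_sr)^2\sigma_1^2(\G\by)\distsq{\BZ}{\BY}$; the stated bound on $\eta$ then forces $1-2\eta\alpha\sigma_r(\G\by)+\eta^2\beta(\mu c_sr)^2\sigma_1^2(\G\by)\le 1-\eta\nu$ with $\nu=\tfrac1{10}\sigma_r(\G\by)$.

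\textbf{Local restricted strong convexity.} Set $N=\BZ_\SU\BZ_\SV^*-\G\by$, $E=\BZ_\SU^*\BZ_\SU-\BZ_\SV^*\BZ_\SV$, and $\mathcal{A}(N)=(\I-\G\G^*)N+p^{-1}\G\P_\Omega(\G^*N)$. Using $\G^*\G=\I$, $(\I-\G\G^*)(\G\by)=\bzero$, $\G^*(\G\by)=\by$, and the identity $\Delta_\SU\BZ_\SV^*+\BZ_\SU\Delta_\SV^*=N+\Delta_\SU\Delta_\SV^*$, I would rewrite the two pieces of the gradient as
\begin{align*}
\Real\la\nabla f(\BZ),\Delta\ra&=\ln(\I-\G\G^*)N\rn_F^2+p^{-1}\la\P_\Omega(\G^*N),\G^*N\ra+\Real\la\mathcal{A}(N),\Delta_\SU\Delta_\SV^*\ra,\\
\Real\la\nabla g(\BZ),\Delta\ra&=\ln E\rn_F^2+\Real\la E,(\Delta_\SV^*\BY_\SV-\Delta_\SU^*\BY_\SU)\BR_\SZ\ra.
\end{align*}
The first term is the Hankel deviation of $\BZ_\SU\BZ_\SV^*$; for the second I would invoke a uniform restricted-isometry estimate for the reweighted sampling operator restricted to incoherent matrices (the neighborhood analogue of Lemma~\ref{lem:A1}, valid once $m\gtrsim\varepsilon_0^{-2}\mu^2c_s^2\kappa^2r^2\log n$), giving $p^{-1}\la\P_\Omega(\G^*N),\G^*N\ra\ge(1-c\varepsilon_0)\ln\G\G^*N\rn_F^2$, so that the two ``diagonal'' terms together are at least $(1-c\varepsilon_0)\ln N\rn_F^2$. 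Taking $\lambda=\tfrac14$ and feeding $\ln N\rn_F^2+\tfrac14\ln E\rn_F^2$ into the factorization inequality for balanced $\BY$ (in the spirit of Lemma~\ref{lem:A2}), namely $\ln N\rn_F^2+\tfrac14\ln E\rn_F^2\gtrsim\sigma_r(\G\by)\distsq{\BZ}{\BY}$, already yields the required $\gtrsim\sigma_r(\G\by)\distsq{\BZ}{\BY}$. It remains to absorb the two cross terms: each carries a factor $\ln\Delta_\SU\Delta_\SV^*\rn_F\le\distsq{\BZ}{\BY}$, or an $\ln E\rn_F$ against a Procrustes-aligned matrix, and combined with the a priori bounds $\ln N\rn_F\lesssim\sqrt{\sigma_1(\G\by)}\dist{\BZ}{\BY}$, $\ln E\rn_F\lesssim\sqrt{\sigma_1(\G\by)}\dist{\BZ}{\BY}$, $\ln\BZ\rn_2\lesssim\sqrt{\sigma_1(\G\by)}$ and the same sampling estimate applied to the mixed pairing, Young's inequality absorbs them into the diagonal terms whenever $\varepsilon_0\le\tfrac1{11}$.

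\textbf{Smoothness, and the main obstacle.} For the gradient-norm bound I would use $\ln\nabla f_\SU(\BZ)\rn_F\le\big(\ln(\I-\G\G^*)N\rn_F+\ln p^{-1}\G\P_\Omega(\G^*N)\rn_F\big)\ln\BZ_\SV\rn_2$ (and symmetrically for $\nabla f_\SV$), together with $\ln\nabla g(\BZ)\rn_F\le\ln\BZ\rn_2\ln E\rn_F$, the bounds $\ln(\I-\G\G^*)N\rn_F\le\ln N\rn_F\lesssim\sqrt{\sigma_1(\G\by)}\dist{\BZ}{\BY}$, $\ln E\rn_F\lesssim\sqrt{\sigma_1(\G\by)}\dist{\BZ}{\BY}$, $\ln\BZ\rn_2\lesssim\sqrt{\sigma_1(\G\by)}$, and a bound $\ln p^{-1}\G\P_\Omega(\G^*N)\rn_F\lesssim\mu c_sr\ln N\rn_F$ from the sampling concentration (the latter being where the factor $\mu c_sr$ enters), so as to conclude $\ln\nabla F(\BZ)\rn_F^2\lesssim(\mu c_sr)^2\sigma_1^2(\G\by)\distsq{\BZ}{\BY}$. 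The hard part throughout is making all of these sampling estimates hold \emph{uniformly} over the continuum of iterates $\BZ\in\CS$ in the neighborhood, since $N=\BZ_\SU\BZ_\SV^*-\G\by$ varies with $\BZ$ and is only approximately of rank $2r$. I would split $N=\mathcal{P}_TN+\mathcal{P}_{T^\perp}N$, where $T$ is the fixed, $\mu_0$-incoherent tangent space of the rank-$r$ Hankel variety at $\G\by$: on $\mathcal{P}_TN$ one obtains uniform control by an $\varepsilon$-net over $T$ together with a Bernstein/operator-concentration argument, which is precisely what forces $m\gtrsim\varepsilon_0^{-2}\mu^2c_s^2\kappa^2r^2\log n$, while the quadratically small component $\mathcal{P}_{T^\perp}N$ is controlled crudely using the $\ell_{2,\infty}$ bound that $\PC{\cdot}$ enforces on every iterate. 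Coordinating the three parts of the objective — the Hankel projector $\I-\G\G^*$, the reweighted sampling term, and the balancing regularizer $g$ — so that their lower bounds add up to a clean $\sigma_r(\G\by)\distsq{\BZ}{\BY}$ while every cross term stays strictly subdominant is the delicate bookkeeping that distinguishes this argument from the unstructured matrix-completion analysis of \cite{FGD}.
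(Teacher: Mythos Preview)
Your reduction and the overall ``regularity condition'' skeleton match the paper exactly, and your identification of the main obstacle (uniformity over the continuum of $\BZ$) is correct. However, there are two genuine technical gaps where the approach you sketch does not go through at the stated sample complexity.

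\textbf{Curvature: the quadratic term needs more than crude $\ell_{2,\infty}$ control.} After the split $N=\P_T N+\P_{T^\perp}N$, the tangent part is fine (and in fact no $\varepsilon$-net is needed: since $T$ is fixed and $\mu_0$-incoherent, a single matrix-Bernstein bound on $\P_T\G(\I-p^{-1}\P_\Omega)\G^*\P_T$, i.e.\ Lemma~\ref{lem:A3}, gives uniform control). The problem is the quadratic piece. What must actually be bounded uniformly is $p^{-1}\sum_k|\la\BG_{a_k},\BH_\SU\BH_\SV^*\ra|^2$, and a purely $\ell_{2,\infty}$ estimate gives at best a quantity of order $\mu c_s r\sigma\,\|\BH\|_F^2$ (or worse), not the required $\varepsilon_0\sigma_r(\G\by)\|\BH\|_F^2$. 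The paper's fix is a separate lemma (Lemma~\ref{lem:key}): a single operator-norm concentration bound on $\tfrac{n}{m}\sum_k\BHH_{a_k}-\bone_{n_1}\bone_{n_2}^T$ (essentially the second-largest singular value of a random Hankel ``adjacency'' matrix), which then yields, for \emph{all} nonnegative $\bz\in\R^{n_1},\bw\in\R^{n_2}$,
\[
p^{-1}\sum_k\sum_{i+j=a_k}z_iw_j\le\|\bz\|_1\|\bw\|_1+\sqrt{\tfrac{24n\log n}{p}}\,\|\bz\|_2\|\bw\|_2.
\]
Applying this with $z_i=\|\BH_\SU^{(i,:)}\|_2^2$, $w_j=\|\BH_\SV^{(j,:)}\|_2^2$ and then the $\ell_{2,\infty}$ bound gives the needed $\varepsilon_0\sigma_r(\G\by)\|\BH\|_F^2$. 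This random-graph-type lemma is the key Hankel-specific ingredient and is not captured by ``crude $\ell_{2,\infty}$ control.''

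\textbf{Smoothness: the bound $\|p^{-1}\G\P_\Omega(\G^*N)\|_F\lesssim\mu c_s r\,\|N\|_F$ is false at this sample size.} Since $\G$ is a partial isometry, this would require $\|p^{-1}\P_\Omega\bw\|_2\lesssim\mu c_s r\,\|\bw\|_2$ for $\bw=\G^*N$; but $\|p^{-1}\P_\Omega\|_{2\to2}=p^{-1}\max_a n_a\sim (n/m)\log n$, which is far larger than $\mu c_s r$ when $m\ll n$. No concentration on $N$ alone rescues this, because the norm is a supremum over unstructured test directions. The paper avoids the issue entirely by bounding $\|\nabla f(\BZ)\|_F=\sup_{\|\BX\|_F=1}|\la\nabla f(\BZ),\BX\ra|$ and observing that the pairing is against $\BX_\SU\BZ_\SV^*+\BZ_\SU\BX_\SV^*$: both sides of every sample-wise inner product carry a factor with rows in $\CS$, so a per-sample Cauchy--Schwarz followed by $\|\BZ\|_{2,\infty}^2\le\mu c_s r\sigma/n$ and $p^{-1}m=n$ gives the \emph{deterministic} bound $|\la p^{-1}\G\P_\Omega\G^*(\cdot),\cdot\ra|\le\mu c_s r\sigma\,\|\BH\|_F$. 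Your factorization $\|\nabla f_\SU(\BZ)\|_F\le(\cdots)\|\BZ_\SV\|_2$ throws away exactly the row-boundedness of $\BZ_\SV$ that makes this work.
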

Based on the same argument as in \eqref{eq:initial_eq2}, one has $\dist{\BZ^+}{\BY}\leq\dist{\TBZ}{\BY}$. Hence, it suffices to show   that
\begin{align*}
\distsq{\TBZ}{\BY}\leq (1-\eta\nu) \distsq{\BZ}{\BY}\numberthis\label{eq:contraction_obj}
\end{align*}
holds for all matrices $\BZ$ within a small neighborhood of $\BY$. 
Let $\BH=\BZ-\BY\BR_{\SZ}$. We follow a similar route as in \cite{FGD} and instead establish the {\em regularity condition}
\begin{align*}
\Real\la\nabla F(\BZ),\BH\ra\geq \frac{\eta}{2}\ln \nabla F(\BZ) \rn_F^2+\frac{\nu}{2}\ln\BH\rn_F^2\numberthis \label{eq:regularity_cond}
\end{align*}
for all matrices $\BZ$ that are sufficiently close to $\BY$.
The notation of regularity condition was first introduced in \cite{candes2015phase} to show the convergence of a non-convex gradient descent algorithm for phase retrieval and since then has been extended to many other problems, see \cite{FGD} and references therein. Once \eqref{eq:regularity_cond} is established, a little algebra yields
\begin{align*}
\distsq{\TBZ}{\BY}&=\ln\TBZ-\BY\BR_{\TSZ}\rn_F^2\leq \ln\TBZ-\BY\BR_{\SZ}\rn_F^2\\
&=\ln\BH\rn_F^2+\eta^2\ln\nabla F(\BZ)\rn_F^2-2\eta\Real\la\nabla F(\BZ),\BH\ra\\
&\leq (1-\eta\nu)\ln\BH\rn_F^2\\
&=(1-\eta\nu)\distsq{\BZ}{\BY}.
\end{align*}

The proof of the regularity condition will occupy the remainder of this section. Even though the proof follows a well-established route, especially that in \cite{FGD},  the details of the proof are nevertheless quite involved and technical. Firstly, our objective function involves a transformation from the matrix domain to the vector domain, and an extra regularizer is also included to preserve the Hankel structure of the matrix. Secondly,  we need to establish  a key lemma which is closely related to the second largest eigenvalue of a special random graph, as presented in the next subsection.
\subsection{A Key Ingredient}
The following lemma will play a key role in the proof of the regularity condition.
\begin{lemma}\label{lem:key}
Suppose $\Omega=\{a_k\}_{k=1}^m$, where each $a_k$ is sampled from $\{0,\cdots,n-1\}$ independently and uniformly with replacement. Then for all $\bz\in\R^{n_1}$ and $\bw\in\R^{n_2}$,
\begin{align*}
p^{-1}\sum_{k=1}^{m}\sum_{i+j=a_k}z_iw_j\leq\ln\bz\rn_1\ln\bw\rn_1+\sqrt{\frac{24n\log(n)}{p}}\ln\bz\rn_2\ln\bw\rn_2
\end{align*}
holds with probability at least $1-2n^{-2}$ provided  $m\geq \frac{8}{3}\log(n)$.
\end{lemma}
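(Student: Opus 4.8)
The plan is to rewrite the bilinear sum as a quadratic form against a random matrix, which turns the supremum over all $\bz,\bw$ into a single spectral‑norm estimate. For $a\in\{0,\dots,n-1\}$ let $\bm{A}^{(a)}\in\R^{n_1\times n_2}$ be the $0/1$ matrix with $[\bm{A}^{(a)}]^{(i,j)}=1$ if and only if $i+j=a$, and set $\bm{N}=\sum_{k=1}^{m}\bm{A}^{(a_k)}$. Then
\[
\sum_{k=1}^{m}\sum_{i+j=a_k}z_iw_j=\bz^{T}\bm{N}\bw,
\]
and since every value $i+j$ with $i\in[n_1]$, $j\in[n_2]$ lies in $\{0,\dots,n-1\}$ we have $\sum_{a}\bm{A}^{(a)}=\bone\bone^{T}$ (the all-ones matrix), hence $\E\bm{N}=p\,\bone\bone^{T}$. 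Writing $\bz^{T}\bm{N}\bw=p(\bone^{T}\bz)(\bone^{T}\bw)+\bz^{T}(\bm{N}-\E\bm{N})\bw$ and bounding $(\bone^{T}\bz)(\bone^{T}\bw)\le\ln\bz\rn_{1}\ln\bw\rn_{1}$ together with $\bz^{T}(\bm{N}-\E\bm{N})\bw\le\ln\bm{N}-\E\bm{N}\rn_{2}\ln\bz\rn_{2}\ln\bw\rn_{2}$, the lemma reduces to the vector-free statement $\ln\bm{N}-\E\bm{N}\rn_{2}\le\sqrt{24\,m\log(n)}$ holding with probability at least $1-2n^{-2}$: indeed $p^{-1}\sqrt{24\,m\log(n)}=\sqrt{24\,n\log(n)/p}$ because $m=pn$, and this event does not depend on $\bz,\bw$, which yields the uniformity.

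For the spectral bound I would apply the matrix Bernstein inequality to $\bm{N}-\E\bm{N}=\sum_{k=1}^{m}\bm{X}_k$ with the i.i.d.\ mean-zero summands $\bm{X}_k=\bm{A}^{(a_k)}-\tfrac1n\bone\bone^{T}$. The crucial structural observation is that each $\bm{A}^{(a)}$ is a partial permutation matrix: the relation $i+j=a$ determines $j$ from $i$ and vice versa, so every row and every column of $\bm{A}^{(a)}$ contains at most one $1$. Consequently $\ln\bm{A}^{(a)}\rn_{2}\le1$, whence $\ln\bm{X}_k\rn_{2}\le1+\sqrt{n_1n_2}/n\le2$; moreover $\bm{A}^{(a)}(\bm{A}^{(a)})^{*}$ and $(\bm{A}^{(a)})^{*}\bm{A}^{(a)}$ are $0/1$ diagonal matrices, so after averaging over $a$ and discarding the positive semidefinite rank-one correction one obtains $\ln\E[\bm{X}_k\bm{X}_k^{*}]\rn_{2}\le n_2/n\le1$ and $\ln\E[\bm{X}_k^{*}\bm{X}_k]\rn_{2}\le n_1/n\le1$. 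Thus one may take uniform bound $L\le2$, variance parameter $\sigma^{2}\le m$, and ambient dimension $n_1+n_2=n+1$ in matrix Bernstein.

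Taking $s=\sqrt{24\,m\log(n)}$, the task is to force the exponent $\tfrac{s^{2}/2}{\sigma^{2}+Ls/3}$ to be at least $4\log(n)$; this is exactly where the hypothesis $m\ge\tfrac83\log(n)$ enters, since it gives $\log(n)\le\tfrac38 m$, hence $\tfrac23\sqrt{24\,m\log(n)}\le 2m$ and $\sigma^{2}+Ls/3\le 3m$, so the exponent is at least $\tfrac{12m\log(n)}{3m}=4\log(n)$ and the failure probability is at most $(n+1)n^{-4}\le2n^{-2}$. Combined with the reduction above this proves the lemma.

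The argument is short once the reduction is set up, so I do not expect a serious obstacle; the only places needing care are the second-moment bookkeeping (verifying that $\E[\bm{A}^{(a)}(\bm{A}^{(a)})^{*}]$ is diagonal with entries at most $n_2/n$, and the transpose statement) and tracking the absolute constants so that the factor $\sqrt{24}$ comes out exactly under $m\ge\tfrac83\log(n)$. An alternative would be an $\varepsilon$-net over the unit spheres of $\R^{n_1}$ and $\R^{n_2}$ with a scalar Bernstein bound for each fixed pair $(\bz,\bw)$, but this weakens the constants, so the spectral-norm route above is preferable; I also note that $\ln\bm{N}-\E\bm{N}\rn_{2}$ being small is precisely the expander-mixing/``second-largest-eigenvalue'' property of the random multigraph encoded by $\bm{N}$, which is the phenomenon alluded to just before the lemma.
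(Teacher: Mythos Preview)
Your proposal is correct and follows essentially the same approach as the paper: rewrite the bilinear sum as $\bz^{T}\bm{N}\bw$, split off the mean $p\,\bone\bone^{T}$ to get the $\ell_1$ term, and control $\ln\bm{N}-\E\bm{N}\rn_{2}$ via matrix Bernstein applied to the i.i.d.\ summands $\bm{A}^{(a_k)}-\tfrac{1}{n}\bone\bone^{T}$. The paper's proof is the same argument with the summands rescaled by $n/m$ (its $\BZ_k$ equals $\tfrac{n}{m}\bm{X}_k$), and your second-moment and constant bookkeeping is in fact slightly tighter than the paper's but leads to the identical final bound.
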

\begin{proof}
Let $\BHH_a,~a=0,\cdots,n-1$, be  an $n_1\times n_2$ matrix with the $a$-th skew-diagonal entries being equal to one and all the other entries being equal to zero.
Notice that $p^{-1}\sum_{k=1}^m\sum_{i+j=a_k}z_iw_j$ can be written as 
\begin{align*}
p^{-1}\sum_{k=1}^m\sum_{i+j=a_k}z_iw_j &= p^{-1}\sum_{k=1}^m\bz^T\bm{H}_{a_k}\bw=\bz^T\lb\frac{n}{m}\sum_{k=1}^m\bm{H}_{a_k}\rb\bw\\
&=\bz^T\lb\bone_{n_1}\bone_{n_2}^T\rb\bw+\bz^T\lb\frac{n}{m}\sum_{k=1}^m\bm{H}_{a_k}-\bone_{n_1}\bone_{n_2}^T\rb\bw\\
&\leq \ln\bz\rn_1\ln\bw\rn_1+\ln \sum_{k=1}^m\lb \frac{n}{m}\bm{H}_{a_k}-\frac{1}{m}\bone_{n_1}\bone_{n_2}^T\rb\rn_2\ln\bz\rn_2\ln\bw\rn_2.
\numberthis\label{eq:key_eq1}
\end{align*}
Let $\BZ_k= \frac{n}{m}\bm{H}_{a_k}-\frac{1}{m}\bone_{n_1}\bone_{n_2}^T$. One can easily see that $\mean{\BZ_k}=0$ and 
\begin{align*}
\ln\BZ_k\rn_2\leq \ln\frac{n}{m}\bm{H}_{a_k}\rn_2+\ln\frac{1}{m}\bone_{n_1}\bone_{n_2}^T\rn_2\leq\frac{2n}{m}.
\end{align*}
Moreover, one has 
\begin{align*}
\mean{\BZ_k\BZ_k^T} &= \mean{\lb\frac{n}{m}\bm{H}_{a_k}-\frac{1}{m}\bone_{n_1}\bone_{n_2}^T\rb\lb\frac{n}{m}\bm{H}_{a_k}^T-\frac{1}{m}\bone_{n_2}\bone_{n_1}^T\rb}\\
&=\frac{n^2}{m^2}\mean{\bm{H}_{a_k}\bm{H}_{a_k}^T}-\frac{n}{m^2}\lb\bone_{n_1}\bone_{n_2}^T\rb\mean{\bm{H}_{a_k}}
-\frac{n}{m^2}\mean{\bm{H}_{a_k}}\lb\bone_{n_2}\bone{n_1}^T\rb+\frac{n_2}{m^2}\bone_{n_1}\bone_{n_1}^T\\
&=\frac{n}{m^2}\sum_{a=1}^{n-1}\bm{H}_a\bm{H}_a^T-\frac{n_2}{m^2}\bone_{n_1}\bone_{n_1}^T\\
&=\frac{n_2}{m^2}\lb n\BI_{n_1}-\bone_{n_1}\bone_{n_1}^T\rb,
\end{align*}
so $\ln\mean{\BZ_k\BZ_k^T}\rn_2\leq \frac{2n^2}{m^2}$. Similarly, one also has $\ln\mean{\BZ_k^T\BZ_k}\rn_2\leq \frac{2n^2}{m^2}$. Consequently, 
\begin{align*}
\max\lcb\ln\sum_{k=1}^m\mean{\BZ_k\BZ_k^T}\rn_2,\ln\sum_{k=1}^m\mean{\BZ_k^T\BZ_k}\rn_2\rcb\leq \frac{2n^2}{m}.
\end{align*}
Thus, the application of the Bernstein's inequality (see for example \cite[Theorem~1.6]{Tropp}) yields
\begin{align*}
\prob{\ln\sum_{k=1}^m\BZ_k\rn_2>t}\leq(n_1+n_2)\exp\lb\frac{-t^2/2}{2n^2/m+2nt/3m}\rb.
\end{align*}
Letting $t=\sqrt{\frac{24n^2\log(n)}{m}}$ gives 
\begin{align*}
\prob{\ln\sum_{k=1}^m\BZ_k\rn_2>t}\leq 2n^{-2}
\end{align*}
provided $m\geq\frac{8}{3}\log(n)$. Substituting this result into \eqref{eq:key_eq1} concludes the proof. 
\end{proof}
\begin{remark*}{\normalfont
Suppose $n$ is odd and $n_1=n_2=(n+1)/2$. Let $\BH$ be an $n_1\times n_1$ random Hankel matrix, each skew-diagonal of which takes the value $1$ with probability $p$ and the value $0$ with probability $1-p$. Then  $\BH$ can be viewed as the adjacency matrix corresponding  a special random graph. Without rigorous justification, we can see that the largest eigenvalue of $\BH$, denoted $\lambda_1$, is   of order about $n_1p$ as $\mean{\BH}=p\bone_{n_1}\bone_{n_1}^T$.  Let $\lambda_2$ be the second largest (in magnitude) eigenvalue of $\BH$. Roughly speaking, Lemma~\ref{lem:key}  says that $|\lambda_2|\approx \sqrt{n_1p\log(n_1)}$ since $|\lambda_2|$ can be approximated by $\ln\BH-\bone_{n_1}\bone_{n_1}^T\rn_2$. Let $\BG$ be an $n_1\times n_1$ adjacency matrix of a random graph with $n_1$ vertex and  every edge of which is connected with probability $p$. That is, each entry of $\BG$ takes the value  $1$ with probability $p$ and the value $0$ with probability $1-p$. It was shown in \cite{Geig_kw} the second largest (in magnitude) eigenvalue of $\BG$ is of order at most $\sqrt{n_1p}$, which has also been extended to singular values in \cite{keshavan2010matrix}. Thus,  our analysis looses a $\sqrt{\log(n_1)}$ factor compared to the result for $\BG$. However, we want to emphasize that the extra $\sqrt{\log(n)}$ factor in Lemma~\eqref{lem:key} does not affect our final result as a log factor will also appear in other place. That being said, we conjecture that the extra $\sqrt{\log(n)}$ factor for $\BH$ is just an artifact of our analysis framework which uses the Bernstein's inequality under the sampling with replacement model, and it can be eliminated  by the spectral techniques used in \cite{Geig_kw} under the Bernoulli model. We leave this for future work.}\end{remark*}
\subsection{Proof of the Regularity Condition}
The goal of this subsection is  to show that the regularity condition \eqref{eq:regularity_cond} holds with high probability. Before proceeding to the formal proof, we first 
consider the expectation of ${\Real\la\nabla F(\BZ),\BH\ra}$ and see what lower bound can be anticipated.
With a slight abuse of notation, we denote $\BY\BR_{\SZ}$ by $\BY$ throughout this subsection for ease of presentation. Since there exists a close solution for $\BR_{\SZ}$, as presented in \eqref{eq:R_form}, one can easily verify that 
\begin{align*}\BH^*\BY=\BY^*\BH\quad\mbox{and}\quad\BY^*\BZ=\BZ^*\BY\succeq 0.\numberthis\label{eq:H_prop}
\end{align*}

By noting that $\mean{p^{-1}\P_\Omega}=\I$, the expectation of $\mean{\Real\la\nabla f(\BZ),\BH\ra}$ can be  bounded  below as
\begin{align*}
&\mean{\Real\la\nabla f(\BZ),\BH\ra}\\ &= \Real\la\BZ_\SU\BZ_\SV^*-\BY_\SU\BY_\SV^*,\BH_\SU\BZ_\SV^*+\BZ_\SU\BH_\SV^*\ra\\
&=\Real\la \BY_{\SU}\BH_{\SV}^*+\BH_{\SU}\BY_{\SV}^*+\BH_{\SU}\BH_{\SV}^*,\BY_{\SU}\BH_{\SV}^*+\BH_{\SU}\BY_{\SV}^*+2\BH_{\SU}\BH_{\SV}^*\ra\\
&=\ln \BY_{\SU}\BH_{\SV}^*+\BH_{\SU}\BY_{\SV}^*\rn_F^2+3\Real\la \BY_{\SU}\BH_{\SV}^*+\BH_{\SU}\BY_{\SV}^*, \BH_\SU\BH_\SV^*\ra+2\ln\BH_\SU\BH_\SV^*\rn_F^2\\
&\geq \ln \BY_{\SU}\BH_{\SV}^*+\BH_{\SU}\BY_{\SV}^*\rn_F^2-3\ln \BY_{\SU}\BH_{\SV}^*+\BH_{\SU}\BY_{\SV}^*\rn\ln\BH_\SU\BH_\SV^*\rn+2\ln\BH_\SU\BH_\SV^*\rn_F^2\\
&\geq \frac{1}{2}\ln \BY_{\SU}\BH_{\SV}^*+\BH_{\SU}\BY_{\SV}^*\rn_F^2-\frac{5}{2}\ln\BH_\SU\BH_\SV^*\rn_F^2\\
&=\frac{1}{2}\lb\ln\BY_\SU\BH_\SV^*\rn_F^2+\ln \BH_{\SU}\BY_{\SV}^*\rn_F^2\rb-\frac{5}{2}\ln\BH_\SU\BH_\SV^*\rn_F^2+\Real\la\BH_{\SU}^*\BY_\SU, \BY_{\SV}^*\BH_\SV\ra,\numberthis\label{eq:mean_f}
\end{align*}
where in the second line we use $\BZ=\BY+\BH$, and in the third line we use the inequality $a^2-3ab+2b^2\geq \frac{1}{2}a^2-\frac{5}{2}b^2$. 

Before continuing to bound $\mean{\Real\la\nabla F(\BZ),\BH\ra}$ by adding $\lambda\Real\la\nabla g(\BZ),\BH\ra$ to $\mean{\Real\la\nabla f(\BZ),\BH\ra}$, it might be better to  examine the role of $g(\BZ)$ by studying a special case.
Suppose $\BH_\SU=\delta\cdot\BY_\SU$ and $\BH_\SV=-\delta\cdot\BY_\SV$, where $\delta>0$ is a small numerical constant. 
Then one has
\begin{align*}
\mean{\Real\la\nabla f(\BZ),\BH\ra}=2\ln\BH_\SU\BH_\SV^*\rn_F^2=2\delta^4\ln\BY_\SU\BY_\SV^*\rn_F^2=2\delta^4\ln\BS\rn_F^2,\end{align*}
where the last equality follows from the fact $\BY_\SU\BY_\SV^*=\BU\BS\BV^*$.
Since $\ln\BH\rn_F^2=\delta^2\ln\BY\rn_F^2=2\delta^2\ln\BS\rn_*$, the regularity condition \eqref{eq:regularity_cond} cannot be true for $f(\BZ)$ without the regularization function $g(\BZ)$. In this case, one can observe that the mismatch between $\BZ_{\SU}^*\BZ_{\SU}$ and  $\BZ_{\SV}^*\BZ_{\SV}$ increases compared with the mismatch between $\BY_{\SU}^*\BY_{\SU}$ and  $\BY_{\SV}^*\BY_{\SV}$ which is equal to zero. Because $g(\BZ)$ penalizes the mismatch between  $\BZ_{\SU}^*\BZ_{\SU}$ and  $\BZ_{\SV}^*\BZ_{\SV}$, one may intuitively expect that it can control the occurrence of this case so that $F(\BZ)=f(\BZ)+\lambda g(\BZ)$ could obey the regularity condition.

Let $\BD = \begin{bmatrix}\BI_{n_1}&\bzero\\\bzero&-\BI_{n_2}\end{bmatrix}$. 
We can  bound $\Real\la\nabla g(\BZ),\BH\ra$ from below as 
\begin{align*}
&\Real\la\nabla g(\BZ),\BH\ra\\  &= \Real\la\BD\BZ(\BZ^*\BD\BZ),\BH\ra=\Real\la\BZ^*\BD\BZ,\BZ^*\BD\BH\ra\\
&=\Real\la \BY^*\BD\BH+\BH^*\BD\BY+\BH^*\BD\BH, \BY^*\BD\BH+\BH^*\BD\BH\ra\\
&=\ln \BY^*\BD\BH\rn_F^2+3\Real\la\BY^*\BD\BH,\BH^*\BD\BH\ra+\ln\BH^*\BD\BH\rn_F^2+\Real\la \BY^*\BD\BH,\BH^*\BD\BY\ra\\
&=\frac{1}{2}\ln \BY^*\BD\BH\rn_F^2+\frac{1}{2}\ln \BY^*\BD\BH+3\BH^*\BD\BH\rn_F^2-\frac{7}{2}\ln\BH^*\BD\BH\rn_F^2\\
&\quad+\Real\la \BY^*\BD\BH,\BH^*\BD\BY\ra\\
&=\frac{1}{2}\ln \BY^*\BD\BH\rn_F^2+\frac{1}{2}\ln \BY^*\BD\BH+3\BH^*\BD\BH\rn_F^2-\frac{7}{2}\ln\BH^*\BD\BH\rn_F^2\\
&\quad+\Real\la\BY^*\BH,\BH^*\BY\ra-4\Real\la\BH_{\SU}^*\BY_\SU, \BY_{\SV}^*\BH_\SV\ra\\
&\geq \frac{1}{2}\ln \BY^*\BD\BH\rn_F^2-\frac{7}{2}\ln\BH^*\BD\BH\rn_F^2\\
&\quad-4\Real\la\BH_{\SU}^*\BY_\SU, \BY_{\SV}^*\BH_\SV\ra,\numberthis\label{eq:mean_g}
\end{align*}
where the third equality  follows from $\BY^*\BD\BY=\bzero$, the fourth equality follows from
\begin{align*}
\Real\la \BH^*\BD\BY,\BH^*\BD\BH\ra=\Real\la \BY^*\BD\BH,\BH^*\BD\BH\ra=\Real\la \BH^*\BD\BH,\BY^*\BD\BH\ra,
\end{align*}
 the last equality follows from 
\begin{align*}
\Real\la\BH_{\SU}^*\BY_\SU, \BY_{\SV}^*\BH_\SV\ra = \Real\la\BY_{\SV}^*\BH_\SV,\BH_{\SU}^*\BY_\SU\ra=\Real\la\BY_\SU^*\BH_\SU,\BH_\SV^*\BY_\SV\ra,
\end{align*}
and the inequality follows from $\BH^*\BY=\BY^*\BH$, see \eqref{eq:H_prop}.

If we take $\lambda=\frac{1}{4}$, then combining \eqref{eq:mean_f} and \eqref{eq:mean_g} together implies 
\begin{align*}
\mean{\Real\la\nabla F(\BZ),\BH\ra}&\geq \frac{1}{2}\lb\ln\BY_\SU\BH_\SV^*\rn_F^2+\ln \BH_{\SU}\BY_{\SV}^*\rn_F^2\rb-\frac{5}{2}\ln\BH_\SU\BH_\SV^*\rn_F^2-\frac{7}{8}\ln\BH^*\BD\BH\rn_F^2\\
&\quad+\frac{1}{8}\ln\BY^*\BD\BH\rn_F^2\\
&\gtrsim\lb \sigma_r(\G\by)-\ln\BH\rn_F^2\rb\ln\BH\rn_F^2+\ln\BY^*\BD\BH\rn_F^2.\numberthis\label{eq:mean_lower}
\end{align*}
That is, we have established a lower bound for the expectation of $\Real\la\nabla F(\BZ),\BH\ra$. As we will show later, $\Real\la\nabla F(\BZ),\BH\ra$ obeys a similar lower bound with high probability. Moreover, the right hand side of \eqref{eq:regularity_cond} can be bounded from above by a similar bound. Therefore, $F(\BZ)$ obeys the regularity condition for sufficiently small $\BH$. Specifically, we are going to show  the following two bounds,
\begin{align*}
&\Real\la\nabla F(\BZ),\BH\ra\geq \frac{1}{10}\sigma_r(\G\by)\ln\BH\rn_F^2+\frac{1}{8}\ln\BY^*\BD\BH\rn_F^2\numberthis\label{eq:local_cur},\\
&\ln\nabla F(\BZ)\rn_F^2\leq60(\mu c_s r)^2 \sigma_1^2(\G\by)\ln\BH\rn_F^2+\frac{1}{2}\sigma_1(\G\by)\ln\BY^*\BD\BH\rn_F^2,\numberthis\label{eq:local_smooth}
\end{align*}
hold with high probability provided \kw{$\ln\BH\rn_F^2\leq 3\varepsilon_0^2\sigma_r(\G\by)$ and $m\gtrsim\varepsilon_0^{-2}\mu^2 c_s^2\kappa^2r^2\log(n)$} for $\varepsilon_0\leq \frac{1}{11}$. The above two inequalities are typically referred to as the {\em local curvature property} and the {\em local smooth property} of the function $F(\BZ)$ in the literature, see for example \cite{candes2015phase,FGD}. Once they are established,  one can easily see that $F(\BZ)$ obeys the regularity condition \eqref{eq:regularity_cond} with $$\kw{\eta\leq\frac{\sigma_r(\G\by)}{600(\mu c_sr)^2\sigma_1^2(\G\by)}\quad\mbox{and}\quad\nu=\frac{1}{10}\sigma_r(\G\by)}.$$
\subsubsection{Proof of \eqref{eq:local_cur}}
Since $\Real\la \nabla g(\BZ),\BH\ra$ is deterministic and we have already obtained its lower bound in \eqref{eq:mean_g}, it only remains  to  work out the lower bound for $\Real\la \nabla f(\BZ),\BH\ra$ and then combine it together with that for $\Real\la \nabla g(\BZ),\BH\ra$. Note that 
\begin{align*}
&\Real\la \nabla f(\BZ),\BH\ra \\& = \Real\la (\I-\G\G^*)(\BZ_\SU\BZ_\SV^*)+p^{-1}\G\P_\Omega\G^*(\BZ_\SU\BZ_\SV^*-\BY_\SU\BY_\SV^*),\BH_\SU\BZ_\SV^*+\BZ_\SU\BH_\SV^*\ra\\
&=\Real\la (\I-\G\G^*)(\BZ_\SU\BZ_\SV^*-\BY_\SU\BY_\SV^*)+p^{-1}\G\P_\Omega\G^*(\BZ_\SU\BZ_\SV^*-\BY_\SU\BY_\SV^*),\BH_\SU\BZ_\SV^*+\BZ_\SU\BH_\SV^*\ra\\
&=\Real\la(\I-\G\G^*)(\BH_\SU\BY_\SV^*+\BY_\SU\BH_\SV^*+\BH_\SU\BH_\SV^*),\BH_\SU\BY_\SV^*+\BY_\SU\BH_\SV^*+2\BH_\SU\BH_\SV^*\ra\\
&\quad+\Real\la p^{-1}\G\P_\Omega\G^*(\BH_\SU\BY_\SV^*+\BY_\SU\BH_\SV^*+\BH_\SU\BH_\SV^*),\BH_\SU\BY_\SV^*+\BY_\SU\BH_\SV^*+2\BH_\SU\BH_\SV^*\ra\\
&:=I_1+I_2\numberthis\label{eq:grad_f},
\end{align*}
where the second equality follows from the fact $(\I-\G\G^*)(\BY_\SU\BY_\SV^*)=\bzero$. 

\textbf{Lower bound for $I_1$.}  The first term $I_1$ can be bounded directly as follows:
\begin{align*}
I_1 &= \Real\la(\I-\G\G^*)(\BH_\SU\BY_\SV^*+\BY_\SU\BH_\SV^*+\BH_\SU\BH_\SV^*),(\I-\G\G^*)(\BH_\SU\BY_\SV^*+\BY_\SU\BH_\SV^*+2\BH_\SU\BH_\SV^*)\ra\\
&\geq \ln (\I-\G\G^*)(\BH_\SU\BY_\SV^*+\BY_\SU\BH_\SV^*)\rn_F^2\\
&\quad-3\ln (\I-\G\G^*)(\BH_\SU\BY_\SV^*+\BY_\SU\BH_\SV^*)\rn_F\ln (\I-\G\G^*)(\BH_\SU\BH_\SV^*)\rn_F+2\ln (\I-\G\G^*)(\BH_\SU\BH_\SV^*)\rn_F^2\\
&\geq \frac{11}{20}\ln (\I-\G\G^*)(\BH_\SU\BY_\SV^*+\BY_\SU\BH_\SV^*)\rn_F^2-3\ln (\I-\G\G^*)(\BH_\SU\BH_\SV^*)\rn_F^2,
\end{align*}
where the first equality follows from that $\G\G^*$ is a projection operator, and the second inequality follows from $a^2-3ab+2b^2\geq \frac{11}{20}a^2-3b^2$.

\textbf{Lower bound for $I_2$.} Recall from Section~\ref{sec:alg:hankel} that $w_a$, $a=0,\cdots,n-1$, denotes the number of entries in the skew-diagonal of an $n_1\times n_2$ matrix. Let $\BG_a,~a=0,\cdots,n-1$, be  an $n_1\times n_2$ matrix with the $a$-th skew-diagonal entries being equal to $1/\sqrt{w_a}$ and all the other entries being equal to zero. Then,
\begin{align*}
\G^*(\BH_\SU\BY_\SV^*+\BY_\SU\BH_\SV^*) = \lcb\la\BG_{a}, \BH_\SU\BY_\SV^*+\BY_\SU\BH_\SV^*\ra\rcb_{a=0}^{n-1}
\end{align*}
and 
\begin{align*}
\G^*(\BH_\SU\BH_\SV^*) = \lcb\la\BG_a,\BH_\SU\BH_\SV^*\ra\rcb_{a=0}^{n-1}.
\end{align*}
It follows that 
\begin{align*}
&\Real\la \G\P_\Omega\G^*(\BH_\SU\BY_\SV^*+\BY_\SU\BH_\SV^*+\BH_\SU\BH_\SV^*),\BH_\SU\BY_\SV^*+\BY_\SU\BH_\SV^*+2\BH_\SU\BH_\SV^*\ra\\
&=\Real\la\P_\Omega\G^*(\BH_\SU\BY_\SV^*+\BY_\SU\BH_\SV^*+\BH_\SU\BH_\SV^*),\G^*(\BH_\SU\BY_\SV^*+\BY_\SU\BH_\SV^*+2\BH_\SU\BH_\SV^*)\ra\\
&=\la \P_\Omega\G^*(\BH_\SU\BY_\SV^*+\BY_\SU\BH_\SV^*),\G^*(\BH_\SU\BY_\SV^*+\BY_\SU\BH_\SV^*) \ra\\
&\quad+3\Real\la \P_\Omega\G^*(\BH_\SU\BY_\SV^*+\BY_\SU\BH_\SV^*),\G^*(\BH_\SU\BH_\SV^*)\ra+2\la \P_\Omega\G^*(\BH_\SU\BH_\SV^*),\G^*(\BH_\SU\BH_\SV^*)\ra\\
&=\sum_{k=1}^m\lab\la \BG_{a_k},\BH_\SU\BY_\SV^*+\BY_\SU\BH_\SV^*\ra\rab^2+3\Real\lb\sum_{k=1}^m\overline{\la \BG_{a_k},\BH_\SU\BY_\SV^*+\BY_\SU\BH_\SV^*\ra}{\la \BG_{a_k},\BH_\SU\BH_\SV^*\ra}\rb\\
&\quad+2\sum_{k=1}^m\lab\la \BG_{a_k},\BH_\SU\BH_\SV^*\ra\rab^2\\
&\geq \sum_{k=1}^m\lab\la \BG_{a_k},\BH_\SU\BY_\SV^*+\BY_\SU\BH_\SV^*\ra\rab^2-3\sqrt{\sum_{k=1}^m\lab\la \BG_{a_k},\BH_\SU\BY_\SV^*+\BY_\SU\BH_\SV^*\ra\rab^2}\sqrt{\sum_{k=1}^m\lab\la \BG_{a_k},\BH_\SU\BH_\SV^*\ra\rab^2}\\
&\quad+2\sum_{k=1}^m\lab\la \BG_{a_k},\BH_\SU\BH_\SV^*\ra\rab^2\\
&\geq \frac{11}{20} \sum_{k=1}^m\lab\la \BG_{a_k},\BH_\SU\BY_\SV^*+\BY_\SU\BH_\SV^*\ra\rab^2-3\sum_{k=1}^m\lab\la \BG_{a_k},\BH_\SU\BH_\SV^*\ra\rab^2\\
&=\frac{11}{20}\la \P_\Omega\G^*(\BH_\SU\BY_\SV^*+\BY_\SU\BH_\SV^*),\G^*(\BH_\SU\BY_\SV^*+\BY_\SU\BH_\SV^*) \ra-3\sum_{k=1}^m\lab\la \BG_{a_k},\BH_\SU\BH_\SV^*\ra\rab^2,
\end{align*}
where the third equality and the last equality follow from \eqref{eq:tmp112}, the first inequality follows from the H\"older inequality, and the second inequality follows from $a^2-3ab+2b^2\geq \frac{11}{20}a^2-3b^2$. Consequently,
\begin{align*}
I_2&\geq \frac{11}{20}\la p^{-1}\P_\Omega\G^*(\BH_\SU\BY_\SV^*+\BY_\SU\BH_\SV^*),\G^*(\BH_\SU\BY_\SV^*+\BY_\SU\BH_\SV^*) \ra\\
&\quad-3p^{-1}\sum_{k=1}^m\lab\la \BG_{a_k},\BH_\SU\BH_\SV^*\ra\rab^2.\numberthis\label{eq:grad_f_i2}
\end{align*} 
We can bound $p^{-1}\sum_{k=1}^m\lab\la \BG_{a_k},\BH_\SU\BH_\SV^*\ra\rab^2$ from above by  Lemma~\ref{lem:key} as follows:
\begin{align*}
&p^{-1}\sum_{k=1}^m\lab\la \BG_{a_k},\BH_\SU\BH_\SV^*\ra\rab^2\\&=p^{-1}\sum_{k=1}^m\lab\frac{1}{\sqrt{w_{a_k}}}\sum_{i+j=a_k}\la \be_i\be_j^T,\BH_\SU\BH_\SV^*\ra\rab^2&\\
&\leq p^{-1}\sum_{k=1}^m\sum_{i+j=a_k}\lab\la \be_i\be_j^T,\BH_\SU\BH_\SV^*\ra\rab^2\\
&\leq p^{-1}\sum_{k=1}^m\sum_{i+j=a_k}\ln\BH_\SU^{(i,:)}\rn_2^2\ln\BH_\SV^{(j,:)}\rn_2^2\\
&\leq \ln\BH_\SU\rn_F^2\ln\BH_\SV\rn_F^2+\sqrt{\frac{24n\log(n)}{p}}\sqrt{\sum_{i=1}^{n_1}\ln\BH_\SU^{(i,:)}\rn_2^4}\sqrt{\sum_{j=1}^{n_2}\ln\BH_\SV^{(j,:)}\rn_2^4}\\
&\leq \ln\BH_\SU\rn_F^2\ln\BH_\SV\rn_F^2+\sqrt{\frac{24n\log(n)}{p}}\lb\ln\BH_\SU\rn_{2,\infty}\ln\BH_\SU\rn_F\rb\lb\ln\BH_\SV\rn_{2,\infty}\ln\BH_\SV\rn_F\rb\\
&\leq \ln\BH_\SU\rn_F^2\ln\BH_\SV\rn_F^2+\sqrt{\frac{24n\log(n)}{p}}\lb\frac{4\mu c_sr}{n}\sigma\rb\ln\BH_\SU\rn_F\ln\BH_\SV\rn_F\\
&\leq \frac{1}{4}\ln\BH\rn_F^4+\sqrt{\frac{96\mu^2 c_s^2r^2\log(n)}{m}}\frac{\sigma_1(\BL^0)}{1-\varepsilon_0}\ln\BH\rn_F^2\\
&\leq \lb\frac{3\varepsilon_0^2}{4}+\frac{\varepsilon_0(1+\varepsilon_0)}{1-\varepsilon_0}\rb\sigma_r(\G\by)\ln\BH\rn_F^2,
\end{align*}
where the fourth line follows from Lemma~\ref{lem:key},  the sixth line follows from 
\begin{align*}
\max\lcb\ln\BH_{\SU}\rn_{2,\infty},\ln\BH_{\SV}\rn_{2,\infty}\rcb=\ln\BH\rn_{2,\infty}\leq\ln\BY\rn_{2,\infty}+\ln\BZ\rn_{2,\infty}\leq 2\sqrt{\frac{\mu c_sr}{n}\sigma},
\end{align*}
and the last line follows from  \eqref{eq:initial_eq1} and the assumptions on $\ln\BH\rn_F^2$ and $m$.

\textbf{Lower bound for $\Real\la \nabla f(\BZ),\BH\ra$.} 
Before finally showing the lower bound for $\Real\la \nabla f(\BZ),\BH\ra$, we need to define the tangent space of the rank $r$ matrix manifold at $\G\by$, denoted $T$. Given the SVD $\G\by=\BU\BS\BV^*$,  we define $T$ as
\begin{align*}
T=\{\BU\BC^*+\BD\BV^*~|~\BC\in\C^{n_2\times r},~\BD\in\C^{n_1\times r}\}.
\end{align*}
One can easily see that $\BH_\SU\BY_\SV^*+\BY_\SU\BH_\SV^*\in T$. 
Substituting the bound for $p^{-1}\sum_{k=1}^m\lab\la \BG_{a_k},\BH_\SU\BH_\SV^*\ra\rab^2$ into \eqref{eq:grad_f_i2} and then combining the lower bounds for $I_1$ and $I_2$ together yields
\begin{align*}
&\Real\la \nabla f(\BZ),\BH\ra\\&\geq \frac{11}{20}\ln (\I-\G\G^*)(\BH_\SU\BY_\SV^*+\BY_\SU\BH_\SV^*)\rn_F^2-3\ln (\I-\G\G^*)(\BH_\SU\BH_\SV^*)\rn_F^2\\
&\quad+\frac{11}{20}\la p^{-1}\P_\Omega\G^*(\BH_\SU\BY_\SV^*+\BY_\SU\BH_\SV^*),\G^*(\BH_\SU\BY_\SV^*+\BY_\SU\BH_\SV^*) \ra\\
&\quad-\lb\frac{9\varepsilon_0^2}{4}+\frac{3\varepsilon_0(1+\varepsilon_0)}{1-\varepsilon_0}\rb\sigma_r(\G\by)\ln\BH\rn_F^2\\
&\geq\frac{11}{20}\ln \BH_\SU\BY_\SV^*+\BY_\SU\BH_\SV^*\rn_F^2- 3\ln \BH_\SU\BH_\SV^*\rn_F^2-\lb\frac{9\varepsilon_0^2}{4}+\frac{3\varepsilon_0(1+\varepsilon_0)}{1-\varepsilon_0}\rb\sigma_r(\G\by)\ln\BH\rn_F^2\\
&\quad -\frac{11}{20}\la \G(\I-p^{-1}\P_{\Omega})\G^*(\BH_\SU\BY_\SV^*+\BY_\SU\BH_\SV^*), \BH_\SU\BY_\SV^*+\BY_\SU\BH_\SV^*\ra\\
&=\frac{11}{20}\ln \BH_\SU\BY_\SV^*+\BY_\SU\BH_\SV^*\rn_F^2-3\ln \BH_\SU\BH_\SV^*\rn_F^2-\lb\frac{9\varepsilon_0^2}{4}+\frac{3\varepsilon_0(1+\varepsilon_0)}{1-\varepsilon_0}\rb\sigma_r(\G\by)\ln\BH\rn_F^2\\
&\quad-\frac{11}{20}\la \P_{T}\G(\I-p^{-1}\P_{\Omega})\G^*\P_T(\BH_\SU\BY_\SV^*+\BY_\SU\BH_\SV^*), \BH_\SU\BY_\SV^*+\BY_\SU\BH_\SV^*\ra\\
&\geq \frac{11}{20}\lb1-\varepsilon_0\rb\ln \BH_\SU\BY_\SV^*+\BY_\SU\BH_\SV^*\rn_F^2-\lb\frac{9\varepsilon_0^2}{2}+\frac{3\varepsilon_0(1+\varepsilon_0)}{1-\varepsilon_0}\rb\sigma_r(\G\by)\ln\BH\rn_F^2\\
&\geq \frac{1}{2}\ln \BH_\SU\BY_\SV^*+\BY_\SU\BH_\SV^*\rn_F^2-\lb\frac{9\varepsilon_0^2}{2}+\frac{3\varepsilon_0(1+\varepsilon_0)}{1-\varepsilon_0}\rb\sigma_r(\G\by)\ln\BH\rn_F^2\\
&\geq \frac{1}{8}\sigma_r(\G\by)\ln\BH\rn_F^2+\Real\la \BH_\SU^*\BY_\SU,\BY_\SV^*\BH_\SV \ra,\numberthis\label{eq:f_lower}
\end{align*}
where the second inequality follows from the fact $\G\G^*$ is a projection operator, the third inequality holds with probability at least $1-n^{-2}$ (see Lemma~\ref{lem:A3}) under the assumption on $m$ and $\ln\BH\rn_F^2$, and the last inequality follows from \kw{$\ln\BH_\SU\BY_\SV^*\rn_F\ge \sigma_r(\BY_{\SV})\ln\BH_\SU\rn_F$,  $\ln\BY_\SU\BH_\SV^*\rn_F\ge \sigma_r(\BY_{\SU})\ln\BH_\SV\rn_F$, and the assumption $\varepsilon_0\leq\frac{1}{11}$}.

\textbf{Lower bound for $\Real\la \nabla F(\BZ),\BH\ra$.} Let $\lambda=\frac{1}{4}$. Combining the  lower bound in \eqref{eq:f_lower} for $\Real\la \nabla f(\BZ),\BH\ra$ and the lower bound  in \eqref{eq:mean_g} for $\Real\la \nabla g(\BZ),\BH\ra$ together gives 
\begin{align*}
\Real\la \nabla F(\BZ),\BH\ra&\geq \frac{1}{8}\sigma_r(\G\by)\ln\BH\rn_F^2-\frac{7}{8}\ln\BH^*\BD\BH\rn_F^2+\frac{1}{8}\ln\BY^*\BD\BH\rn_F^2\\
&\geq \frac{1}{10}\sigma_r(\G\by)\ln\BH\rn_F^2+\frac{1}{8}\ln\BY^*\BD\BH\rn_F^2,
\end{align*}
where the second inequality follows from 
\begin{align*}
\ln\BH^*\BD\BH\rn_F^2\leq \ln\BH\rn_F^4\leq 3\varepsilon_0^2\sigma_r(\G\by)\ln\BH\rn_F^2
\end{align*}
and the assumption $\varepsilon_0\leq\frac{1}{11}$. This concludes the proof of \eqref{eq:local_cur}.
\subsubsection{Proof of \eqref{eq:local_smooth}}
Since \begin{align*}\ln \nabla F(\BZ)\rn_F^2\leq 2\ln \nabla f(\BZ)\rn_F^2+2\lambda^2\ln \nabla g(\BZ)\rn_F^2,\numberthis\label{eq:upper_F}\end{align*} it suffices to bound $\ln \nabla f(\BZ)\rn_F^2$ and $\ln \nabla g(\BZ)\rn_F^2$ separately. 



\textbf{Upper bound for $\ln \nabla g(\BZ)\rn_F^2$.} 
We begin with the upper bound for $\ln \nabla g(\BZ)\rn_F^2$, which can be obtained in a straightforward way,
\begin{align*}
\ln \nabla g(\BZ)\rn_F^2&=\ln\BD\BZ\BZ^*\BD\BZ\rn_F^2=\ln\BD(\BZ\BZ^*-\BY\BY^*)\BD\BZ+\BD\BY\BY^*\BD\BZ\rn_F^2\\
&\leq 2\ln\BD(\BZ\BZ^*-\BY\BY^*)\BD\BZ\rn_F^2+2\ln\BD\BY\BY^*\BD\BZ\rn_F^2\\
&\leq 2\ln\BZ\rn_2^2\ln\BZ\BZ^*-\BY\BY^*\rn_F^2+2\ln\BY\rn_2^2\ln\BY^*\BD(\BY+\BH)\rn_F^2\\
&=2\ln\BZ\rn_2^2\ln\BY\BH^*+\BH\BY^*+\BH\BH^*\rn_F^2+2\ln\BY\rn_2^2\ln\BY^*\BD\BH\rn_F^2\\
&\leq 6\ln\BZ\rn_2^2\lb2\ln\BY\rn_2^2\ln\BH\rn_F^2+\ln\BH\rn_F^4\rb+2\ln\BY\rn_2^2\ln\BY^*\BD\BH\rn_F^2\\
&\leq \kw{6\lb\sqrt{3\varepsilon_0^2\sigma_r(\G\by)}+\sqrt{2\sigma_1(\G\by)}\rb^2\lb4\sigma_1(\G\by)+3\varepsilon_0^2\sigma_r(\G\by)\rb\ln\BH\rn_F^2}\\
&\quad \kw{+4\sigma_1(\G\by)\ln\BY^*\BD\BH\rn_F^2}\\
&\leq 60\sigma_1^2(\G\by)\ln\BH\rn_F^2+4\sigma_1(\G\by)\ln\BY^*\BD\BH\rn_F^2,\numberthis\label{eq:upper_g}
\end{align*}
where the third equality follows from $\BZ=\BY+\BH$ and $\BY^*\BD\BY=\bzero$, the fourth inequality follows from $\ln\BY\rn_2=\sqrt{2\sigma_1(\G\by)}$ and
\begin{align*}
\ln\BZ\rn_2\leq \ln\BZ-\BY\rn_2+\ln\BY\rn_2\leq \ln\BZ-\BY\rn_F+\ln\BY\rn_2,
\end{align*}
and the last line follows from the assumption $\varepsilon_0\leq\frac{1}{11}$.

In order to bound $\ln\nabla f(\BZ)\rn_F^2$, we consider $\lab\la \nabla f(\BZ),\BX\ra\rab^2$ for matrices $\BX=\begin{bmatrix}\BX_\SU&\BX_\SV\end{bmatrix}^T$ with unit Frobenius norm (i.e., $\ln\BX_\SU\rn_F^2+\ln\BX_\SV\rn_F^2=1$).
Note that 
\begin{align*}
\lab\la \nabla f(\BZ),\BX\ra\rab^2&=\lab\la (\I-\G\G^*)(\BZ_\SU\BZ_\SV^*)+p^{-1}\G\P_\Omega\G^*(\BZ_\SU\BZ_\SV^*-\BY_\SU\BY_\SV^*),\BX_\SU\BZ_\SV^*+\BZ_\SU\BX_\SV^*\ra\rab^2\\
&=\lab\la (\I-\G\G^*)(\BZ_\SU\BZ_\SV^*-\BY_\SU\BY_\SV^*)+p^{-1}\G\P_\Omega\G^*(\BZ_\SU\BZ_\SV^*-\BY_\SU\BY_\SV^*),\BX_\SU\BZ_\SV^*+\BZ_\SU\BX_\SV^*\ra\rab^2\\
&\leq2\lab \la(\I-\G\G^*)(\BZ_\SU\BZ_\SV^*-\BY_\SU\BY_\SV^*), \BX_\SU\BZ_\SV^*+\BZ_\SU\BX_\SV^*\ra\rab^2 \\
&\quad+2\lab \la p^{-1}\G\P_\Omega\G^*(\BZ_\SU\BZ_\SV^*-\BY_\SU\BY_\SV^*),\BX_\SU\BZ_\SV^*+\BZ_\SU\BX_\SV^*\ra\rab^2\\
&=2\lab \la(\I-\G\G^*)(\BZ_\SU\BH_\SV^*+\BH_\SU\BY_\SV^*), \BX_\SU\BZ_\SV^*+\BZ_\SU\BX_\SV^*\ra\rab^2 \\
&\quad+2\lab \la p^{-1}\G\P_\Omega\G^*(\BZ_\SU\BH_\SV^*+\BH_\SU\BY_\SV^*),\BX_\SU\BZ_\SV^*+\BZ_\SU\BX_\SV^*\ra\rab^2\\
&:=2\cdot I_3+2\cdot I_4.\numberthis\label{eq:upper_f}
\end{align*}

\textbf{Upper bound for $I_3$. } 
Since
\begin{align*}
&\ln\BZ_\SU\rn_2\leq \ln\BY_\SU\rn_2+\ln\BH_\SU\rn_2\leq \ln\BY_\SU\rn_2+\ln\BH\rn_F\leq (1+\sqrt{3}\varepsilon_0)\sqrt{\sigma_1(\G\by)},\\
&\ln\BZ_\SV\rn_2\leq \ln\BY_\SV\rn_2+\ln\BH_\SV\rn_2\leq \ln\BY_\SU\rn_2+\ln\BH\rn_F\leq (1+\sqrt{3}\varepsilon_0)\sqrt{\sigma_1(\G\by)}.
\end{align*}
one has
\begin{align*}
\ln\BZ_\SU\BH_\SV^*+\BH_\SU\BY_\SV^*\rn_F^2&\leq 2\lb \ln \BZ_\SU\BH_\SV^*\rn_F^2+\ln\BH_\SU\BY_\SV^*\rn_F^2\rb\\
&\leq 2\lb\ln\BZ_\SU\rn_2^2\ln\BH_\SV\rn_F^2+\ln\BY_\SV\rn_2^2\ln\BH_\SU\rn_F^2\rb\\
&\leq 2 (1+\sqrt{3}\varepsilon_0)^2\sigma_1(\G\by)\ln\BH\rn_F^2
\end{align*}
and 
\begin{align*}
\ln\BX_\SU\BZ_\SV^*+\BZ_\SU\BX_\SV^*\rn_F^2&\leq 2\lb\ln \BX_\SU\BZ_\SV^*\rn_F^2+\ln \BZ_\SU\BX_\SV^*\rn_F^2\rb\\
&\leq 2\lb \ln\BZ_\SV\rn_2^2\ln\BX_\SU\rn_F^2+\ln\BZ_\SU\rn_2^2\ln\BX_\SV\rn_F^2\rb\\
&\leq 2 (1+\sqrt{3}\varepsilon_0)^2\sigma_1(\G\by),
\end{align*}
where in the last line we have utilized $\ln\BX_\SU\rn_F^2+\ln\BX_\SV\rn_F^2=1$. Because $\I-\G\G^*$ is a projection operator, $I_3$ can be bounded as follows:
\begin{align*}
I_3&\leq \ln\BZ_\SU\BH_\SV^*+\BH_\SU\BY_\SV^*\rn_F^2\cdot\ln\BX_\SU\BZ_\SV^*+\BZ_\SU\BX_\SV^*\rn_F^2\leq 
4(1+\sqrt{3}\varepsilon_0)^4\sigma_1^2(\G\by)\ln\BH\rn_F^2.
\end{align*}

\textbf{Upper bound for $I_4$.} Notice that 
\begin{align*}
&\lab \la p^{-1}\G\P_\Omega\G^*(\BZ_\SU\BH_\SV^*+\BH_\SU\BY_\SV^*),\BX_\SU\BZ_\SV^*+\BZ_\SU\BX_\SV^*\ra\rab\\
&\leq \lab \la p^{-1}\G\P_\Omega\G^*(\BZ_\SU\BH_\SV^*),\BX_\SU\BZ_\SV^*\ra\rab+\lab \la p^{-1}\G\P_\Omega\G^*(\BZ_\SU\BH_\SV^*),\BZ_\SU\BX_\SV^*\ra\rab\\
&\quad+ \lab \la p^{-1}\G\P_\Omega\G^*(\BH_\SU\BY_\SV^*),\BX_\SU\BZ_\SV^*\ra\rab+\lab \la p^{-1}\G\P_\Omega\G^*(\BH_\SU\BY_\SV^*),\BZ_\SU\BX_\SV^*\ra\rab.\numberthis\label{eq:upper_i4}
\end{align*}
We can bound $\lab \la p^{-1}\G\P_\Omega\G^*(\BZ_\SU\BH_\SV^*),\BX_\SU\BZ_\SV^*\ra\rab$ as follows: 
\begin{align*}
&\lab \la p^{-1}\G\P_\Omega\G^*(\BZ_\SU\BH_\SV^*),\BX_\SU\BZ_\SV^*\ra\rab\\
&=p^{-1}\lab \la \P_\Omega\G^*(\BZ_\SU\BH_\SV^*),\G^*(\BX_\SU\BZ_\SV^*)\ra\rab\\
&\leq p^{-1}\sum_{k=1}^m\lcb\lab\la \BG_{a_k},\BZ_\SU\BH_\SV^*\ra\rab\lab\la\BG_{a_k},\BX_\SU\BZ_\SV^*\ra\rab\rcb\\
&= p^{-1}\sum_{k=1}^m\lcb\lab\frac{1}{\sqrt{w_{a_k}}}\sum_{i+j=a_k}\la\be_i\be_j^T,\BZ_\SU\BH_\SV^*\ra\rab\lab\frac{1}{\sqrt{w_{a_k}}}\sum_{i+j=a_k}\la\be_i\be_j^T,\BX_\SU\BZ_\SV^*\ra\rab\rcb\\
&\leq p^{-1}\sum_{k=1}^m\lcb\frac{1}{\sqrt{w_{a_k}}}\sum_{i+j=a_k}\lab\la\be_i\be_j^T,\BZ_\SU\BH_\SV^*\ra\rab\frac{1}{\sqrt{w_{a_k}}}\sum_{i+j=a_k}\lab\la\be_i\be_j^T,\BX_\SU\BZ_\SV^*\ra\rab\rcb\\
&=p^{-1}\sum_{k=1}^m\lcb\sqrt{\lb\frac{1}{\sqrt{w_{a_k}}}\sum_{i+j=a_k}\lab\la\be_i\be_j^T,\BZ_\SU\BH_\SV^*\ra\rab\rb^2}\sqrt{\lb\frac{1}{\sqrt{w_{a_k}}}\sum_{i+j=a_k}\lab\la\be_i\be_j^T,\BX_\SU\BZ_\SV^*\ra\rab\rb^2}\rcb\\
&\leq p^{-1}\sum_{k=1}^m\lcb \lb \sum_{i+j=a_k}\lab\la\be_i\be_j^T,\BZ_\SU\BH_\SV^*\ra\rab^2\rb^{1/2}\lb \sum_{i+j=a_k}\lab\la\be_i\be_j^T,\BX_\SU\BZ_\SV^*\ra\rab^2\rb^{1/2}\rcb\\
&\leq p^{-1}\sum_{k=1}^m\lcb \lb \sum_{i+j=a_k}\ln\BZ_{\SU}^{(i,:)}\rn_2^2\ln\BH_{\SV}^{(j,:)}\rn_2^2\rb^{1/2}\lb \sum_{i+j=a_k}\ln\BX_{\SU}^{(i,:)}\rn_2^2\ln\BZ_{\SV}^{(j,:)}\rn_2^2\rb^{1/2}\rcb\\
&\leq p^{-1}\sum_{k=1}^m\lcb \lb\ln\BZ\rn_{2,\infty}\ln\BH_\SV\rn_F\rb\lb \ln\BZ\rn_{2,\infty}\ln\BX_{\SU}\rn_F\rb\rcb\\
&\leq \mu c_sr\sigma\ln\BH_\SV\rn_F\ln\BX_{\SU}\rn_F,
\end{align*}
where in the last line, we utilize $\ln\BZ\rn_{2,\infty}^2\leq \mu c_sr\sigma/n$. Similar upper bounds can be established for the other three terms in \eqref{eq:upper_i4}. That is,
\begin{align*}
&\lab \la p^{-1}\G\P_\Omega\G^*(\BZ_\SU\BH_\SV^*),\BZ_\SU\BX_\SV^*\ra\rab\leq \mu c_s r\sigma  \ln\BH_\SV\rn_F\ln\BX_\SV\rn_F,\\
&\lab \la p^{-1}\G\P_\Omega\G^*(\BH_\SU\BY_\SV^*),\BX_\SU\BZ_\SV^*\ra\rab\leq \mu c_sr\sigma \ln\BH_\SU\rn_F\ln\BX_\SU\rn_F,\\
&\lab \la p^{-1}\G\P_\Omega\G^*(\BH_\SU\BY_\SV^*),\BZ_\SU\BX_\SV^*\ra\rab\leq \mu c_sr\sigma \ln\BH_\SU\rn_F\ln\BX_\SV\rn_F.
\end{align*}
Combining these four upper bounds together yields
\begin{align*}
I_4&\leq (\mu c_sr\sigma)^2 \lb \ln\BH_\SV\rn_F\ln\BX_{\SU}\rn_F+\ln\BH_\SV\rn_F\ln\BX_\SV\rn_F+\ln\BH_\SU\rn_F\ln\BX_\SU\rn_F+\ln\BH_\SU\rn_F\ln\BX_\SV\rn_F\rb^2\\
&=(\mu c_sr\sigma)^2\lb \ln\BH_\SU\rn_F+\ln\BH_\SV\rn_F\rb^2\lb \ln\BX_\SU\rn_F+\ln\BX_\SV\rn_F\rb^2\\
&\leq 4(\mu c_s r\sigma)^2\ln\BH\rn_F^2
\end{align*}
where in the last line we have used  the fact $\ln\BX_\SU\rn_F^2+\ln\BX_\SV\rn_F^2=1$.

\textbf{Upper bound for $\ln\nabla f(\BZ)\rn_F^2$}. Substituting the upper bounds for $I_3$ and $I_4$ into \eqref{eq:upper_f} give the upper bound for $\ln\nabla f(\BZ)\rn_F^2$,
\begin{align*}
\ln\nabla f(\BZ)\rn_F^2\leq 8\lb(1+\sqrt{3}\varepsilon_0)^4\sigma_1^2(\G\by)+(\mu c_sr\sigma)^2\rb\ln\BH\rn_F^2.\end{align*}

\textbf{Upper bound for $\ln\nabla F(\BZ)\rn_F^2$}. Noting $\lambda=1/4$, $\sigma\leq (1+\varepsilon_0)\sigma_1(\G\by)/(1-\varepsilon_0)$, and $\varepsilon_0\leq 1/11$, after substituting the upper bounds for $\ln\nabla f(\BZ)\rn_F^2$ and $\ln\nabla g(\BZ)\rn_F^2$ into \eqref{eq:upper_F}, we get
\begin{align*}
\ln\nabla F(\BZ)\rn_F^2&\leq 16\lb(1+\sqrt{3}\varepsilon_0)^4\sigma_1^2(\G\by)+(\mu c_sr\sigma)^2+\frac{60}{128}\sigma_1^2(\G\by)\rb\ln\BH\rn_F^2+\frac{1}{2}\sigma_1(\G\by)\ln\BY^*\BD\BH\rn_F^2\\
&\leq 60(\mu c_s r)^2 \sigma_1^2(\G\by)\ln\BH\rn_F^2+\frac{1}{2}\sigma_1(\G\by)\ln\BY^*\BD\BH\rn_F^2,
\end{align*}
which completes the proof of \eqref{eq:local_smooth}.
\section{Discussion}\label{sec:discuss}
We have proposed a novel algorithm for spectral compressed sensing by applying projected gradient descent updates to a non-convex functional. Exact recovery guarantee has been established, showing that $O(r^2\log(n))$ random observations are sufficient for the algorithm to achieve the successful recovery.  Additionally, empirical evaluation shows that our algorithm is competitive with other state-of-the-art algorithms. In particular, our algorithm is superior to FIHT, a non-convex algorithm for spectral compressed sensing with provable  recovery guarantees, in terms of phase transitions when the number of observations is small.

For future work,  recovery stability of the proposed algorithm to additive noise will be investigated.  The proofs presented in this paper should extend easily to bounded noise with a small magnitude. It remains to address whether  or not our  algorithm can achieve some statistically optimal rates   under a stochastic noise model.

Recently, a line of research work has been devoted to the geometric analysis of non-convex optimization problems including dictionary learning \cite{juge}, phase retrieval \cite{juge02}, low rank matrix sensing and matrix completion \cite{BNS2016Globa,park_saddle,ge_matrix,ge_matrix2}, tensor completion \cite{ge_tensor} and robust PCA \cite{ge_matrix2}. It has been shown that the non-convex functionals for those problems have well-behaved landscape: all local minima  are also globally optimal. Preliminary numerical results show that our projected gradient descent algorithm works equally well with random initialization, which suggests  the geometric landscape of the objective function $F(\BZ)$ introduced in this paper may  be similarly well-behaved. 

\appendix
\section{Supplementary  Lemmas}
Here we list three technical lemmas from the literature that have been used in the analysis of PGD.
\begin{lemma}[{\cite{FIHT}, Lemma 2}]\label{lem:A1}
Assume $\G{\by}$ is $\mu_0$-incoherent and let $\BL_0=\T_r\G(p^{-1}\P_{\Omega}{(\by)})$. Then,
$$
\|\BL_0-\G\by\|_2\lesssim\sqrt{\frac{\mu_0c_sr\log(n)}{m}}\|\G{\by}\|_2
$$
holds with probability at least $1-n^{-2}$.\end{lemma}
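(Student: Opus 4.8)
The idea is to combine the fact that $\G\by$ already has rank $r$ with a matrix Bernstein bound on the sampling fluctuation. First I would pass from $\BL_0=\T_r(p^{-1}\G\P_\Omega(\by))$ to the unthresholded error: since $\BL_0$ is a best rank-$r$ approximation of $p^{-1}\G\P_\Omega(\by)$ and $\G\by$ is itself rank $r$, Weyl's inequality gives $\sigma_{r+1}(p^{-1}\G\P_\Omega(\by))\le\ln p^{-1}\G\P_\Omega(\by)-\G\by\rn_2$, hence
\begin{align*}
\ln\BL_0-\G\by\rn_2\le\ln\BL_0-p^{-1}\G\P_\Omega(\by)\rn_2+\ln p^{-1}\G\P_\Omega(\by)-\G\by\rn_2\le 2\ln p^{-1}\G\P_\Omega(\by)-\G\by\rn_2,
\end{align*}
so it suffices to control $\ln p^{-1}\G\P_\Omega(\by)-\G\by\rn_2$ in operator norm.

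Next I would write this fluctuation as a sum of i.i.d.\ mean-zero random matrices. Put $\BG_a:=\G\be_a$, the matrix with the $a$-th skew-diagonal equal to $1/\sqrt{w_a}$ and all other entries $0$; using the sampling model \eqref{eq:tmp110} and $\G\by=\sum_a y_a\BG_a$ we get $p^{-1}\G\P_\Omega(\by)-\G\by=\sum_{k=1}^m\BX_k$ with $\BX_k:=p^{-1}y_{a_k}\BG_{a_k}-m^{-1}\G\by$, and $\mean{\BX_k}=0$ since $\mean{p^{-1}y_{a_k}\BG_{a_k}}=\frac{1}{np}\sum_a y_a\BG_a=m^{-1}\G\by$. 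I would then invoke the matrix Bernstein inequality (\cite[Theorem~1.6]{Tropp}, exactly as in the proof of Lemma~\ref{lem:key}), which needs an almost-sure operator-norm bound $R$ on each $\BX_k$ and a variance proxy $\sigma^2$ upper bounding $\max\{\ln\sum_k\mean{\BX_k\BX_k^*}\rn_2,\ln\sum_k\mean{\BX_k^*\BX_k}\rn_2\}$.

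The two estimates are where $\mu_0$-incoherence enters. For $R$: $\BG_a$ is a scaled partial permutation matrix so $\ln\BG_a\rn_2=1/\sqrt{w_a}$, while $\by=\D\bx$ gives $|y_a|=\sqrt{w_a}\,|x_a|$, hence $|y_a|\ln\BG_a\rn_2=|x_a|$; writing $a=i+j$ and using $x_{i+j}=[\G\by]^{(i,j)}=\BU^{(i,:)}\BS(\BV^{(j,:)})^*$ with the incoherence bounds on $\ln\BU\rn_{2,\infty},\ln\BV\rn_{2,\infty}$ yields $|x_a|\le\frac{\mu_0c_sr}{n}\sigma_1(\G\by)$, so $\ln\BX_k\rn_2\le p^{-1}|x_{a_k}|+m^{-1}\sigma_1(\G\by)\lesssim\frac{\mu_0c_sr}{m}\sigma_1(\G\by)=:R$. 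For $\sigma^2$: dropping the rank-one correction, $\mean{\BX_k\BX_k^*}\preceq p^{-2}\mean{|y_{a_k}|^2\BG_{a_k}\BG_{a_k}^*}=\frac{1}{np^2}\sum_a|y_a|^2\BG_a\BG_a^*$, and a short computation shows $\sum_a|y_a|^2\BG_a\BG_a^*$ is the diagonal matrix whose $i$-th entry equals $\sum_{j\in[n_2]}|x_{i+j}|^2=\ln(\G\by)^{(i,:)}\rn_2^2\le\frac{\mu_0c_sr}{n}\sigma_1^2(\G\by)$ by incoherence (the analogous column computation handles $\mean{\BX_k^*\BX_k}$). Summing the $m$ terms gives $\sigma^2\lesssim\frac{\mu_0c_sr}{m}\sigma_1^2(\G\by)$.

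Finally I would apply matrix Bernstein with $t\asymp\sqrt{\frac{\mu_0c_sr\log n}{m}}\,\sigma_1(\G\by)$: then $t^2/\sigma^2\gtrsim\log n$ and $t/R\asymp\sqrt{\frac{m\log n}{\mu_0c_sr}}\gtrsim\log n$ as long as $m\gtrsim\mu_0c_sr\log n$ (the regime in which the asserted bound is non-vacuous, and which is comfortably implied by the sample-complexity hypotheses used elsewhere in the paper), so the variance term governs the Bernstein exponent and the failure probability is at most $(n_1+n_2)\,e^{-c\log n}\le n^{-2}$ after adjusting the constant in $t$; combined with the first step this gives $\ln\BL_0-\G\by\rn_2\lesssim\sqrt{\frac{\mu_0c_sr\log n}{m}}\,\ln\G\by\rn_2$. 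I expect the variance bound to be the main obstacle: one has to recognize that $\sum_a|y_a|^2\BG_a\BG_a^*$ is diagonal with entries exactly the squared row (resp.\ column) norms of $\G\by$ — precisely the quantities that $\mu_0$-incoherence controls — and track the $\D$-rescaling carefully so the $w_a$ weights cancel; the reduction and the Bernstein bookkeeping are then routine.
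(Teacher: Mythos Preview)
The paper does not prove this lemma itself; it is quoted verbatim from \cite{FIHT} as a supplementary lemma with no argument given. Your proposal supplies a complete and correct proof, and it is in fact the standard route (and essentially what \cite{FIHT} does): reduce via Weyl/Eckart--Young to controlling $\ln p^{-1}\G\P_\Omega(\by)-\G\by\rn_2$, decompose this as $\sum_k\BX_k$ with mean-zero i.i.d.\ summands, and apply matrix Bernstein. Your computations of the almost-sure bound $R\lesssim\frac{\mu_0c_sr}{m}\sigma_1(\G\by)$ via $|y_a|\ln\BG_a\rn_2=|x_a|$ and incoherence, and of the variance $\sigma^2\lesssim\frac{\mu_0c_sr}{m}\sigma_1^2(\G\by)$ via the observation that $\sum_a|y_a|^2\BG_a\BG_a^*$ is diagonal with $i$-th entry $\ln(\G\by)^{(i,:)}\rn_2^2$, are both correct, and the Bernstein bookkeeping is fine (the regime $m\gtrsim\mu_0c_sr\log n$ indeed makes the variance term dominate). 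There is nothing to correct.
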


\begin{lemma}[{\cite[Lemma~5.4]{PF}}]\label{lem:A2}
For any $\BZ,~\BX\in\C^{(n+1)\times r}$, one has 
\begin{align*}
\distsq{\BZ}{\BX}\leq \frac{1}{2(\sqrt{2}-1)\sigma_r^2(\BX)}\ln\BZ\BZ^*-\BX\BX^*\rn_F^2.
\end{align*} 
\end{lemma}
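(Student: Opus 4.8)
The plan is to reduce to the case where the Procrustes-optimal rotation is the identity, and then deduce the bound from an algebraic identity together with a short case analysis on the size of the perturbation.

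First I would observe that replacing $\BX$ by $\BX\BR$, where $\BR$ is a unitary minimizer in the definition of $\dist{\BZ}{\BX}$, alters neither $\BX\BX^*$, nor the singular values of $\BX$, nor $\dist{\BZ}{\BX}$; moreover, exactly as in \eqref{eq:H_prop}, after this normalization the identity matrix is an optimal rotation and $\BX^*\BZ=\BZ^*\BX\succeq 0$. Set $\BH=\BZ-\BX$, so that $\distsq{\BZ}{\BX}=\ln\BH\rn_F^2$ and, since $\BX^*\BZ$ is Hermitian, $(\BZ+\BX)^*\BH=\BZ^*\BZ-\BX^*\BX$ is Hermitian as well. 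I would then use the identity
\begin{align*}
\BZ\BZ^*-\BX\BX^*=\tfrac12(\BZ+\BX)\BH^*+\tfrac12\BH(\BZ+\BX)^*;
\end{align*}
expanding its squared Frobenius norm and using the Hermiticity of $(\BZ+\BX)^*\BH$ to evaluate the cross term gives
\begin{align*}
\ln\BZ\BZ^*-\BX\BX^*\rn_F^2=\tfrac12\ln(\BZ+\BX)\BH^*\rn_F^2+\tfrac12\ln\BZ^*\BZ-\BX^*\BX\rn_F^2\geq \tfrac12\,\sigma_r^2(\BZ+\BX)\,\ln\BH\rn_F^2,
\end{align*}
where the last inequality follows from $\ln(\BZ+\BX)\BH^*\rn_F^2=\trace((\BZ+\BX)^*(\BZ+\BX)\BH^*\BH)$ together with $(\BZ+\BX)^*(\BZ+\BX)=\BZ^*\BZ+2\BX^*\BZ+\BX^*\BX\succeq\sigma_r^2(\BX)\BI$ (using $\BX^*\BZ\succeq0$).

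Next I would split on the size of $\ln\BH\rn_2$. If $\ln\BH\rn_2\leq(2-\sqrt2)\,\sigma_r(\BX)$, then Weyl's inequality yields $\sigma_r(\BZ+\BX)=\sigma_r(2\BX+\BH)\geq 2\sigma_r(\BX)-\ln\BH\rn_2\geq\sqrt2\,\sigma_r(\BX)$, so the display above already gives $\ln\BZ\BZ^*-\BX\BX^*\rn_F^2\geq\sigma_r^2(\BX)\ln\BH\rn_F^2\geq 2(\sqrt2-1)\,\sigma_r^2(\BX)\ln\BH\rn_F^2$ and we are done. The remaining regime $\ln\BH\rn_2>(2-\sqrt2)\,\sigma_r(\BX)$ is the crux: here $\BZ+\BX$ may be nearly rank-deficient, the crude estimate above is no longer sufficient, and one must instead retain the term $\tfrac12\ln\BZ^*\BZ-\BX^*\BX\rn_F^2$ and the contribution of $\BZ^*\BZ+2\BX^*\BZ$ to $\ln(\BZ+\BX)\BH^*\rn_F^2$, expand $\BZ^*\BZ-\BX^*\BX=\BH^*\BH+2\BX^*\BH$, and control the indefinite cross terms by AM--GM. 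Carrying this through with the threshold $(2-\sqrt2)\,\sigma_r(\BX)$ is exactly what pins down the constant $2(\sqrt2-1)$; indeed the inequality is tight, being saturated when $\BX^*\BZ=0$ with the columns of $\BZ$ orthogonal to those of $\BX$. (Since the statement is quoted verbatim from \cite[Lemma~5.4]{PF}, one may of course also simply invoke it.)
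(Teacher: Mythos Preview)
The paper supplies no proof of this lemma: it is listed in Appendix~A as a supplementary result and simply cited from \cite[Lemma~5.4]{PF}. Your closing parenthetical --- that one may invoke the reference directly --- is therefore exactly what the paper does, and for the purposes of this manuscript that is all that is required.

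Your attempt at a self-contained argument is, however, incomplete. The reduction to $\BX^*\BZ\succeq 0$, the identity
\[
\ln\BZ\BZ^*-\BX\BX^*\rn_F^2=\tfrac12\ln(\BZ+\BX)\BH^*\rn_F^2+\tfrac12\ln\BZ^*\BZ-\BX^*\BX\rn_F^2,
\]
and the lower bound $(\BZ+\BX)^*(\BZ+\BX)\succeq\sigma_r^2(\BX)\BI$ are all correct, and your Case~1 goes through cleanly. But Case~2 is only a sketch: you assert that ``retaining'' the second term and ``controlling the indefinite cross terms by AM--GM'' yields the constant $2(\sqrt2-1)$, yet no actual inequality is written down. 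This is not a triviality. In the tight configuration you yourself identify ($r=1$, $\BX=\be_1$, $\BZ=t\be_2$ with $t^2=\sqrt2-1$), both terms in the identity are needed at full strength, and the crude bound $(\BZ+\BX)^*(\BZ+\BX)\succeq\sigma_r^2(\BX)\BI$ together with the exact second term still falls short of $2(\sqrt2-1)\sigma_r^2(\BX)\ln\BH\rn_F^2$. So some genuinely sharper estimate on $\ln(\BZ+\BX)\BH^*\rn_F^2$ is required in this regime, and you have not supplied it.

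That said, your crude bound already gives $\distsq{\BZ}{\BX}\le 2\sigma_r^{-2}(\BX)\ln\BZ\BZ^*-\BX\BX^*\rn_F^2$ unconditionally, which differs from the stated constant only by a factor $4(\sqrt2-1)\approx 1.66$. For the sole use of this lemma in the paper (the initialization bound in Proposition~\ref{prop:initial_error}), this weaker constant would suffice after a harmless adjustment of absolute constants. So if you wish to avoid citing \cite{PF}, the cleanest fix is to drop the case split entirely, state the inequality with constant $2$ in place of $\tfrac{1}{2(\sqrt2-1)}$, and absorb the slack downstream.
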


\begin{lemma}[{\cite{Chi}, Lemma 3}]\label{lem:A3}
Assume $\G{\by}$ is $\mu_0$-incoherent, and let $T$ be the tangent space of the rank $r$ matrix manifold 
at $\G\by$. Then,
\begin{equation*}
\|\P_{T}\G(\I-p^{-1}\P_{\Omega})\G^*\P_{T}\|_2\leq\sqrt{\frac{32\mu_0c_sr\log(n)}{m}}
\end{equation*}
holds with probability at least $1-n^{-2}$.
\end{lemma}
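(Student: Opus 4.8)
The plan is to write $\mathcal{A}:=\P_T\G(\I-p^{-1}\P_\Omega)\G^*\P_T$ as a sum of $m$ i.i.d.\ mean-zero self-adjoint operators on $T$ and invoke the operator Bernstein inequality, exactly in the spirit of the proof of Lemma~\ref{lem:key}. Let $\BG_a$ be the $n_1\times n_2$ matrix whose $a$-th skew-diagonal entries equal $1/\sqrt{w_a}$ and whose other entries vanish; then $\G\be_a=\BG_a$, $\G^*\BM=\lcb\la\BG_a,\BM\ra\rcb_{a=0}^{n-1}$, the family $\{\BG_a\}_{a=0}^{n-1}$ is orthonormal, and $\G\G^*=\sum_a\la\BG_a,\cdot\ra\BG_a$ is the orthogonal projection onto the Hankel matrices. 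Under the with-replacement model $p^{-1}\P_\Omega=\frac{n}{m}\sum_{k=1}^m\be_{a_k}\be_{a_k}^*$, so
\begin{align*}
\mathcal{A}=\sum_{k=1}^m\mathcal{S}_k,\qquad \mathcal{S}_k:=\frac{1}{m}\P_T\G\G^*\P_T-\frac{n}{m}\,\la\P_T\BG_{a_k},\cdot\ra\,\P_T\BG_{a_k};
\end{align*}
since $\frac{1}{n}\sum_a\la\P_T\BG_a,\cdot\ra\P_T\BG_a=\frac{1}{n}\P_T\G\G^*\P_T$, each $\mathcal{S}_k$ has mean zero, and the $\mathcal{S}_k$ are i.i.d.

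The crux is the incoherence estimate $\ln\P_T\BG_a\rn_F^2\le 2\mu_0c_sr/n$ for every $a$. Writing $\H\be_a=\sum_{i+j=a}\be_i\be_j^T$ and using that the indices $i$ (resp.\ $j$) occurring in this sum are pairwise distinct, $\ln\BU^*\H\be_a\rn_F^2=\sum_{i+j=a}\ln\BU^{(i,:)}\rn_2^2\le w_a\cdot\mu_0c_sr/n$ by the incoherence of $\BU$; hence $\ln\BU^*\BG_a\rn_F^2=w_a^{-1}\ln\BU^*\H\be_a\rn_F^2\le\mu_0c_sr/n$, and likewise $\ln\BG_a\BV\rn_F^2\le\mu_0c_sr/n$. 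Combining with the elementary bound $\ln\P_T\BX\rn_F^2\le\ln\BU^*\BX\rn_F^2+\ln\BX\BV\rn_F^2$ gives the estimate. From it, $\ln\mathcal{S}_k\rn_2\le\frac{1}{m}\ln\G\G^*\rn_2+\frac{n}{m}\ln\P_T\BG_{a_k}\rn_F^2\le\frac{3\mu_0c_sr}{m}=:L$ (using $\mu_0c_sr\ge1$); and since $\lb\la\P_T\BG_a,\cdot\ra\P_T\BG_a\rb^2=\ln\P_T\BG_a\rn_F^2\,\la\P_T\BG_a,\cdot\ra\P_T\BG_a\preceq\frac{2\mu_0c_sr}{n}\la\P_T\BG_a,\cdot\ra\P_T\BG_a$, taking expectations yields $\mean{\mathcal{S}_k^2}\preceq\frac{2\mu_0c_sr}{m^2}\P_T\G\G^*\P_T\preceq\frac{2\mu_0c_sr}{m^2}\P_T$, so $\ln\sum_k\mean{\mathcal{S}_k^2}\rn_2\le\frac{2\mu_0c_sr}{m}=:\sigma^2$.

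Applying the operator Bernstein inequality \cite[Theorem~1.6]{Tropp} on the space $T$, whose dimension $r(n_1+n_2-r)$ is $O(nr)$ and so has logarithm $O(\log n)$, gives $\prob{\ln\mathcal{A}\rn_2>t}\le 2\dim(T)\exp\!\lb-\tfrac{t^2/2}{\sigma^2+Lt/3}\rb$. Taking $t=\sqrt{32\mu_0c_sr\log(n)/m}$ and noting that in the only regime where the claimed bound is non-trivial, namely $m\gtrsim\mu_0c_sr\log(n)$, one has $Lt\lesssim\sigma^2$ (so the variance term dominates) while $t^2/\sigma^2=16\log n$ dwarfs $\log(2\dim T)$, the right-hand side is at most $n^{-2}$; this proves the lemma. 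The step I expect to be the real obstacle is the incoherence bound $\ln\P_T\BG_a\rn_F^2\le 2\mu_0c_sr/n$: it is where the orthonormalized Hankel sampling basis $\{\BG_a\}$, with its skew-diagonal support of size $w_a$, interacts with the row-norm control of $\BU$ and $\BV$, and it is precisely the reason the relevant incoherence is that of $\G\by$ rather than of $\H\bx$ itself; the rest is bookkeeping within the same operator-Bernstein template already used for Lemma~\ref{lem:key}.
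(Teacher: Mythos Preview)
The paper does not actually prove Lemma~\ref{lem:A3}; it is listed in the appendix as a supplementary result quoted from \cite{Chi}, so there is no in-paper proof to compare against. Your argument is correct and is essentially the standard proof from \cite{Chi}: express the operator as an i.i.d.\ sum of rank-one perturbations indexed by the sampled skew-diagonals, establish the key incoherence bound $\ln\P_T\BG_a\rn_F^2\le 2\mu_0c_sr/n$ from the row-norm control of $\BU,\BV$, and apply the matrix Bernstein inequality with $L\asymp\mu_0c_sr/m$ and $\sigma^2\asymp\mu_0c_sr/m$. The constants you obtain match the stated $\sqrt{32\mu_0c_sr\log(n)/m}$ and failure probability $n^{-2}$ (once one notes $\dim T\le r(n+1)$), and your identification of the incoherence estimate for $\P_T\BG_a$ as the substantive step is exactly right.
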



\bibliographystyle{siam}
\bibliography{references}

\begin{thebibliography}{10}

\bibitem{BNS2016Globa}
{\sc S.~Bhojanapalli, B.~Neyshabur, and N.~Srebro}, {\em Global optimality of
  local search for low rank matrix recovery}, ar{X}iv:1605.07221,  (2016).

\bibitem{CGIHT}
{\sc J.~Blanchard, J.~Tanner, and K.~Wei}, {\em {CGIHT}: {C}onjugate gradient
  iterative hard thresholding for compressed sensing and matrix completion},
  Information and Inference, 4 (2015), pp.~289--327.

\bibitem{CGIHTnoise}
\leavevmode\vrule height 2pt depth -1.6pt width 23pt, {\em Conjugate gradient
  iterative hard thresholding: {O}bserved noise stability for compressed
  sensing}, IEEE Transactions on Signal Processing, 63 (2015), pp.~528--537.

\bibitem{bludav2009iht}
{\sc T.~Blumensath and M.~E. Davies}, {\em Iterative hard thresholding for
  compressed sensing}, Applied and Computational Harmonic Analysis, 27(3)
  (2009), pp.~265--274.

\bibitem{blumensathdavies2010niht}
\leavevmode\vrule height 2pt depth -1.6pt width 23pt, {\em Normalized iterative
  hard thresholding: {G}uaranteed stability and performance}, IEEE Journal of
  Selected Topics in Signal Processing, 4(2) (2010), pp.~298--309.

\bibitem{PWGD}
{\sc J.-F. Cai, S.~Liu, and W.~Xu}, {\em A fast algorithm for reconstruction of
  spectrally sparse signals in super-resolution}, in SPIE Optical Engineering+
  Applications, International Society for Optics and Photonics, 2015,
  pp.~95970A--95970A--7.

\bibitem{CQXY:ACHA:16}
{\sc J.-F. Cai, X.~Qu, W.~Xu, and G.-B. Ye}, {\em Robust recovery of complex
  exponential signals from random {G}aussian projections via low rank {H}ankel
  matrix reconstruction}, Applied and Computational Harmonic Analysis, 41
  (2016), pp.~470--490.

\bibitem{FIHT}
{\sc J.-F. Cai, T.~Wang, and K.~Wei}, {\em Fast and provable algorithms for
  spectrally sparse signal reconstruction via low-rank {H}ankel matrix
  completion}, Applied and Computational Harmonic Analysis (to appear),
  (2017).

\bibitem{candes2015phase}
{\sc E.~J. Candes, X.~Li, and M.~Soltanolkotabi}, {\em Phase retrieval via
  {W}irtinger flow: {T}heory and algorithms}, IEEE Transactions on Information
  Theory, 61 (2015), pp.~1985--2007.

\bibitem{candesrecht2009mc}
{\sc E.~J. Cand\`es and B.~Recht}, {\em Exact matrix completion via convex
  optimization}, Foundations of Computational Mathematics, 9(6) (2009),
  pp.~717--772.

\bibitem{CS}
{\sc E.~J. Cand{\`e}s, J.~Romberg, and T.~Tao}, {\em Robust uncertainty
  principles: {E}xact signal reconstruction from highly incomplete frequency
  information}, IEEE Transactions on Information Theory, 52 (2006),
  pp.~489--509.

\bibitem{CDS98}
{\sc S.~S. Chen, D.~L. Donoho, and M.~A. Saunders}, {\em Atomic decomposition
  by basis pursuit}, SIAM Review, 43 (2001), pp.~129--159.

\bibitem{candes2017phase}
{\sc Y.~Chen and E.~J. Candes}, {\em Solving random quadratic systems of
  equations is nearly as easy as solving linear systems}, Communications on
  Pure and Applied Mathematics, 70 (2017), pp.~822--883.

\bibitem{Chi}
{\sc Y.~Chen and Y.~Chi}, {\em Robust spectral compressed sensing via
  structured matrix completion}, IEEE Transactions on Information Theory, 60
  (2014), pp.~6576--6601.

\bibitem{yudong2}
{\sc Y.~Chen and M.~J. Wainwright}, {\em Fast low-rank estimation by projected
  gradient descent: General statistical and algorithmic guarantees},
  ar{X}iv:1509.03025,  (2015).

\bibitem{Mismatch}
{\sc Y.~Chi, L.~L. Scharf, A.~Pezeshki, and A.~R. Calderbank}, {\em Sensitivity
  to basis mismatch in compressed sensing}, IEEE Transactions on Signal
  Processing, 59 (2011), pp.~2182--2195.

\bibitem{SubspacePursuit}
{\sc W.~Dai and O.~Milenkovic}, {\em Subspace pursuit for compressive sensing
  signal reconstruction}, IEEE Transactions on Information Theory, 55 (2009),
  pp.~2230--2249.

\bibitem{donoho2006cs}
{\sc D.~L. Donoho}, {\em Compressed sensing}, IEEE Transactions on Information
  Theory, 52(4) (2006), pp.~1289--1306.

\bibitem{Hankel_Fazel}
{\sc M.~Fazel, T.~K. Pong, D.~Sun, and P.~Tseng}, {\em Hankel matrix rank
  minimization with applications in system identification and realization},
  SIAM Journal on Matrix Analysis and Applications, 34 (2013), pp.~946--977.

\bibitem{Geig_kw}
{\sc U.~Feige and E.~Ofek}, {\em Spectral techniques applied to sparse random
  graphs}, Random Structures and Algorithms, 27 (2005), pp.~251--275.

\bibitem{foucart2011htp}
{\sc S.~Foucart}, {\em Hard thresholding pursuit: {A}n algorithm for
  compressive sensing}, SIAM Journal on Numerical Analysis, 49 (2011),
  pp.~2543--2563.

\bibitem{ge_tensor}
{\sc R.~Ge, F.~Huang, C.~Jin, and Y.~Yuan}, {\em Escaping from saddle points -
  online stochastic gradient for tensor decomposition}, ar{X}iv:1503.02101,
  (2015).

\bibitem{ge_matrix2}
{\sc R.~Ge, C.~Jin, and Y.~Zhang}, {\em No spurious local minima in nonconvex
  low rank problems: {A} unified geometric analysis}, ar{X}iv:1704.00708,
  (2017).

\bibitem{ge_matrix}
{\sc R.~Ge, J.~D. Lee, and T.~Ma}, {\em Matrix completion has no spurious local
  minimum}, NIPS,  (2016), pp.~2973--2981.

\bibitem{CVX}
{\sc M.~Grant and S.~Boyd}, {\em {CVX}: Matlab software for disciplined convex
  programming, version 2.1}.
\newblock \url{http://cvxr.com/cvx}, Mar. 2014.

\bibitem{Strohmer_myad}
{\sc M.~Herman and T.~Strohmer}, {\em General deviants: An analysis of
  perturbations in compressed sensing}, IEEE Journal of Selected Topics in
  Signal Processing: Special Issue on Compressive Sensing, 4 (2010),
  pp.~342--349.

\bibitem{keshavan2010matrix}
{\sc R.~H. Keshavan, A.~Montanari, and S.~Oh}, {\em Matrix completion from a
  few entries}, IEEE Transactions on Information Theory, 56 (2010),
  pp.~2980--2998.

\bibitem{PROPACK}
{\sc R.~Larsen}, {\em {PROPACK} - software for large and sparse {SVD}
  calculations, version 2.1}.
\newblock \url{http://sun.stanford.edu/~rmunk/PROPACK/}, Apr. 2005.

\bibitem{sykw}
{\sc X.~Li, S.~Ling, T.~Strohmer, and K.~Wei}, {\em Rapid, robust, and reliable
  blind deconvolution via nonconvex optimization}, ar{X}iv:1606.04933,  (2016).

\bibitem{MUSIC2D}
{\sc W.~Liao}, {\em {MUSIC} for multidimensional spectral estimation:
  {S}tability and super-resolution}, IEEE Transactions on Signal Processing, 63
  (2015), pp.~6395--6406.

\bibitem{MUSIC}
{\sc W.~Liao and A.~Fannjiang}, {\em {MUSIC} for single-snapshot spectral
  estimation: {S}tability and super-resolution}, Applied and Computational
  Harmonic Analysis, 40 (2016), pp.~33--67.

\bibitem{MRI}
{\sc M.~Lustig, D.~Donoho, and J.~M. Pauly}, {\em Sparse {MRI}: The application
  of compressed sensing for rapid {MR} imaging}, Magnetic Resonance in
  Medicine, 58 (2007), pp.~1182--1195.

\bibitem{von_trace}
{\sc L.~Mirsky}, {\em A trace inequality of {John von Neumann}}, Monatshefte
  für Mathematik, 79 (1975), pp.~303--306.

\bibitem{NeTr_cosamp}
{\sc D.~Needell and J.~Tropp}, {\em {C}o{S}a{MP}: Iterative signal recovery
  from incomplete and inaccurate samples}, Applied and Computational Harmonic
  Analysis, 26 (2009), pp.~301--321.

\bibitem{park_saddle}
{\sc D.~Park, A.~Kyrillidis, C.~Caramanis, and S.~Sanghavi}, {\em Non-square
  matrix sensing without spurious local minima via the {B}urer-{M}onteiro
  approach}, ar{X}iv:1609.03240,  (2016).

\bibitem{Radar}
{\sc L.~C. Potter, E.~Ertin, J.~T. Parker, and M.~Cetin}, {\em Sparsity and
  compressed sensing in radar imaging}, Proceedings of the IEEE, 98 (2010),
  pp.~1006--1020.

\bibitem{QMCCO:ACIE:15}
{\sc X.~Qu, M.~Mayzel, J.-F. Cai, Z.~Chen, and V.~Orekhov}, {\em Accelerated
  {NMR} spectroscopy with low-rank reconstruction}, Angewandte Chemie
  International Edition, 54 (2015), pp.~852--854.

\bibitem{Microscopy}
{\sc L.~Schermelleh, R.~Heintzmann, and H.~Leonhardt}, {\em A guide to
  super-resolution fluorescence microscopy}, The Journal of Cell Biology, 190
  (2010), pp.~165--175.

\bibitem{juge}
{\sc J.~Sun, Q.~Qu, and J.~Wright}, {\em Complete dictionary recovery over the
  sphere {I}: {O}verview and the geometric picture}, IEEE Transactions on
  Information Theory, 63 (2017), pp.~853--884.

\bibitem{juge02}
\leavevmode\vrule height 2pt depth -1.6pt width 23pt, {\em A geometrical
  analysis of phase retrieval}, Foundations of Computational Mathematics (to
  appear),  (2017).

\bibitem{Tang}
{\sc G.~Tang, B.~N. Bhaskar, P.~Shah, and B.~Recht}, {\em Compressed sensing
  off the grid}, IEEE Transactions on Information Theory, 59 (2013),
  pp.~7465--7490.

\bibitem{Tropp}
{\sc J.~A. Tropp}, {\em User-friendly tail bounds for sums of random matrices},
  Foundations of Computational Mathematics, 12 (2012), pp.~389--434.

\bibitem{Analog}
{\sc J.~A. Tropp, J.~N. Laska, M.~F. Duarte, J.~K. Romberg, and R.~G.
  Baraniuk}, {\em Beyond {N}yquist: {E}fficient sampling of sparse bandlimited
  signals}, IEEE Transactions on Information Theory, 56 (2010), pp.~520--544.

\bibitem{PF}
{\sc S.~Tu, R.~Boczar, M.~Simchowitz, M.~Soltanolkotabi, and B.~Recht}, {\em
  Low-rank solutions of linear matrix equations via {P}rocrustes flow},
  ar{X}iv:1507.03566,  (2015).

\bibitem{Completion}
{\sc K.~Wei, J.~F. Cai, T.~F. Chan, and S.~Leung}, {\em Guarantees of
  {R}iemannian optimization for low rank matrix completion},
  ar{X}iv:1603.06610,  (2016).

\bibitem{Recovery}
\leavevmode\vrule height 2pt depth -1.6pt width 23pt, {\em Guarantees of
  {R}iemannian optimization for low rank matrix recovery}, SIAM Journal on
  Matrix Analysis and Applications, 37 (2016), pp.~1198--1222.

\bibitem{yudong1}
{\sc X.~Yi, D.~Park, Y.~Chen, and C.~Caramanis}, {\em Fast algorithms for
  robust {PCA} via gradient descent}, ar{X}iv:1605.07784,  (2016).

\bibitem{Hankel_Tensor}
{\sc J.~Ying, H.~Lu, Q.~Wei, J.-F. Cai, D.~Guo, J.~Wu, Z.~Chen, and X.~Qu},
  {\em Hankel matrix nuclear norm regularized tensor completion for
  n-dimensional exponential signals}, IEEE Transactions on Signal Processing,
  65 (2017), pp.~3702--3717.

\bibitem{FGD2}
{\sc Q.~Zheng and J.~Lafferty}, {\em A convergent gradient descent algorithm
  for rank minimization and semidefinite programming from random linear
  measurements}, ar{X}iv:1506.06081,  (2015).

\bibitem{FGD}
\leavevmode\vrule height 2pt depth -1.6pt width 23pt, {\em Convergence analysis
  for rectangular matrix completion using {B}urer-{M}onteiro factorization and
  gradient descent}, ar{X}iv:1605.07051,  (2016).

\end{thebibliography}



\end{document}